\documentclass[11pt]{amsart}
\usepackage[utf8]{inputenc}
\usepackage[english]{babel}
\usepackage{amsmath}
\usepackage{amsthm}
\usepackage{amssymb}
\usepackage{mathtools}
\usepackage{tikz-cd}
\usepackage[T1]{fontenc}
\usepackage{wasysym}
\usepackage{dsfont}
\usepackage[many]{tcolorbox}
\usepackage{soul}
\usepackage{xcolor}
\usepackage{appendix}
\usepackage{enumerate}
\usepackage{multicol}
\usepackage{fancyhdr}
\usepackage{parskip}
\usepackage{hyperref}
\usepackage{mathtools}
\usepackage{nameref}
\usepackage{thmtools}
\usepackage{thm-restate}
\usepackage{adjustbox}
\usepackage{cleveref}
\usepackage{tikz}
\usepackage{float}
\usepackage{MnSymbol}
\usepackage{amsthm}
\usepackage{orcidlink}
\usetikzlibrary{graphs.standard} 

\hypersetup{
  colorlinks   = true, 
  urlcolor     = {blue!50!black}, 
  linkcolor    = {blue!50!black}, 
  citecolor   = {red!50!black} 
}


\newcommand{\ar}{\mathrm{ar}}


\renewcommand{\L}{\mathcal{L}}


\newcommand{\obj}{\mathrm{obj}}
\newcommand{\morph}{\mathrm{morph}}
\newcommand{\Gra}{\mathbf{Gra}}
\newcommand{\Str}{\mathbf{Str}}
\newcommand{\Pos}{\mathbf{Pos}}
\newcommand{\StrL}{\Str(\L)}
\newcommand{\Sym}{\mathbf{Sym}}
\newcommand{\W}{\mathbf{W}}
\newcommand{\Syst}{\mathbf{Syst}}

\newcommand{\Hom}{\mathrm{Hom}}
\newcommand{\Gdg}{\mathbf{Gdg}}
\newcommand{\GdgL}{\Gdg(\L)}
\newcommand{\End}{\mathrm{End}}
\newcommand{\Alg}{\mathbf{Alg}}

\newcommand{\A}{\mathbb{A}}

\newcommand{\cf}{\mathrm{cf}}
\newcommand{\Ord}{\mathbf{Ord}}
\newcommand{\ZFC}{\mathsf{ZFC}}
\newcommand{\Card}{\mathbf{Card}}

\newcommand{\iso}{\simeq}
\newcommand{\C}{\mathcal{C}}

\newtheorem{theorem}{Theorem}[section]
\newtheorem*{theorem*}{Theorem}
\newtheorem{lemma}[theorem]{Lemma}
\newtheorem{proposition}[theorem]{Proposition}
\newtheorem*{proposition*}{Proposition}
\newtheorem{corollary}[theorem]{Corollary}
\newtheorem*{corollary*}{Corollary}
\newtheorem*{fact*}{Fact}
\newtheorem{claim}{Claim}[theorem]
\newenvironment{claimproof}[1]{\par\noindent\underline{Proof:}\space#1}{\hfill $\blacksquare$}
\newtheorem{fact}[theorem]{Fact}

\theoremstyle{definition}
\newtheorem*{definition*}{Definition}
\newtheorem{definition}[theorem]{Definition}

\newtheorem{question}[theorem]{Question}
\newtheorem*{question*}{Question}

\newtheorem*{notation*}{Notation}

\theoremstyle{remark}




\newcommand{\Ind}[1]
{#1\setbox0=\hbox{$#1x$}\kern\wd0\hbox to 0pt{\hss$#1\mid$\hss} \lower.9\ht0\hbox to 0pt{\hss$#1\smile$\hss}\kern\wd0}

\newcommand{\notind}[1]
{#1\setbox0=\hbox{$#1x$}\kern\wd0
\hbox to 0pt{\mathchardef\nn=12854\hss$#1\nn$\kern1.4\wd0\hss}
\hbox to 0pt{\hss$#1\mid$\hss}\lower.9\ht0 \hbox to 0pt{\hss$#1\smile$\hss}\kern\wd0}


\newcommand{\Gaif}{\mathsf{Gaif}}
\newcommand{\Arc}{\mathsf{Arc}}


\newcommand{\Cfrak}{\ensuremath{\mathfrak{C}}}
\newcommand{\Dfrak}{\ensuremath{\mathfrak{D}}}

\newcommand{\Gfrak}{\ensuremath{\mathfrak{G}}}

\newcommand{\Acal}{\ensuremath{\mathcal{A}}}

\newcommand{\Hcal}{\ensuremath{\mathcal{H}}}
\newcommand{\Ical}{\ensuremath{\mathcal{I}}} 
 
\newcommand{\Kcal}{\ensuremath{\mathcal{K}}} 

\newcommand{\Mcal}{\ensuremath{\mathcal{M}}}

\newcommand{\Rcal}{\ensuremath{\mathcal{R}}}
\newcommand{\Scal}{\ensuremath{\mathcal{S}}}

\newcommand{\Vcal}{\ensuremath{\mathcal{V}}}




\title{Algebraically universal categories of relational structures}

\author[I. Eleftheriadis]{Ioannis {Eleftheriadis}\ \orcidlink{0000-0003-4764-8894}}
\address{Ioannis {Eleftheriadis}, Department of Computer Science and Technology, University of Cambridge, UK}
\email{\href{ie257@cam.ac.uk}{ie257@cam.ac.uk}}

\thanks{Supported by a George and Marrie Vergottis Scholarship awarded through Cambridge Trust, an Onassis Foundation Scholarship, and a Robert Sansom Studentship.}

\subjclass[2020]{18B15, 08C05, 03C98, 05C62}


\begin{document}
\maketitle

\begin{abstract}
    We consider categories of relational structures that fully embed every category of universal algebras, and prove a partial characterisation of these in terms of an infinitary variant of the notion of nowhere density of Nešetřil and Ossona de Mendez. More precisely, we show that the Gaifman class of an algebraically universal category contains subdivided complete graphs of any infinite size, and establish that any monotone category satisfying this may be oriented to obtain an algebraically universal category. For the proof of the above, we also develop a categorical framework for relational gadget constructions. This generalises known results about categories of finite graphs to categories of relational structures of unbounded size. 
\end{abstract}

\section{Introduction}

In his seminal book on graph theory \cite{konig}, K{\"o}nig first proposed the problem of whether it is possible to represent a given abstract group as the automorphism group of some graph. This was answered affirmatively for finite groups by Frucht \cite{frucht}, and since then, there has been a series of results regarding the representation of groups in various finite or infinite structures \cite{birkhoff}, \cite{groot}, \cite{sabidussi}. These culminated in the work of Isbell \cite{isbell}, who proposed full embeddings as the means of extending these representation results to a general setting, and lead to the study of \emph{algebraically universal categories}, i.e.\ those categories that fully embed all categories of universal algebras.

Algebraically universal categories were extensively studied by the Prague school of category theory in the 1960s, leading to a number of important results summarised in \cite{pultr}. Amongst others, it was shown that algebraically universal categories fully embed all small categories \cite{hedrlin1966full}, i.e.\ those categories whose morphism class is a set, and so in particular any monoid can be realised as the endomorphism monoid of an object in an algebraically universal category. Moreover, it was established that the category $\Gra$ of all graphs with homomorphisms is algebraically universal \cite{pultr1964concerning}. In turn, this motivated investigation aiming to identify those full subcategories of $\Gra$ that are algebraically universal. 

More recently, a partial characterisation of these categories of graphs was established by Nešetřil and Ossona de Mendez in the framework of finite set theory \cite{nesetrilossona}. Surprisingly, this connects algebraic universality with a graph sparsity notion known as \emph{nowhere density}. Nowhere density was introduced by the same two authors \cite{nowhere}, \cite{NOdM12} as structural property of classes of finite graphs that generalises numerous well-behaved classes, including graphs of bounded degree, planar graphs, graphs excluding a fixed minor and graphs of bounded expansion. 

An important fact used in this characterisation of algebraic universality is that a monotone class of finite graphs is nowhere dense if, and only if, it is stable in the sense of model theory. This connection between stability and combinatorial sparsity was established in the context of infinite graphs by Podewski and Ziegler \cite{podewskiziegler}, and extended to classes of finite graphs by Adler and Adler \cite{AdlerAdler2014}. Answering a question of Adler and Adler, this picture was further extended to classes of arbitrary relational structures via their Gaifman graphs by Braunfeld, Dawar, Papadopoulos, and the author \cite{monotoneNIP}. 

Motivated by all the above, this paper generalises the characterisation of algebraically universal categories given by Nešetřil and Ossona de Mendez to categories of relational structures with homomorphisms, without the finiteness assumption present in \cite{nesetrilossona}. This is given in terms of an infinitary variant of nowhere density, and a generalisation of the arguments in \cite{monotoneNIP}. Our contribution can be summarized by the following theorem.

\begin{theorem}\label{th:maintheorem}
    Let $\Cfrak$ be an algebraically universal category of relational structures. Then $\Gaif(\Cfrak)$ is totally somewhere dense. Moreover, if $\Cfrak$ is a monotone category of relational structures such that $\Gaif(\Cfrak)$ is totally somewhere dense, then there is a full orientation $\tilde \Cfrak$ of $\Cfrak$ such that $\tilde{\Cfrak}$ is algebraically universal.
\end{theorem}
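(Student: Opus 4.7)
My plan is to handle the two implications separately, following the strategy signalled in the abstract.

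For the necessity direction, I would argue by contrapositive: suppose $\Gaif(\Cfrak)$ is not totally somewhere dense, i.e.\ for some infinite cardinal $\kappa$ and some radius $r < \omega$ there is no structure in $\Cfrak$ whose Gaifman graph contains an $r$-subdivision of $K_\kappa$. The aim is to extract from this a uniform ``tameness'' bound that is incompatible with algebraic universality. The tool I would use is the infinite generalisation, to arbitrary relational structures, of the Podewski--Ziegler equivalence between stability and sparsity, whose monotone form is the main theorem of \cite{monotoneNIP}. Taking the monotone closure (or more categorically, passing to a downwards-closed full subcategory of $\Gaif(\Cfrak)$) converts the failure of infinitary subdivision containment into a stability statement. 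On the other hand, algebraic universality forces $\Cfrak$ to host endomorphism monoids of arbitrarily large free algebras, and therefore encodes linear orders of unbounded length inside some object; this produces an order-type witness contradicting the stability bound. The necessity direction is then a matter of phrasing these facts categorically, since the relevant functors (Gaifman, endomorphism monoid) respect full embeddings.

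For the sufficiency direction, since $\Gra$ is itself algebraically universal by \cite{pultr1964concerning}, it suffices to full-embed $\Gra$ into a full orientation $\tilde{\Cfrak}$. Here I would invoke the categorical gadget framework developed earlier in the paper. The totally somewhere dense hypothesis supplies, for each infinite $\kappa$ and each $r$, a structure $A_{\kappa,r} \in \Cfrak$ whose Gaifman graph contains an $r$-subdivision of $K_\kappa$. From $A_{\kappa,r}$ one can, by monotonicity of $\Cfrak$, extract cleaner substructures on which subdivision paths sit in a ``rigid'' position. Given a graph $G$, I would encode $G$ by choosing $\kappa = |G|$, fixing $r$ large enough to separate edge gadgets, and designating vertices of $G$ as the branch vertices of the subdivided $K_\kappa$ inside $A_{\kappa,r}$, cutting away the unused paths. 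The orientation $\tilde{\Cfrak}$ is chosen globally on $\Cfrak$, uniformly in $\kappa$, so that each subdivision path carries an asymmetric pattern of arc directions: this is the infinitary analogue of the arc-construction of \cite{nesetrilossona}, and is designed so that any homomorphism $\tilde{\Phi}(G)\to \tilde{\Phi}(H)$ must match branch vertices to branch vertices and respect the edge relation.

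The main obstacle, as in the finite case, will be establishing fullness of the embedding, i.e.\ excluding spurious morphisms. Two new difficulties appear in the unbounded setting. First, the internal automorphism group of a subdivided edge gadget is larger when the subdivision length grows, and in the infinite setting one cannot simply pick $r$ ``just large enough'' once and for all; rather, the orientation pattern placed on each path must be rigid \emph{as an oriented path}, which forces a careful global choice of orientation depending on the signature $\L$. Second, in the infinite setting one must verify naturality of the construction across cardinals $\kappa$, so that morphisms $G \to H$ between graphs of different sizes are realised correctly. Monotonicity of $\Cfrak$ is precisely what allows the gadget assembly to remain inside $\Cfrak$ at every cardinality, and the existence of \emph{arbitrarily} long subdivisions in $\Gaif(\Cfrak)$ is what allows the rigidifying orientation to be accommodated. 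Once these two points are checked, full embedding of $\Gra$ into $\tilde{\Cfrak}$, and hence algebraic universality of $\tilde{\Cfrak}$, follows.
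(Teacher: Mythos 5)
Your necessity argument has a genuine gap at its core. You propose the contrapositive via the stability--sparsity equivalence of \cite{monotoneNIP}, but that theorem concerns \emph{$\omega$-nowhere density} only, whereas the failure of total somewhere density merely gives $\kappa$-nowhere density for some, possibly uncountable, $\kappa$ (while the class may well remain $\omega$-somewhere dense and hence unstable). There is no stability--sparsity transfer available at uncountable $\kappa$ to invoke: the paper points out that salvaging that argument would require the partition relation $\kappa \rightarrow (\kappa)^\lambda_2$ for all $\lambda<\kappa$, which is inconsistent with $\ZFC$ for uncountable $\kappa$ (\cite{kanamori}, Proposition 7.1). Moreover, ``encodes linear orders of unbounded length'' does not by itself contradict $\kappa$-nowhere density; turning the order-coding into subdivided cliques in Gaifman graphs is exactly the hard step. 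The paper's actual route avoids stability entirely: a full embedding of the (interpretable) category of cardinals yields, via \Cref{th:interpret}, a primitive positive $\L_{\kappa,\kappa}$-formula $\phi$ with witnesses ordering long sequences in objects $M_\lambda$; then \Cref{ppcomponents} produces connecting paths, and repeated applications of the canonical Erd\H{o}s--Rado theorem (\Cref{erdos}, \Cref{cor:erdos}) together with \Cref{lem:compatible} extract $m$-subdivided cliques or half-graphs of every size in $\Gaif(M_\lambda)$ (\Cref{th:smwdense}). Your sketch has no substitute for this extraction.

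For sufficiency your outline is in the right spirit (an arrow/gadget construction on an orientation), but it misses the three devices that make it work in the unbounded relational setting. First, what the totally somewhere dense hypothesis yields after Ramsey extraction is not ``an arbitrary graph $G$ placed on branch vertices'': the subdivision paths come oriented from smaller to larger index, so by monotonicity one only obtains $G\star M \in \obj(\Cfrak)$ for \emph{well-founded} $G$ (subgraphs of the transitive tournament on a cardinal), and the paper must therefore separately prove that $\W\Gra$ is algebraically universal and embed that category instead of $\Gra$. Second, in relational structures distinct subdivision paths may share non-joint elements, and ``cutting away the unused paths'' by monotonicity cannot separate shared points; the paper resolves this by canonically colouring (\Cref{cor:erdos}) the witnesses so that shared elements fall into the sets $A,B,P$ of an $\L$-gadget, and by the generalised $\star$-construction in which copies of the gadget are allowed to interact (\Cref{lem:injective}, \Cref{th:main}) --- this is the central new technical ingredient and your proposal does not address it. Third, rigidity of a directed path is not sufficient for fullness: homomorphisms could still shift or collapse paths, and the orientation cannot be chosen freely but only as a permutation of tuples of structures already in $\Cfrak$ (\Cref{lem:permutation}, \Cref{lem:uniform}); the paper obtains fullness by composing the path system with the rigid graph $\Hcal$ via $\ostar$ and using the $\Arc$ functor to reduce to the graph statement \Cref{lem:Hcal}, then applying \Cref{th:system}. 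Without these steps the claimed full embedding, and hence algebraic universality of $\tilde\Cfrak$, is not established.
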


The various techniques used in the proof of the above are developed into general machinery that are possibly of broader interest. For instance, we define interpretable categories and show how full embeddings from these provide model-theoretic interpretations. Moreover, we develop a categorical framework for relational gadget constructions that extends the one for graphs in a non-trivial way; here, gadgets are glued in a way that permits different copies of the gadget to interact with one another. Finally, directed relational structures are defined, generalising the concept of a directed graph. These come with a natural faithful functor into the category of directed graphs. 

The necessary background and notation is established in Section~\ref{sec:prelim}. In Section \ref{sec:interpret} we introduce interpretable categories, and subsequently use them to establish that universal categories are somewhere dense in Section \ref{sec:swd}. The proof of the partial converse in the context of monotonicity occupies the next four sections. In Section~\ref{sec:gadgets}, we define relational gadgets and proceed to study their category under homomorphisms. With the gadget technique, we establish in Section \ref{sec:wellfdd} that the category of well-founded graphs with homomorphisms is algebraically universal. This, together with the directed structures introduced in Section \ref{sec:directed}, is instrumentally used in Section~\ref{sec:monotone} in the proof that totally somewhere dense monotone categories admit a full orientation which is algebraically universal.

 \section{Preliminaries}\label{sec:prelim}

    We work in Zermelo-Fraenkel set theory with the Axiom of Choice ($\ZFC$). Throughout this paper, $\L$ denotes a first-order relational language. We write $\mathrm{ar}(R)$ for the arity of each relation symbol $R \in \L$. For a category $\Cfrak$ we write $\obj(\Cfrak)$ for its class of objects and $\morph(\Cfrak)$ for its class of morphisms. For a cardinal $\kappa$ we write $\Cfrak_{<\kappa}$ for the full subcategory of $\Cfrak$ on the objects of size $<\kappa$. We sometimes abuse notation and write $M \in \Cfrak$ to mean that $M \in \obj(\Cfrak)$. Tuples of elements or variables are treated as functions, e.g.\ given a tuple $\bar a$ we write $\bar a(i)$ to denote the $i$-th element of $\bar a$. Likewise, functions from a cardinal $\lambda$ to a set $S$ are frequently treated as tuples of length $\lambda$ from $S$. For a finite cardinal $n \in \omega$ we write $[n]$ for the set $n+1\setminus 1=\{1,\dots,n\}$. 
    
\subsection{Graphs and relational structures}

An $\L$-structure is denoted by $(M,R^M)_{R \in \L}$, where $M$ is its underlying set and $R^M \subseteq M^{\ar(R)}$ is the interpretation of the relation symbol $R \in \L$ in $M$. By abusing notation, often we do not distinguish between an $\L$-structure and its underlying set. 

We say that $x \in M$ is an \emph{isolated point} of $M$ if there is no tuple $\bar m$ from $M$ such that $x \in \bar m$ and $\bar m \in R^M$ for some $R \in \L$. 

A \emph{homomorphism} from an $\L$-structure $M$ to an $\L$-structure $N$ is a map $f:M \to N$ such that for all relation symbols $R \in \L$ and tuples $\bar m \in M^{\ar(R)}$
\begin{equation*}\tag{$*$}\label{eq}
    \bar m \in R^M \implies f(\bar m) \in R^N.
\end{equation*}
A homomorphism $f: M \to N$ is said to be \emph{strong} whenever (\ref{eq}) is a bi-implication. We write $\StrL$ for the category of all $\L$-structures with homomorphisms. 

A formula $\phi(\bar x)$ is said to be primitive positive if it is equivalent to an existential formula $\exists \bar y \psi(\bar x,\bar y)$ where $\phi$ is a conjunction of atomic formulas. To every primitive positive formula $\phi(\bar x)$ (possibly in infinitary logic) we may associate a pointed $\L$-structure $(M_\phi, \bar x)$ whose domain is the set of variables of $\phi$, and for all $R \in \L$ $M_\phi\vDash R(v_1, \dots,v_n)$ if, and only if, $R(v_1,\dots,v_n)$ appears as a conjunct in $\psi(\bar x,\bar y)$. We call this \emph{the canonical structure of $\phi$}. It is easy to see that for any $\L$-structure $A$ and $\bar a \in A$ we have that $A \models \phi(\bar a)$ if, and only if, there is a homomorphism of pointed structures $h: (\Mcal_\phi,\bar x) \to (A, \bar a)$. 

We say that a class of $\L$-structures is \emph{monotone} if it is closed under inverse injective homomorphisms, i.e.\ if $B \in \C$, $A$ is an $\L$-structure and $f:A \to B$ is an injective homomorphism then $A \in \C$. Likewise, we say that $\C$ is \emph{hereditary} if it is closed under inverse injective strong homomorphisms. By a monotone (resp. hereditary) category $\Cfrak$ of $\L$-structures we mean one such that $\obj(\Cfrak)$ is monotone (resp. hereditary).

We say that two $\L$-structures $M,N$ are \emph{permutation equivalent} if $N$ can be obtained from $M$ by applying to every tuple $\bar m \in R^M$ a unique permutation $\sigma_{\bar m} \in S_{\ar(R)}$. More precisely, $M$ and $N$ are permutation equivalent whenever there is a bijection $f: M \to N$ such that for all $R \in \L$ and all $\bar m \in R^M$ there is a permutation $\sigma_{\bar m} \in \Scal_n$ satisfying $R^N=\{\sigma_{\bar m}(f(\bar m)) : \bar m \in R^M\}$. 

\begin{definition}
    Let $\C$ be a class of $\L$-structures. We say that a class $\tilde{\C}$ is an \emph{orientation} of $\C$ if for every $M \in \C$, $\tilde{\C}$ contains some $\tilde{M}$ that is permutation equivalent to $M$. For a subcategory $\Cfrak$ of $\StrL$, we say that $\tilde{\Cfrak}$ is a \emph{full orientation} of $\Cfrak$ if $\tilde{\Cfrak}$ is the full subcategory of $\StrL$ with $\obj(\tilde{\Cfrak})$ some orientation of $\obj(\Cfrak)$. 
\end{definition}

By a graph $G$ we mean an $\{E\}$-structure, where $E$ is a fixed binary relation symbol (so graphs here can have loops and symmetric directed edges). We write $E(G)$ rather than $E^G$ for the edge set of a graph. We say that a graph $G$ is \emph{undirected} if it $E(G)$ is non-reflexive and symmetric, and that it is \emph{directed} if $E(G)$ is non-symmetric and anti-symmetric. We write $\Gra$, $\mathbf{Sym}\Gra$, and $\overrightarrow{\Gra}$ for the categories of graphs, undirected graphs, and directed graphs respectively with graph homomorphisms. 

 For an undirected graph $G$ and $S \subseteq G$ we write $N^G(S)$ for the vertices in $G$ that are reachable by a path from some $s \in S$. Moreover, given an undirected graph $G$ and $r \in \omega$, we write $G^r$ for the \emph{$r$-subdivision of $G$}, i.e.\ the undirected graph obtained by replacing each edge of $G$ by an undirected path of length $r + 1$. Likewise, given a directed graph $G$ we write $G^{(r)}$ for the directed graph obtained by replacing each directed edge $(u,v)$ of $G$ by a path of length $r + 1$ directed from $u$ to $v$. We refer to vertices of $G$ present in $G^r$ ($G^{(r)}$ resp.) as \emph{native}, and to the remaining as \emph{subdivision points}. For a cardinal $\lambda$, we write $K_\lambda$ for the complete undirected graph on $\lambda$ vertices.

\begin{definition}
    Let $\C$ be a class of undirected graphs and $\kappa$ a cardinal. We say that $\C$ is \emph{$\kappa$-nowhere dense} if for every $r \in \omega$ there exists some cardinal $\lambda <\kappa$ such that $K^r_\lambda$ is not a subgraph of any $G \in \C$. Otherwise we say that $\C$ is \emph{$\kappa$-somewhere dense}. We also say that a class is \emph{eventually nowhere dense} if it is $\kappa$-nowhere dense for some cardinal $\kappa$. Conversely, we say that $\C$ is \emph{totally somewhere dense} if $\C$ is $\kappa$-somewhere dense for all cardinals $\kappa$. 
\end{definition}

It is easy to see that for cardinals $\lambda<\kappa$, $\lambda$-nowhere density implies $\kappa$-nowhere density. Moreover, the pigeonhole principle (see \Cref{fact:pigeon}) implies that a class $\C$ is totally somewhere dense if there is some $r \in \omega$ such that for all cardinals $\kappa$, the $r$-subdivided clique of size $\kappa$ is a (not necessarily induced) subgraph of some graph in $\C$.

In this context, nowhere density in the standard sense of Nešetřil and Ossona de Mendez \cite{nowhere} is precisely $\omega$-nowhere density. The connection between $\omega$-nowhere density and stability was first illustrated for graphs by Podewski and Ziegler \cite{podewskiziegler}. Recall that a class $\C$ of $\L$-structures is stable if for all $\L$-formulas $\phi(\bar x,\bar y)$ there is $n \in \omega$ such that there is no $M \in \C$ and $(\bar a_i)_{i \in n}$ from $M$ satisfying $M \models \phi(\bar a_i,\bar a_j) \iff i<j$. Going beyond graphs to relational structures requires working with the class of Gaifman graphs. 

\begin{definition}[Gaifman graph]
Given an $\L$-structure $(M,R^M)_{M \in \L}$ we define the \emph{Gaifman graph} (or \emph{underlying graph}) of $M$, denoted $\Gaif(M)$, to be the undirected graph on vertex set $M$ satisfying:
\begin{center}
    $(x,y) \in E(\Gaif(M)) \iff \exists R \in \L,  \exists v_1,\dots,v_{\mathrm{ar}(R)-2},$ $\exists  \sigma \in \Scal_{\mathrm{ar}(R)} \text{ such that } \sigma(x,y,v_1,\dots,v_{\mathrm{ar}(i)-2}) \in R^M. $
\end{center}
\end{definition}

Intuitively, this is formed by adding an edge between two elements of $M$ whenever they appear together in a relation. For a class of $\L$-structures $\mathcal{C}$ we define the \emph{Gaifman class} of $\C$ to be $\Gaif(\C):=\{ \Gaif(M) : M \in \C \}$. Likewise, given a category $\Cfrak$ of $\L$-structures we write $\Gaif(\Cfrak)$ for the class $\Gaif(\obj(\Cfrak))$ of undirected graphs. 

Phrased in the terminology used here, the following was established by Braunfeld, Dawar, Papadopoulos and the author, generalising the results of Podewski-Zigler and Adler-Adler. 

\begin{theorem}[\cite{monotoneNIP}]\label{monotoneNIP}
    Let $\C$ be a class of $\L$-structures such that $\Gaif(\C)$ is $\omega$-nowhere dense. Then $\C$ is stable. Moreover, if $\C$ is monotone and $\omega$-somewhere dense then $\C$ is unstable. 
\end{theorem}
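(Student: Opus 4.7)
My plan is to prove the two statements separately, both relying on a blend of Ramsey-theoretic extraction and Gaifman's locality theorem. Throughout, we exploit that the Gaifman edge relation is uniformly first-order definable in any $\L$-structure, so that graph-level formulas translate to $\L$-formulas of bounded complexity.

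For the first statement, I would argue by contrapositive: assume $\C$ is unstable and derive that $\Gaif(\C)$ is $\omega$-somewhere dense. Unstability supplies a formula $\phi(\bar x,\bar y)$ and arbitrarily long sequences $(\bar c_i)_{i<n}$ in members of $\C$ with $\phi(\bar c_i,\bar c_j)\iff i<j$. Passing to an ultraproduct $M^*$ of members of $\C$ and then to an infinite order-indiscernible subsequence via Erd\H{o}s--Rado, Gaifman's locality theorem rewrites $\phi(\bar x,\bar y)$ as a Boolean combination of basic local sentences (which do not depend on $\bar x\bar y$ and so cannot detect the asymmetry of the order) and formulas local to the $r$-ball around $\bar x\bar y$ for some fixed $r$. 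Consequently, the asymmetry must be detected by the local component, and whenever $i<j$ the tuples $\bar c_i,\bar c_j$ must be connected by a Gaifman path of length at most $2r$. A further Ramsey colouring by the isomorphism type of the connecting paths yields a monochromatic configuration embedding a subdivided clique $K_n^{s}$ for some uniform $s\leq 2r$ and every $n$, which by transfer reflects to members of $\C$, contradicting $\omega$-nowhere density of $\Gaif(\C)$.

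For the second statement, assume $\C$ is monotone and $\Gaif(\C)$ is $\omega$-somewhere dense, and fix $r\in\omega$ such that for every $n$, some $M_n\in\C$ has $K_n^r$ as a subgraph of $\Gaif(M_n)$. Within each $K_n^r$, the subdivision of the half-graph $H_m^r$ (for $m\leq n/2$) is a subgraph, obtained by fixing a bipartition of the natives and retaining only the subdivision paths indexed by ordered pairs $i\leq j$. Restricting $M_n$ to $V(H_m^r)$ yields by monotonicity a structure $N_m\in\C$. The primitive positive graph formula ``there exists a path of length $r+1$ from $x$ to $y$'' translates to an $\L$-formula $\phi(x,y)$, and after a Ramsey pre-processing that uniformises the isomorphism types of the $\L$-substructures carried by the subdivision paths, $\phi(a_i,b_j)$ will hold in $N_m$ precisely when $i\leq j$. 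This exhibits arbitrarily long order-property witnesses for $\phi$ across $\{N_m\}_{m<\omega}$, so $\C$ is unstable.

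The main obstacle is the first statement, where producing subdivided cliques of uniformly bounded radius from the abstract order property hinges on a careful marriage of Gaifman's locality with Ramsey bookkeeping. Specifically, one must argue that the asymmetry responsible for $\phi(\bar c_i,\bar c_j)\iff i<j$ is indeed localised to a Gaifman ball, and that local certificates across many pairs can be pieced into a coherent clique-subdivision pattern without dissolving under the equivalence induced by Gaifman's normal form. The second statement is more combinatorial, and its subtlety lies in the Ramsey step needed to prevent spurious extra Gaifman edges, surviving the monotone restriction of ambient relations, from creating short paths that would disrupt the order property witnessed by $\phi$.
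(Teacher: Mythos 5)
This theorem is imported from \cite{monotoneNIP}; the present paper gives no proof of it, so your sketch has to stand on its own, and it does not. The serious gap is in your first part. Granting the locality step (on an indiscernible subsequence, working in the finite sublanguage of $\phi$, a Feferman--Vaught argument for far-apart tuples makes $\phi$ symmetric, so all pairs $\bar c_i,\bar c_j$ are joined by Gaifman paths of bounded length), your next claim --- that ``a further Ramsey colouring by the isomorphism type of the connecting paths yields a monochromatic configuration embedding a subdivided clique $K_n^s$'' --- is where the argument breaks. Pairwise short connections do not give internally disjoint paths: the canonical outcome of such a colouring can perfectly well be that all connecting paths funnel through a fixed bounded hub (in a star all leaves are pairwise at distance $2$, yet there is no large clique subdivision), and nothing in your sketch either rules this case out or re-uses the order property to derive a contradiction from it. Showing that an order property cannot be supported by connections passing through bounded separators is precisely the hard content of the Podewski--Ziegler-type argument (iterated localisation/flatness induction), and it is absent. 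Note that where this paper does extract disjoint paths from an order-property-like configuration (\Cref{th:smwdense}), disjointness is obtained only after the equality-type analysis excludes the coordinates on which witnesses coincide, followed by the canonical Erd\H{o}s--Rado theorem and the compatibility argument of \Cref{lem:compatible}; your colouring ``by isomorphism type of the path'' sees each pair separately and cannot control coincidences \emph{across} pairs.

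Your second part is closer to the cited proof (and to \Cref{th:main} here), but as written it also fails: restricting $M_n$ to the vertex set of the subdivided half-graph, if read as taking the induced substructure, keeps every ambient tuple on those vertices. The hypothesis $K_n^r\leq \Gaif(M_n)$ as a subgraph says nothing about the remaining relations --- the induced Gaifman graph on that vertex set could be complete --- and no Ramsey uniformisation of the path types repairs that, since the spurious tuples need not lie on the chosen paths. What monotonicity actually buys is closure under \emph{weak} substructures: one keeps only the tuples realising the chosen path types and discards everything else, and the residual danger is then the padding vertices of higher-arity tuples being shared between different subdivision paths; controlling that sharing again requires a canonical Ramsey step together with a compatibility argument of the kind in \Cref{lem:compatible}, not merely uniformising the isomorphism type of each individual path. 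With those corrections the second half can be carried out along your lines, but the first half needs a genuinely different (Podewski--Ziegler style) argument rather than the direct clique-extraction you propose.
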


\subsection{Ramsey Theory}\label{subs:ramsey}

The core idea of Ramsey theory is that if a structure is large enough then regularities in it are inevitable. This makes sense in both the finite and infinite contexts. In its simplest form, this idea gives the pigeonhole principle. 

\begin{fact}[Pigeonhole principle]\label{pigeonhole}
    Let $\kappa$ be a cardinal, $\lambda<\cf(\kappa)$ and $f:\kappa \to \lambda$ a map. Then there is a set $X \subseteq \kappa$ of size $\kappa$ such that $f$ is constant on $X$.  
\end{fact}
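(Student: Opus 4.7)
The plan is to argue by contradiction, using the defining property of $\cf(\kappa)$ as the least cardinality of a cofinal subset of $\kappa$. For each $\alpha<\lambda$, set $X_\alpha := f^{-1}(\alpha) \subseteq \kappa$, so that $\{X_\alpha : \alpha < \lambda\}$ partitions $\kappa$. If some $X_\alpha$ already has cardinality $\kappa$, we take $X := X_\alpha$ and are done, so the substantive case is when $|X_\alpha|<\kappa$ for every $\alpha<\lambda$; I will show this cannot happen.

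Let $\mu_\alpha := |X_\alpha|$. Then $\{\mu_\alpha : \alpha < \lambda\}$ is a collection of cardinals strictly below $\kappa$, indexed by a set of size at most $\lambda < \cf(\kappa)$. By the definition of cofinality, such a collection cannot be cofinal in $\kappa$, so there exists $\mu<\kappa$ with $\mu_\alpha \leq \mu$ for all $\alpha<\lambda$. Assuming $\kappa$ is infinite (which is implicit in the use of $\cf$), standard cardinal arithmetic then yields
\[
\kappa = \Big|\bigcup_{\alpha<\lambda} X_\alpha\Big| \leq \lambda \cdot \mu = \max(\lambda,\mu) < \kappa,
\]
a contradiction. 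Hence some fiber $X_\alpha$ must have cardinality $\kappa$, which is the desired set $X$.

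The only non-trivial points are the two standard $\ZFC$ facts invoked: that a family of $<\cf(\kappa)$ cardinals below $\kappa$ is bounded below $\kappa$ (immediate from the definition of $\cf(\kappa)$), and the cardinal-arithmetic identity $\lambda \cdot \mu = \max(\lambda,\mu)$ when at least one of $\lambda, \mu$ is infinite and both are less than $\kappa$. The main "obstacle" is really just bookkeeping rather than any deep argument; for finite $\kappa$ the claim is either vacuous or a trivial consequence of the ordinary finite pigeonhole principle, depending on the convention used for $\cf$ on finite cardinals.
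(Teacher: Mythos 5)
Your proof is correct: partition $\kappa$ into fibers $f^{-1}(\alpha)$, observe that if all fibers had size $<\kappa$ then since there are only $\lambda<\cf(\kappa)$ of them their sizes would be bounded by some $\mu<\kappa$, and derive the contradiction $\kappa\leq\lambda\cdot\mu<\kappa$. The paper states this as a Fact without proof, so there is nothing to compare against; your argument is the standard one and is complete, including the correct observation that the claim is vacuous for finite $\kappa$ under the usual conventions on $\cf$.
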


Recall the standard Erd{\"o}s partition arrow notation. Let $\kappa, \lambda, \mu$ be cardinals and $m \in \omega$. We write $\kappa \rightarrow (\lambda)^m_\mu$ if for all colourings $\chi:[\kappa]^m\to \mu$ of the set of subsets of $\kappa$ of size $m$ with $\mu$ many colours there is a set $X \subseteq \kappa$ of order type (equivalently, of size) $\lambda$ such that $\chi$ is constant on $[X]^m$. We recall also the canonical partition arrow notation, defined by Erd{\"o}s and Rado. 

\begin{definition}
    Let $\mu,\kappa$ be cardinals, $m \in \omega$ and $\chi: [\kappa]^m \to \mu $ a $\mu$-colouring of all subsets of size $m$ of $\kappa$. Fix a subset $\Delta \subseteq m$. We say that a set $X\subseteq \kappa$ is \emph{$\Delta$-canonically coloured} whenever the following holds for all increasingly enumerated $Y_1=\{a_i:i\in m\}, Y_2=\{b_i:i \in m\} \in [X]^m$:
    \begin{center}
        $\chi(Y_1)=\chi(Y_2)$ if and only if $\Delta = \{i \in m : a_i = b_i\}.$
    \end{center}
    If there exists a $\Delta\subseteq m$ such that $X$ is $\Delta$-canonically coloured, then we simply say that $X$ is \emph{canonically coloured}. For $\lambda<\kappa$ we write $\kappa \rightarrow \star (\lambda)^m$ if for all $\mu$ and all colourings $\chi: [\kappa]^m \to \mu $ there exists a set $X \subseteq \kappa$ of order type $\lambda$ such that $X$ is canonically coloured by $\chi$.
\end{definition}

Consider the special case where $m=2$. A subset $X\subseteq \kappa$ is canonical with respect to a colouring $\chi:[\kappa]^2\to \mu$ if one of the following occurs for all $i < j$ and $k < \ell$ from $X$:
\begin{enumerate}
    \item $\chi(i,j)=\chi(k,\ell)$;
    \item $\chi(i,j)=\chi(k,\ell)$ if, and only if, $i=k$;
    \item $\chi(i,j)=\chi(k,\ell)$ if, and only if, $j=\ell$;
    \item $\chi(i,j)=\chi(k,\ell)$ if, and only if, $i=k$ and $j = \ell$.
\end{enumerate}

Henceforth, we shall say that an edge colouring of a complete graph is \emph{canonical of type} $1$ (resp.\ $2,3,4$) if it satisfies condition 1 (resp.\ $2,3,4$) above for all edges, that is, all pairs $i<j$ from $\kappa$. More generally, we say that such a colouring is \emph{canonical} whenever it is canonical of any type. 

A well known result of Erd{\"o}s and Rado implies that we may always find monochromatic subsets in a sufficiently large structure. A less known result of the same authors establishes that this is also the case for canonically coloured subsets. 

\begin{theorem}[Erd{\"o}s, Rado, \cite{erdosrado}]\label{erdos}
    The following hold for $\kappa$ an infinite cardinal and $m \in \omega$:
    \begin{enumerate}[i.]
        \item $\beth_m(\kappa)^+ \rightarrow (\kappa^+)^{m+1}_{\kappa}$;
        \item $\beth_{2m+1}(\kappa)^+\rightarrow \star (\kappa^+)^{m+1}$.
    \end{enumerate}
   
\end{theorem}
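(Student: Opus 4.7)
The statement is the classical Erdős--Rado partition theorem, and my plan is to reproduce its standard proof. For (i) I would argue by induction on $m$. The base case $m=0$ is the pigeonhole principle applied to a function $\kappa^+\to\kappa$. For the inductive step, set $\lambda := \beth_m(\kappa)^+$ and fix $\chi : [\lambda]^{m+1} \to \kappa$. The heart of the argument is to construct, by transfinite recursion of length $\mu := \beth_{m-1}(\kappa)^+$, a strictly increasing sequence $(a_\alpha)_{\alpha<\mu}$ in $\lambda$ that is \emph{end-homogeneous}: for every $m$-subset $B$ of $\{a_\alpha : \alpha<\mu\}$, the value $\chi(B\cup\{y\})$ depends only on $B$ whenever $y=a_\gamma$ with $\gamma$ above every index of $B$. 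At stage $\alpha$, elements of $\lambda$ strictly above every previously chosen $a_\beta$ induce functions $[\{a_\beta:\beta<\alpha\}]^m \to \kappa$, of which there are at most $\kappa^{|\alpha|^m}\leq\kappa^{\beth_{m-1}(\kappa)} = 2^{\beth_{m-1}(\kappa)} = \beth_m(\kappa)$; by regularity of $\lambda$ the remaining tail contains one equivalence class of full size $\lambda$, from which $a_\alpha$ is drawn. Once the sequence is built, end-homogeneity produces a well-defined colouring $\tilde\chi : [\mu]^m \to \kappa$, to which the inductive hypothesis yields a $\tilde\chi$-monochromatic $I\subseteq\mu$ of order type $\kappa^+$; the corresponding subset $\{a_\alpha : \alpha\in I\}$ is then $\chi$-homogeneous of size $\kappa^+$.

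For (ii) I would derive the canonical version from (i) via the Erdős--Rado similarity-type trick. Given $\chi : [\beth_{2m+1}(\kappa)^+]^{m+1}\to\mu$ (for any $\mu$), define an auxiliary colouring $\chi': [\beth_{2m+1}(\kappa)^+]^{2m+2}\to C$, where $C$ is the (finite) set of equivalence relations on the collection of $(m+1)$-subsets of a fixed $(2m+2)$-element set: given an increasingly enumerated $Y=\{y_0<\dots<y_{2m+1}\}$, $\chi'(Y)$ records which pairs of $(m+1)$-subsets of $Y$ receive the same $\chi$-colour. Since $|C|<\omega\leq\kappa$, part (i) applied with $m$ replaced by $2m+1$ produces a $\chi'$-monochromatic $X\subseteq\beth_{2m+1}(\kappa)^+$ of order type $\kappa^+$. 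A finite case analysis then shows that the fixed equivalence-relation type on $[X]^{2m+2}$ forces $\chi\restr[X]^{m+1}$ to be $\Delta$-canonical for a unique $\Delta\subseteq m+1$: given two increasingly enumerated $(m+1)$-subsets of $X$, one compares their $\chi$-colours by embedding both inside a common $(2m+2)$-subset of $X$ and reading off the answer from the fixed $\chi'$-value.

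The main obstacles are twofold. In (i), maintaining the counting bound at every stage of the recursion of length $\mu$ hinges on the identity $\kappa^{\beth_{m-1}(\kappa)} = 2^{\beth_{m-1}(\kappa)} = \beth_m(\kappa)$, together with regularity of $\beth_m(\kappa)^+$; these are exactly what is needed to push the construction through limit stages. In (ii), the delicate step is the combinatorial verification that a single equivalence-relation type on $(2m+2)$-subsets cuts down to one of the four (or rather $2^{m+1}$, indexed by $\Delta\subseteq m+1$) canonical behaviours on $(m+1)$-subsets; the choice $2m+1$ in the beth iteration is exactly what is needed so that every pair of $(m+1)$-subsets of $X$ fits jointly inside a $(2m+2)$-subset, enabling the comparison of their $\chi$-values through $\chi'$.
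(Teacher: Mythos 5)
The paper does not prove this statement: it is quoted directly from Erd{\"o}s--Rado \cite{erdosrado} and used as a black box, so there is no internal proof to compare against; what follows assesses your sketch on its own terms. Your part (ii) is the standard derivation of the canonical relation from the ordinary one (colour $(2m+2)$-sets by the equality pattern of $\chi$ on their $(m+1)$-subsets, apply part (i) with exponent $2m+2$, i.e.\ $\beth_{2m+1}(\kappa)^+\rightarrow(\kappa^+)^{2m+2}_{\kappa}$, then run the finite combinatorial analysis on the homogeneous set), and the cardinal arithmetic matches the stated bound; this half is fine as a sketch, granting that the ``finite case analysis'' is the real combinatorial content of the canonical theorem.

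Part (i), however, has a genuine gap in the inductive step. If at stage $\alpha$ you merely pick $a_\alpha$ from the largest equivalence class of points lying above $\{a_\beta:\beta<\alpha\}$, the resulting sequence need not be end-homogeneous: for a later index $\gamma$, the value $\chi(B\cup\{a_\gamma\})$ is governed by the class selected at stage $\gamma$ (an equivalence relation on functions defined on $[\{a_\beta:\beta<\gamma\}]^m$), and nothing forces different $a_\gamma$'s above $B$ to assign $B$ the same colour. The obvious repair --- shrink the candidate pool to the chosen class and keep choosing inside it --- breaks exactly at limit stages: a decreasing chain of fewer than $\lambda$ sets each of size $\lambda=\beth_m(\kappa)^+$ can have empty intersection (partition $\omega_1$ into $\omega$ pieces of size $\omega_1$ and take the tails), so regularity of $\lambda$ does not, as you assert, ``push the construction through limit stages''. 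The standard remedy is to decouple coherence from the recursion: first build, by a closure argument using $\beth_m(\kappa)^{\beth_{m-1}(\kappa)}=\beth_m(\kappa)$, a set $A\subseteq\lambda$ of size $\beth_m(\kappa)$ such that for every $B\subseteq A$ with $|B|\le\beth_{m-1}(\kappa)$ and every $u\in\lambda$ there is $v\in A\setminus B$ with $\chi(b\cup\{v\})=\chi(b\cup\{u\})$ for all $b\in[B]^m$; then fix one external reference point $u\in\lambda\setminus A$ and choose each $a_\alpha\in A$ to agree with $u$ over $[\{a_\beta:\beta<\alpha\}]^m$. Since every $a_\gamma$ is matched against the same $u$, end-homogeneity is automatic and the induced colouring is simply $B\mapsto\chi(B\cup\{u\})$; alternatively one can use the Erd{\"o}s--Rado ramification (tree) argument or an elementary-submodel argument. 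With such a repair, the remainder of your outline (induced $m$-ary colouring, inductive hypothesis, pulling homogeneity back) goes through, and the counting estimates you give are correct.
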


The above, together with the classical Finite Ramsey Theorem and the Erd{\"o}s-Rado Canonical Ramsey Theorem \cite{erdos}, imply the following corollary which we will be making heavy use of. Recall that a cardinal $\kappa$ is said to be a \emph{strong limit cardinal} if for all cardinals $\lambda<\kappa$, $2^\lambda < \kappa$.  

\begin{fact}\label{cor:erdos}
    Let $\kappa$ be a strong limit cardinal. Then for all $m \in \omega$ and all cardinals $\lambda,\mu < \kappa$, there are cardinals $\rho_1,\rho_2<\kappa$ such that 
    \[ \rho_1 \rightarrow (\lambda)^m_{\mu}, \quad \rho_2 \rightarrow *(\lambda)^m.\]
    We write $R(m,\lambda,\mu)$ for the least such $\rho_1$ and $C(m,\lambda)$ for the least such $\rho_2$. Moreover, let $\Kcal(\lambda):=C(2,\lambda)$.
\end{fact}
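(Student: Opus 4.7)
The plan is to combine the two parts of \Cref{erdos} with the classical finite Ramsey theorem, splitting by whether $\lambda$ is finite or infinite. The key auxiliary observation I will need is that for any strong limit cardinal $\kappa$ and any $\nu<\kappa$, every iterated beth value $\beth_n(\nu)$ with $n<\omega$, as well as its successor, lies strictly below $\kappa$. When $\kappa=\omega$ this is immediate since $\nu$ is then finite. For $\kappa>\omega$ one first notes that strong limit cardinals above $\omega$ are limit cardinals (otherwise $\kappa=\mu^+$ would force $2^\mu\geq\kappa$), and the claim then follows by induction on $n$ using $2^{\beth_n(\nu)}<\kappa$ at each step.

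If $\lambda$ is finite, the classical finite Ramsey theorem yields a finite $\rho_1$ with $\rho_1\to(\lambda)^m_\mu$, and the finite case of the Erd{\"o}s--Rado canonical Ramsey theorem \cite{erdos} yields a finite $\rho_2$ with $\rho_2\to\star(\lambda)^m$. Both are trivially below the infinite cardinal $\kappa$, settling this case.

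If $\lambda$ is infinite, set $\nu:=\max(\lambda,\mu)$, an infinite cardinal with $\nu<\kappa$. Applying \Cref{erdos}(i) with parameter $\nu$ and arity one less than in our statement (the edge cases $m\leq 1$ reducing to triviality or pigeonhole) gives $\beth_{m-1}(\nu)^+\to(\nu^+)^m_\nu$. Since $\lambda\leq\nu^+$ and $\mu\leq\nu$, monotonicity of the partition arrow in its outer parameters yields $\beth_{m-1}(\nu)^+\to(\lambda)^m_\mu$, and I take $\rho_1:=\beth_{m-1}(\nu)^+$. Analogously, \Cref{erdos}(ii) produces $\rho_2:=\beth_{2m-1}(\nu)^+$ witnessing $\rho_2\to\star(\lambda)^m$. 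Both cardinals are below $\kappa$ by the opening observation.

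The argument is routine given the cited Ramsey-theoretic inputs; there is no substantive obstacle, only some bookkeeping of beth indices and the appeal to monotonicity needed to pass from $(\nu^+,\nu)$ down to the target parameters $(\lambda,\mu)$.
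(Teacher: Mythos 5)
Your overall route is the same as the paper's: there the Fact is presented as an immediate consequence of \Cref{erdos} together with the finite Ramsey and finite canonical Ramsey theorems, plus the closure of a strong limit $\kappa$ under $\nu\mapsto 2^\nu$ (hence under $\nu\mapsto\nu^+$ and under the finite beth iterates). Your bookkeeping, $\rho_1=\beth_{m-1}(\nu)^+$ and $\rho_2=\beth_{2m-1}(\nu)^+$ with $\nu=\max(\lambda,\mu)$, is exactly the intended computation, and the monotonicity remarks are sound (in particular a subset of a $\Delta$-canonically coloured set is $\Delta$-canonically coloured, so the canonical arrow is monotone in $\lambda$).

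There is, however, one false step in your case analysis. You split on whether $\lambda$ is finite, and in the finite case you claim that the classical finite Ramsey theorem produces a \emph{finite} $\rho_1$ with $\rho_1\rightarrow(\lambda)^m_\mu$. That theorem only covers finitely many colours: if $\mu$ is infinite and $\lambda>m$, no finite $\rho_1$ can satisfy $\rho_1\rightarrow(\lambda)^m_\mu$ (colour $[\rho_1]^m$ injectively; a monochromatic set of size $\lambda>m$ would require two $m$-subsets of the same colour). Since the hypothesis only demands $\lambda,\mu<\kappa$, the subcase ``$\lambda$ finite, $\mu$ infinite'' is permitted and your argument as written fails there. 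The repair stays entirely within your own proof: split instead on whether $\nu=\max(\lambda,\mu)$ is finite or infinite. If $\nu$ is finite, both finite theorems apply (the finite canonical Ramsey theorem genuinely allows arbitrarily many colours, so your $\rho_2$ claim was already fine); if $\nu$ is infinite, your Erd{\"o}s--Rado computation $\beth_{m-1}(\nu)^+\rightarrow(\nu^+)^m_\nu$ applies verbatim and monotonicity yields $(\lambda)^m_\mu$ whether or not $\lambda$ itself is finite. With that adjustment the proof is complete and coincides with the paper's intended argument.
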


We will also be making use of the following Ramsey-type lemma that is relevant for the proofs of Theorems \ref{th:smwdense} and \ref{th:main}.

\begin{lemma}\label{lem:compatible}
    Let $n \in \omega$, $\lambda$ be an infinite cardinal, and $S$ a set. Suppose that there are functions $f_{i,j}:n \to S$ for each $i<j \in \lambda$, such that $f_{i,j}(x)\neq f_{k,\ell}(x)$ for all $x \in n$ and all pairs $(i,j)\neq(k,\ell)$ with $i<j$ and $k<\ell$ from $\lambda$. Then there is a subset $X \subseteq \lambda$ of order type $\lambda$ such that $f_{i,j}(x)\neq f_{k,\ell}(y)$
    for all $x,y \in n$ and pairs $(i,j)\neq(k,\ell)$ with $i<j$ and $k<\ell$ from $X$. 
  \end{lemma}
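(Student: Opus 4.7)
The plan is to reformulate the conclusion in terms of pairwise disjointness of image sets, and to construct $X$ by transfinite recursion of length $\lambda$, after a preparatory step that eliminates the principal obstruction. For each pair $p = (i,j) \in [\lambda]^2$, write $\phi_x(p) := f_{i,j}(x)$ and set $\Phi_p := \{\phi_x(p) : x \in n\} \subseteq S$; the hypothesis says that each $\phi_x$ is injective on $[\lambda]^2$, and the conclusion is equivalent to finding $X \subseteq \lambda$ of order type $\lambda$ on which the family $\{\Phi_p\}_{p \in [X]^2}$ is pairwise disjoint. By injectivity, for each $s \in S$ at most $n$ pairs $p$ satisfy $s \in \Phi_p$, so the ``conflict graph'' on $[\lambda]^2$ with $p \sim q$ iff $\Phi_p \cap \Phi_q \neq \emptyset$ has degree at most $n(n-1)$.

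I construct $X = \{\alpha_\xi : \xi < \lambda\}$ in strictly increasing order, maintaining at each stage that $[X_\xi]^2$ is independent in the conflict graph, where $X_\xi := \{\alpha_\eta : \eta < \xi\}$. The candidates $\alpha > \sup X_\xi$ to avoid split into two types. \emph{Type~I}: a new pair $(\alpha_\eta, \alpha)$ conflicts with some old pair in $[X_\xi]^2$; for each fixed $(\eta, p, x, y)$, injectivity of $\phi_x$ yields at most one bad $\alpha$, for a total of at most $|\xi|^3 \cdot n^2 < \lambda$ bad candidates. \emph{Type~II}: two new pairs $(\alpha_{\eta_1}, \alpha)$ and $(\alpha_{\eta_2}, \alpha)$ with $\eta_1 \neq \eta_2 < \xi$ conflict via a witness $(x, y)$ that must satisfy $x \neq y$ (since $x = y$ is excluded by injectivity), giving $\phi_x(\alpha_{\eta_1}, \alpha) = \phi_y(\alpha_{\eta_2}, \alpha)$.

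The main obstacle is Type~II, since for fixed $(\eta_1, \eta_2, x, y)$ the bad set $\{\alpha : \phi_x(\alpha_{\eta_1}, \alpha) = \phi_y(\alpha_{\eta_2}, \alpha)\}$ can a priori have cardinality $\lambda$. To eliminate this globally, I perform a preparatory step producing $Y \subseteq \lambda$ of order type $\lambda$ such that $\phi_x(\beta_1, \beta_3) \neq \phi_y(\beta_2, \beta_3)$ for every $\beta_1 < \beta_2 < \beta_3$ in $Y$ and every $x \neq y$ in $n$. For $\lambda = \omega$, this follows by applying the Infinite Ramsey Theorem to the finite-valued coloring $c(\beta_1, \beta_2, \beta_3) := (\mathbf{1}[\phi_x(\beta_1, \beta_3) = \phi_y(\beta_2, \beta_3)])_{x, y \in n,\, x \neq y}$ of $[\omega]^3$. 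The monochromatic color of the resulting infinite $Y$ must be the zero vector: if some $(x,y)$-bit were constantly on, then fixing $\beta_2 < \beta_3$ in $Y$ and varying $\beta_1 < \beta_2$ over $Y$ would force $\phi_x(\beta_1, \beta_3)$ to take distinct values all equal to the fixed $\phi_y(\beta_2, \beta_3)$, contradicting the injectivity of $\phi_x$ once $|Y \cap [0, \beta_2)| \geq 2$.

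For uncountable $\lambda$ the naive partition arrow $\lambda \to (\lambda)^3_{\mathrm{fin}}$ fails in general, so the preparatory step must be carried out by a direct transfinite construction of $Y$ exploiting the partial-bijection structure of the maps $\phi_y^{-1} \circ \phi_x : [\lambda]^2 \rightharpoonup [\lambda]^2$; this is the principal technical step of the proof. Once $Y$ is obtained, the main recursion is run within $Y$: Type~II bad candidates vanish by the defining property of $Y$, while Type~I contributes fewer than $\lambda$ bad candidates at each stage $\xi < \lambda$, so a suitable $\alpha_\xi \in Y$ above $\sup X_\xi$ can always be chosen, completing the construction of $X$.
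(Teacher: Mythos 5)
Your reformulation, the bounded-degree observation, and the split into Type~I and Type~II conflicts are all sound, and your handling of Type~I is essentially the paper's own proof: the paper runs exactly this greedy transfinite recursion, choosing each new element to avoid the $<\lambda$ many candidates that would create a new-pair/old-pair conflict (it additionally keeps the partial set inside $\max\{\omega,|\alpha|^+\}$ so that the construction cannot climb cofinally in a singular $\lambda$ --- a safeguard your step ``pick $\alpha_\xi \in Y$ above $\sup X_\xi$'' also needs). Your $\lambda=\omega$ treatment of Type~II via Ramsey on triples is correct. The gap is that for uncountable $\lambda$ the preparatory set $Y$ is never constructed: you announce that it ``must be carried out by a direct transfinite construction'' and call it the principal technical step, but that step is precisely where all the remaining content lies, so the proposal is incomplete exactly at the point that matters.

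Moreover, the deferred step cannot be supplied from the stated hypotheses alone. Under CH, enumerate the countably infinite subsets of $\omega_1$ as $(A_\gamma)_{\gamma<\omega_1}$; for each $\beta<\omega_1$ choose a matching $M_\beta$ on $\beta$ containing an edge inside every infinite $A_\gamma$ with $\gamma<\beta$ and $A_\gamma\subseteq\beta$ (possible greedily, since each such set is infinite and only finitely many vertices are used at each step), and encode each edge $\{\beta_1,\beta_2\}\in M_\beta$ with $\beta_1<\beta_2$ by setting $f_{\beta_1,\beta}(0)=f_{\beta_2,\beta}(1)$ equal to a fresh symbol, all remaining values being pairwise distinct fresh symbols. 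Coordinatewise injectivity (the lemma's hypothesis) holds, yet for every uncountable $X\subseteq\omega_1$ one finds $\beta\in X$ and an $M_\beta$-edge $\{\beta_1,\beta_2\}\subseteq X\cap\beta$, so the pairs $(\beta_1,\beta)$ and $(\beta_2,\beta)$ from $X$ collide across coordinates $0$ and $1$: neither your preparatory $Y$ nor the lemma's $X$ exists for $\lambda=\omega_1$ in that model. So the Type~II obstruction you isolated is genuine and not removable by a cleverer transfinite construction; for what it is worth, the paper's own recursion only verifies compatibility of the new pairs $(k,t_\alpha)$ against pairs from $I_\alpha$ and never addresses two pairs sharing the new top element, so your analysis is the sharper one --- but closing the uncountable case requires either additional homogeneity hypotheses on the $f_{i,j}$ (such as the constancy of equality types over quadruples that the canonical Erd{\"o}s--Rado theorem provides at the places where the lemma is invoked) or a restriction on $\lambda$, not just a more careful recursion.
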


\begin{proof}
We will say that two pairs $i<j$ and $k<\ell$ with $(i,j)\neq(k,\ell)$ from $\lambda$ are \emph{compatible} if $f_{i,j}(x)\neq f_{k,\ell}(y)$ holds for all $x,y \in n$, and otherwise we say that these pairs are \emph{incompatible}. We build a chain $I_2 \subseteq I_3 \subseteq \dots \subseteq \lambda$ using transfinite recursion, such that for all $2\leq \alpha<\lambda$:
\begin{enumerate}[i.]
    \item $I_\alpha \subseteq \max\{\omega,|a|^+\}$;
    \item $I_\alpha$ has order type $\alpha$;
    \item all pairwise distinct pairs $i<j$ and $k<\ell$ from $I_\alpha$ are compatible. 
\end{enumerate}
 Start with $I_2=\{0,1\}$. Observe that for any pair $i<j$ from $\lambda$ there are at most $n^2$ pairs $k<\ell$ that are incompatible with it. Therefore, given $I_\alpha$, there are at most $n^2 \times |\alpha^{(2)}|$ many pairs $k<\ell$ that are incompatible with some $i<j$ from $I_\alpha$. Since $I_\alpha$ has order type $\alpha$ and $\max\{\omega,|\alpha|^+\}$ is regular, the set $\{t \in \max\{\omega,|\alpha|^+\}: t > \sup I_\alpha\}$ has cardinality $\max\{\omega,|\alpha|^+\}$. It follows that there is at least one $t \in \max\{\omega,|\alpha|^+\}$ such that $t>\sup I_\alpha$, and $i<j$ and $k<t$ are compatible for all $i,j,k \in I_\alpha$. Letting $t_\alpha$ be the least such $t$, we may define $I_{\alpha+1}=I_\alpha \cup \{t_\alpha\}$. Clearly, this satisfies the conditions above.

 Suppose that $\gamma<\lambda$ is a limit ordinal and for all $\alpha<\gamma$ all $I_\alpha$ satisfy the required conditions. Let $I_\gamma = \bigcup I_\alpha$. Clearly $I_\gamma$ has order type $\gamma$, while $I_\gamma \subseteq \sup_{\alpha<\gamma} |a|^+ \subseteq |\gamma|^+$. Furthermore, all $i<j$ and $k<\ell$ are compatible for all $i<j$ and $k<\ell$ in $I_\gamma$. By the same argument, $X=\bigcup_{\alpha<\lambda} I_\alpha \subseteq \sup_{\alpha<\lambda} |a|^+=\lambda$ has order type $\lambda$ and contains only pairwise compatible pairs, satisfying the conditions of the claim. 
\end{proof}

\subsection{Algebraically universal categories}

Let $\Delta$ be a similarity type, i.e.\ a first-order language containing only function symbols. By a \emph{universal algebra of type $\Delta$}, we simply mean an $\Delta$-structure in the standard model-theoretic sense. A homomorphism of universal algebras is a map $\chi: \A \to \mathbb{B}$ such that for all tuples $\bar a$ from $\A$ and all $f \in \Delta$ it holds that 
 \[ \chi(f^A(\bar a)) = f^B(\bar b). \]
 We write $\Alg(\Delta)$ for the category of all universal algebras of similarity type $\Delta$ with homomorphisms. 

\begin{definition}
    A category $\Cfrak$ is said to be algebraically universal if for all similarity types $\Delta$ there is a fully faithful functor $\Phi:\Alg(\Delta) \to \Cfrak$.
\end{definition}

Examples of algebraically universal categories of relational structures include $\StrL$ for any $\L$ that has at least one binary relation symbol, the category of directed graphs $\overrightarrow{\Gra}$, the category of connected graphs, the category of $n$-partite undirected graphs for $n\geq 3$ (all categories considered with homomorphisms) and many others (see \cite{pultr}). On the other hand, the category of bipartite graphs is clearly not algebraically universal as any bipartite graph of size $\geq 2$ has a proper endomorphism. Likewise, the category of all total orders with order-preserving maps is not algebraically universal. Indeed, by a theorem of Dushnik and Miller \cite{linorders} it follows that every infinite linear order has a non-trivial endomorphism monoid. Hence, the only rigid total orders are the finite ones, and so for any two rigid total orders $L_1,L_2$ either there is a homomorphism $L_1 \to L_2$ or a homomorphism $L_2 \to L_1$; this condition is clearly violated by rigid graphs.

With regards to categories of algebras, $\Alg(\Delta)$ is algebraically universal if, and only if, $\sum \Delta \geq 2$, i.e.\ the arities of the function symbols add up to $\geq 2$ (Chapter II, Theorem 5.5 in \cite{pultr}). Moreover, the category of semigroups, the category of integral domains of characteristic $0$, the category of distributive lattices, and the category of boolean algebras  (all considered with homomorphisms) are algebraically universal (see \cite{pultr}). In \cite{barto}, it is shown that the category of abstract clones with clone homomorphisms is algebraically universal. On the other hand, the category of groups with group homomorphisms is clearly not algebraically universal, as every group of size $\geq 2$ has a non-trivial automorphism. 

The category $\Pos$ of posets is of particular interest. Indeed, this category is algebraically universal, but its universality depends on the existence of infinite posets. It is easy to see that any finite poset is either rigid or it has a proper endomorphism, i.e.\ an endomorphism which is not an automorphism (see e.g.\ Chapter IV, Proposition 5.8 in \cite{pultr}). It follows that no non-trivial group is the endomorphism monoid of a finite poset. A similar issue occurs with the category of semigroups. To account for this, we propose the following definition as a sensible relativisation of the notion of algebraic universality that takes into consideration the sizes of the structures in the category.  

\begin{definition}
    Let $\kappa$ be an infinite cardinal. A category $\Cfrak$ is said to be $\kappa$-algebraically universal, or $\kappa$-universal, if for all similarity types $\Delta$ there is a fully faithful functor $\Phi:\Alg(\Delta)_{<\kappa} \to \Cfrak_{<\kappa}$ from the category of universal algebras of type $\Delta$ and size $<\kappa$ into the full subcategory of $\Cfrak$ of objects of size $<\kappa$. 
\end{definition}

 In this context, the category of posets is not $\omega$-universal but it is $\kappa$-universal for all $\kappa>\omega$. Note, therefore, that being algebraically universal is not equivalent to being $\kappa$-universal for all infinite cardinals $\kappa$. Still, the category of graphs is $\kappa$-universal for all infinite $\kappa$. In practice, this implies that $\kappa$-universality is determined by whether a category fully embeds the category of graphs of size $<\kappa$.

\begin{fact}[\cite{pultr}]
A category $\Cfrak$ is $\kappa$-algebraically universal if, and only if, there is a fully faithful functor $\Phi:\Gra_{<\kappa} \to \Cfrak_{<\kappa}$.
\end{fact}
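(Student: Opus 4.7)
The plan is to prove both directions by composing the hypothesised fully faithful functor with Pultr's classical size-controlled encodings between $\Gra$ and $\Alg(\Delta)$.

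For the $(\Rightarrow)$ direction, I apply $\kappa$-universality to a small similarity type $\Delta_0$ with $\sum \Delta_0 \geq 2$, for concreteness $\Delta_0 = \{f,g\}$ consisting of two unary function symbols. This yields a fully faithful $F \colon \Alg(\Delta_0)_{<\kappa} \to \Cfrak_{<\kappa}$, so it suffices to produce a fully faithful $\Psi \colon \Gra_{<\kappa} \to \Alg(\Delta_0)_{<\kappa}$ and take the composite $F \circ \Psi$. For $\Psi$ I recall the standard encoding of a graph $G$ as the unary algebra on $V(G) \sqcup E(G) \sqcup \{\ast\}$ whose two operations rigidly encode edge-endpoint incidence, arranged so that algebra homomorphisms between encoded structures correspond precisely to graph homomorphisms between the original graphs. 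The underlying set has cardinality $|V(G)| + |E(G)| + 1$, which is $<\kappa$ whenever $|V(G)|<\kappa$ by the infinitude of $\kappa$ and standard cardinal arithmetic, so $\Psi$ restricts as needed.

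For the $(\Leftarrow)$ direction, suppose $H \colon \Gra_{<\kappa} \to \Cfrak_{<\kappa}$ is fully faithful and fix a similarity type $\Delta$. By the algebraic universality of $\Gra$ established in \cite{pultr1964concerning}, there is a fully faithful functor $K \colon \Alg(\Delta) \to \Gra$; in Pultr's construction each $\Delta$-algebra $A$ is encoded by a graph whose vertex set consists of $A$ together with gadget vertices indexing the function symbols and their values, of total size at most $\max\{|A|,|\Delta|,\omega\}$. Under the standing convention $|\Delta|<\kappa$, this restricts to $K \colon \Alg(\Delta)_{<\kappa} \to \Gra_{<\kappa}$, and composition with $H$ delivers the required functor.

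The main obstacle is the size control of $K$ in the second direction. A straightforward cardinality count shows that if $|\Delta|\geq\kappa$, then $\Alg(\Delta)_{<\kappa}$ can have strictly more isomorphism classes than $\Gra_{<\kappa}$, so no fully faithful functor between them exists; this forces the implicit restriction $|\Delta|<\kappa$ used in \cite{pultr}. Granted that restriction, both implications reduce to the routine verification that Pultr's encodings preserve full faithfulness while keeping the cardinality bound $<\kappa$, the latter following from the infinitude of $\kappa$ once $|\Delta|<\kappa$.
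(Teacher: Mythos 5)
The paper offers no proof of this Fact — it is quoted from \cite{pultr} — and your argument is exactly the intended relativisation of that classical material: compose the hypothesised fully faithful functor with size-controlled versions of the Hedrlín--Pultr full embedding $\Gra \to \Alg(\Delta_0)$ for a type $\Delta_0$ of two unary symbols, and of Pultr's full embedding $\Alg(\Delta)\to\Gra$, checking that an object of size $\lambda<\kappa$ is sent to one of size at most $\max\{\lambda,|\Delta|,\omega\}<\kappa$. Two remarks. First, your caveat about the similarity type is correct and worth stating: since a fully faithful functor is injective on isomorphism classes, $\Gra_{<\kappa}$ has at most $\kappa\cdot\sup_{\lambda<\kappa}2^{\lambda}$ such classes while $\Alg(\Delta)_{<\kappa}$ has at least $2^{|\Delta|}$ already on two-element carriers with unary symbols, so with $\Delta$ unrestricted the right-to-left direction would fail even for $\Cfrak=\Gra$; the Fact must indeed be read with the convention $|\Delta|<\kappa$ (consistent with the paper's standing hypotheses of the form $|\L|<\kappa$), and with that convention your two-unary-symbol $\Delta_0$ is admissible for the forward direction. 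Second, a minor imprecision: the concrete encoding you sketch, with carrier $V(G)\sqcup E(G)\sqcup\{\ast\}$ and two endpoint operations, is not full as described — if $\ast$ is a common fixed point of both operations, the constant map onto $\ast$ is an algebra homomorphism induced by no graph homomorphism — so one must invoke the genuine Hedrlín--Pultr rigidifying construction rather than the naive one; since its carrier still has cardinality of order $|V(G)|+|E(G)|$, your size bookkeeping, and hence the proof, is unaffected.
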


 In light of the above, we may phrase the characterisation of algebraically universal categories of graphs of Nešetřil and Ossona de Mendez as follows. 

\begin{theorem}[\cite{nesetrilossona}]
    Let $\Cfrak$ be an $\omega$-universal category of graphs. Then $\Gaif(\Cfrak)$ is $\omega$-somewhere dense. Moreover, if $\Cfrak$ is a monotone category of graphs such that $\Gaif(\Cfrak)$ is $\omega$-somewhere dense, then there is a full orientation $\tilde \Cfrak$ of $\Cfrak$ which is $\omega$-universal.
\end{theorem}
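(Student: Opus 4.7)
The plan is to prove both directions using the machinery developed in the paper, specialised to the countable/finite setting; indeed the stated theorem is the $\omega$-specialisation of Theorem~\ref{th:maintheorem}, so the same strategy applies verbatim with only minor simplifications.

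For the forward direction, assume $\Cfrak$ is $\omega$-universal and suppose for contradiction that $\Gaif(\Cfrak)$ is $\omega$-nowhere dense. By Theorem~\ref{monotoneNIP}, the class $\obj(\Cfrak)$ is then stable. On the other hand, the Fact recalled above gives a fully faithful functor $\Phi:\Gra_{<\omega}\to \Cfrak_{<\omega}$. The interpretability framework of Section~\ref{sec:interpret} will produce from such a $\Phi$ an FO-interpretation of $\Gra_{<\omega}$ inside $\obj(\Cfrak)$. Since $\Gra_{<\omega}$ is unstable (witnessed by arbitrarily large finite half-graphs) and FO-interpretations preserve instability, this forces $\obj(\Cfrak)$ to be unstable, yielding the desired contradiction.

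For the backward direction, assume $\Cfrak$ is monotone and $\Gaif(\Cfrak)$ is $\omega$-somewhere dense. By the pigeonhole observation following the definition of somewhere density, we may fix a single $r\in\omega$ such that for every $n\in\omega$ the $r$-subdivided clique $K_n^r$ is a subgraph of some $G\in\Cfrak$; by monotonicity, $K_n^r\in\Cfrak$ for all $n$. Applying the relational gadget construction of Section~\ref{sec:gadgets} with the $r$-subdivided edge as the gadget, and the directed-structures framework of Section~\ref{sec:directed}, I would define $\tilde\Phi\colon \overrightarrow{\Gra}_{<\omega}\to \tilde\Cfrak_{<\omega}$ as follows: for a finite directed graph $H$ on vertex set $V$, let $\tilde\Phi(H)$ be the $r$-subdivision of $K_{|V|}$ in which the subdivided path between natives $u$ and $v$ is oriented $u\to v$ if $(u,v)\in E(H)$ and $v\to u$ otherwise, where $\tilde\Cfrak$ is the corresponding full orientation of $\Cfrak$. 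Composing with the obvious fully faithful embedding of $\Gra_{<\omega}$ into $\overrightarrow{\Gra}_{<\omega}$ and invoking the Fact above then yields $\omega$-universality of $\tilde\Cfrak$.

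I expect the main obstacle to be the \emph{fullness} of $\tilde\Phi$: showing that every homomorphism $f\colon \tilde\Phi(H_1)\to \tilde\Phi(H_2)$ is of the form $\tilde\Phi(g)$ for a (unique) $g\colon H_1\to H_2$. This decomposes into (i) proving that $f$ sends native vertices to native vertices, which uses the combinatorial rigidity of $r$-subdivided cliques (native vertices are the unique ones meeting $|V|-1$ internally disjoint directed $(r{+}1)$-paths), and (ii) verifying that the direction of each subdivided path is preserved, which is precisely where passing to a full orientation is essential, since an unoriented subdivided path admits a length-reversing automorphism that would destroy faithfulness. The delicate point is handling the interaction between distinct subdivided paths that share a native endpoint — exactly the situation the gadget framework of Section~\ref{sec:gadgets} is designed to tame, generalising the finite Nešetřil–Ossona de Mendez construction. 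The forward direction, by contrast, reduces cleanly once the interpretability machinery of Section~\ref{sec:interpret} is in place, since it packages all the model-theoretic content needed to apply Theorem~\ref{monotoneNIP}.
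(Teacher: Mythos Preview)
Your forward direction is correct and matches the paper exactly: this is precisely Corollary~\ref{cor:omegauni} followed by Corollary~\ref{cor:omuni}.

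Your backward direction, however, has a genuine gap: the functor $\tilde\Phi$ you describe does not work. First, it is not functorial. A homomorphism $g:H_1\to H_2$ of directed graphs need not be injective on vertices; if $g(u)=g(v)$ for distinct $u,v\in V(H_1)$, the oriented subdivided path between $u$ and $v$ in $\tilde\Phi(H_1)$ has no image in $\tilde\Phi(H_2)$, since $K_{|V(H_2)|}^r$ contains no loop at $g(u)$. Second, it is not faithful even on injective maps: if $H$ is the edgeless digraph on $\{u,v\}$, then $\tilde\Phi(H)$ is a single directed $(r{+}1)$-path, which is rigid, whereas $H$ has the nontrivial automorphism swapping $u$ and $v$. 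Third, it is not full: with $H_1$ the single arc $u\to v$ and $H_2$ the edgeless digraph on $\{u,v\}$, there is no homomorphism $H_1\to H_2$, yet $\tilde\Phi(H_1)$ and $\tilde\Phi(H_2)$ are both directed $(r{+}1)$-paths and hence isomorphic. Your rigidity heuristic about native vertices being distinguished by degree does not rescue this, because it concerns the wrong issue: the problem is not locating natives in the target, but that encoding \emph{non}-edges of $H$ by oriented paths creates spurious morphisms.

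The paper's route is substantially different. Rather than encoding a graph into an oriented subdivided clique, it glues a fixed gadget along each edge via the $\star$ construction. Crucially, the directed $r$-path alone is still too flabby to serve as this gadget (for essentially the reason above: $G\mapsto G^{(r)}$ is not full, since a copy of $P_r$ can land starting at a subdivision vertex). The paper therefore first composes with the specific rigid graph $\Hcal$ of Definition~\ref{hcal}, setting $M_\Hcal=\Hcal\ostar M$; Lemma~\ref{lem:Hcal} then guarantees that the only homomorphisms $\Hcal^{(r)}\to G\star\Hcal^{(r)}$ are the canonical $\phi_{u,v}$, which via $\Arc$ forces every morphism $M_\Hcal\to G\star M_\Hcal$ to be a $\phi_{u,v}$ as well. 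This is the missing idea in your sketch: you need an auxiliary rigid gadget (here $\Hcal$) to break the residual symmetries of subdivided paths before Theorem~\ref{th:system} can be applied.
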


\section{Interpretable Categories}\label{sec:interpret}

In model theory, interpretations allow to definably construct a structure within a different one, possibly in a different language. This has the implication that (some of) the model-theoretic properties of the interpreted structure are carried through to the one that interprets it. In this context, we show that full embeddings give rise to primitive positive interpretations, providing a means to translate the algebraic wildness of a category into the model-theoretic wildness of its respective class of objects. This idea is present in \cite{nesetrilossona}, but we generalise it to a broader context here.  

Fix an infinite cardinal $\kappa$ and a relational language $\L'$. By an $\L_{\kappa,\kappa}$ \emph{interpretation scheme} of  $\L$-structures in $\L'$-structures we mean a tuple $\Ical=(\phi(\bar x),\psi_R(\bar y_1,\dots,\bar y_{\ar(R)}))_{R \in \L}$ of ${\L'}_{\kappa,\kappa}$-formulas with $|\bar x|=|\bar y_i|=\lambda<\kappa$. This defines a map $I:\Str(\L') \to \StrL$ such that for an $\L'$-structure $M$, $I(M)$ is the $\L$-structure on the domain $M^\lambda$ satisfying $(f_1,\dots,f_n) \in R^{I(M)}$ if, and only if, $M \models \psi_R(f_1,\dots,f_n)$ for all $R \in \L$ with $\ar(R)=n$. We say that a class $\Cfrak$ of $\L$-structures is $\L_{\kappa,\kappa}$-\emph{interpretable} in a class $\Dfrak$ of $\L'$-structures if there is an $\L_{\kappa,\kappa}$ interpretation scheme $\Ical$ such that for all $N \in \Cfrak$ there is some $M \in \Dfrak$ satisfying $N \iso I(M)$. If $\Cfrak$ is $\L_{\omega,\omega}$-interpretable in $\Dfrak$, we simply say that $\Cfrak$ is \emph{interpretable} in $\Dfrak$, or that $\Dfrak$ \emph{interprets} $\Cfrak$. 

\begin{fact}
    If $\Cfrak$ is interpretable in $\Dfrak$ and $\Dfrak$ is stable then so is $\Cfrak$.
\end{fact}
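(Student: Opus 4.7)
The plan is to prove the contrapositive: assume $\Cfrak$ is unstable; I want to deduce that $\Dfrak$ is also unstable. The engine is the standard syntactic translation $\theta \mapsto \theta^*$ induced by the scheme $\Ical=(\phi(\bar x), \psi_R(\bar y_1, \ldots, \bar y_{\ar(R)}))_{R \in \L}$. To each $\L$-formula $\theta$ I would associate an $\L'$-formula $\theta^*$ by first replacing each variable $x$ appearing in $\theta$ with a $\lambda$-tuple $\bar z_x$ of fresh $\L'$-variables, and then recursing on complexity: an atomic formula $R(x_1, \ldots, x_m)$ becomes $\psi_R(\bar z_{x_1}, \ldots, \bar z_{x_m})$; an equality $x=y$ becomes the conjunction of componentwise equalities of $\bar z_x$ and $\bar z_y$; Boolean connectives are preserved; and existential quantifiers $\exists x$ become $\exists \bar z_x$ (relativised to $\phi$ if one reads $\phi$ as a domain formula). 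Since we are in the $\L_{\omega,\omega}$ regime, $\lambda<\omega$ is finite, so $\theta^*$ remains a finitary first-order $\L'$-formula.

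A routine induction on complexity then yields the standard translation lemma: for every $M \in \Str(\L')$, every finite $\L$-formula $\theta(x_1, \ldots, x_n)$, and every sequence of elements $b_1, \ldots, b_n \in I(M)$, viewed as $\lambda$-tuples $\bar b_1, \ldots, \bar b_n$ from $M$,
\[ I(M) \models \theta(b_1, \ldots, b_n) \quad \iff \quad M \models \theta^*(\bar b_1, \ldots, \bar b_n). \]

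Now suppose $\Cfrak$ is unstable, witnessed by an $\L$-formula $\theta(\bar x, \bar y)$ with $|\bar x|=|\bar y|=k$: for every $n \in \omega$ there exist $N_n \in \Cfrak$ and a sequence $(\bar a_i)_{i<n}$ of $k$-tuples in $N_n$ with $N_n \models \theta(\bar a_i, \bar a_j) \iff i<j$. By interpretability, pick $M_n \in \Dfrak$ with $N_n \simeq I(M_n)$ and transfer each $\bar a_i$ through this isomorphism to a $k$-tuple of elements of $I(M_n)$. Each such element is itself a $\lambda$-tuple from $M_n$, so concatenating produces tuples $\bar c_i$ of length $\lambda k$ from $M_n$. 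The translation lemma then gives $M_n \models \theta^*(\bar c_i, \bar c_j) \iff i<j$, and since this holds for arbitrarily large $n$, the $\L'$-formula $\theta^*$ witnesses instability of $\Dfrak$, contradicting its stability.

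The main obstacle is really just the careful bookkeeping of the translation, especially unfolding equalities into componentwise equalities of $\lambda$-tuples and packing elements of $I(M)$ as finite tuples from $M$. The essential point that makes everything go through is the finiteness of $\lambda$ guaranteed by the $\L_{\omega,\omega}$ restriction, which ensures $\theta^*$ is finitary; no Ramsey- or cardinal-theoretic input is required, and the argument is standard model-theoretic folklore on the preservation of stability under interpretations.
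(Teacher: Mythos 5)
Your proof is correct: the syntactic translation $\theta\mapsto\theta^*$ (with variables replaced by $\lambda$-tuples, $\lambda<\omega$, equality unfolded componentwise, and atoms replaced by the $\psi_R$) together with the routine translation lemma transfers an order-property witness from $\Cfrak$ to $\Dfrak$, which is exactly what is needed under the paper's class-based definition of stability. The paper states this as a known fact without proof, and your argument is precisely the standard justification it implicitly relies on, so there is nothing to compare beyond noting that your bookkeeping (finiteness of $\lambda$, uniformity of the single scheme $\Ical$ across all of $\Cfrak$) is handled correctly.
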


To obtain an interpretation from a full embedding, we ought to make sure that the embedded category satisfies certain criteria. A sufficient set of such criteria is provided by the following definition. 

\begin{definition}\label{def:interpretable}
    Let $\Cfrak$ be a subcategory of $\StrL$. We say that $\Cfrak$ is \emph{interpretable} if there is $\bullet \in \obj(\Cfrak)$, and for each $R \in \L$ of arity $n$ some $\Acal_R \in \obj(\Cfrak)$ such that:
    \begin{enumerate}[i.]
        \item\label{int:1} there are  homomorphisms $h_i:\bullet \to \Acal_R \in \morph(\Cfrak)$ for each $i \in n$;
        \item for all $M \in \obj(\Cfrak)$, $\Hom(\bullet,M)$ is a set and the $\L$-structure
        \[M_\bullet:=\Hom(\bullet,M) \]    
        \[(f_0,\dots,f_{n-1}) \in R^{M_\bullet} \iff \exists g:\Acal_R \to M \in \morph(\Cfrak) \text{ with } f_i=g\circ h_i\]

        is isomorphic to $M$ (in $\StrL$). 
    \end{enumerate}
\end{definition}
Intuitively, the $\bullet$ object represents a ``free'' singleton point, while each $\Acal_R$ represents a ``free'' $R$ relation in $\Cfrak$. For instance, in the category $\Gra$ of all graphs $\Acal_E$ is a directed edge, while in the category $\Sym\Gra$ of undirected graphs $\Acal_E$ is a symmetric edge (so it consists of two distinct relations). Clearly, $\StrL$ is interpretable, while other examples include the simplicial category, the category of all bipartite graphs, the category of all posets, and the category of all uniform $n$-hypergraphs amongst others. On the other hand, the category of connected undirected graphs is not interpretable because there is no $\bullet$ object. 

While the above definition could certainly be made stricter, e.g.\ by requiring that $\End(\bullet)$ is trivial, nonetheless it suffices for our purposes here, meaning that any full embedding from an interpretable category gives rise to an primitive positive $\L_{\kappa,\kappa}$ interpretation for a sufficiently large cardinal $\kappa$.

\begin{theorem}\label{th:interpret}
    Let $\Cfrak$ be an interpretable subcategory of $\StrL$, and $\Dfrak$ a subcategory of $\Str(\L')$ where $\L'$ is a relational language. Suppose that there is a full embedding $\Psi:\Cfrak \to \Dfrak$ and let $\kappa=\sup\{|\L'|^+,|\Psi(\bullet)|^+,|\Psi(\Acal_R)|^+:R \in \L\}$. Then there is a primitive positive $\L_{\kappa,\kappa}$-interpretation of $\Cfrak$ in $\Dfrak$.
\end{theorem}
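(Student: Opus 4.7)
The plan is to extract the interpretation scheme directly from the interpretable structure on $\Cfrak$, transported through $\Psi$. Set $\lambda := |\Psi(\bullet)|$ and fix an enumeration $\Psi(\bullet) = \{b_i : i < \lambda\}$. Because $\Psi$ is full and faithful, the map $f \mapsto \Psi(f)$ is a bijection $\Hom_\Cfrak(\bullet, N) \to \Hom_\Dfrak(\Psi(\bullet), \Psi(N))$ for every $N \in \Cfrak$, and each such morphism is in particular an $\L'$-homomorphism, hence encoded by a $\lambda$-tuple $\bar x$ over $\Psi(N)$ satisfying the atomic diagram of $\Psi(\bullet)$ under the identification $b_i \leftrightarrow x_i$. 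Accordingly, I take $\phi(\bar x)$ to be the quantifier-free primitive-positive conjunction
\[ \phi(\bar x) := \bigwedge_{R' \in \L'} \bigwedge_{(b_{i_1}, \ldots, b_{i_{\ar(R')}}) \in (R')^{\Psi(\bullet)}} R'(x_{i_1}, \ldots, x_{i_{\ar(R')}}).\]

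For the relation formulas, fix for each $R \in \L$ of arity $n$ an enumeration $\Psi(\Acal_R) = \{c_j : j < \lambda_R\}$ with $\lambda_R := |\Psi(\Acal_R)|$, and let $\phi_R(\bar z)$ be the analogous conjunction describing the atomic diagram of $\Psi(\Acal_R)$. Each $\Psi(h_i) : \Psi(\bullet) \to \Psi(\Acal_R)$ determines a function $\sigma_i : \lambda \to \lambda_R$ by $\Psi(h_i)(b_k) = c_{\sigma_i(k)}$, and I then set
\[\psi_R(\bar y_1, \ldots, \bar y_n) := \exists \bar z \left( \phi_R(\bar z) \wedge \bigwedge_{i \in n} \bigwedge_{k \in \lambda} y_{i,k} = z_{\sigma_i(k)} \right),\]
which is primitive positive and asserts that the $\bar y_i$ arise by precomposing some common homomorphism $\Psi(\Acal_R) \to M$ with the maps $\Psi(h_i)$.

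To verify the scheme, given $N \in \Cfrak$ I take $M := \Psi(N) \in \Dfrak$. The bijection described above identifies $\phi(M)$ with $\Hom_\Cfrak(\bullet, N)$, and under this identification $M \models \psi_R(\bar y_1, \ldots, \bar y_n)$ holds precisely when the corresponding $f_1, \ldots, f_n$ factor as $f_i = g \circ h_i$ for some $g : \Acal_R \to N$ in $\Cfrak$, using fullness of $\Psi$ once more to pull a witnessing $g' : \Psi(\Acal_R) \to M$ back to a genuine morphism of $\Cfrak$. By condition (ii) of \Cref{def:interpretable}, this is exactly the relation $R^{N_\bullet}$, and $N_\bullet \iso N$ in $\StrL$, so $I(M) \iso N$ as required. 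The main subtlety is the cardinality bookkeeping: the conjunction in $\phi$ has size at most $\max(|\L'|, |\Psi(\bullet)|, \aleph_0) < \kappa$, similarly for each $\phi_R$, and the existential block binds $\lambda_R < \kappa$ variables, so all formulas lie in $\L'_{\kappa,\kappa}$; primitive positivity is immediate from the construction. I expect no deeper obstacle here: the proof is essentially a direct translation of the abstract interpretability data into infinitary first-order syntax, with the main care being simply to keep all cardinal bounds strictly below $\kappa$.
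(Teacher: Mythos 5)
Your proof is essentially the same as the paper's: both take $\phi$ and $\phi_R$ to be the atomic diagrams of $\Psi(\bullet)$ and $\Psi(\Acal_R)$ respectively, encode precomposition with the morphisms $\Psi(h_i)$ via equality conjuncts in $\psi_R$, and verify the scheme by appealing to fullness of $\Psi$ and condition (ii) of Definition~\ref{def:interpretable}; your $\sigma_i$ is just the index-level description of the paper's $\hat h_i$. Your closing remarks on cardinality bookkeeping make explicit a step the paper leaves implicit, but the argument is identical in substance.
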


\begin{proof}
    Without loss of generality, we may assume that the underlying domain of $\Psi(\bullet)$ is a cardinal $\mu<\kappa$, and likewise, that the domain of $\Psi(\Acal_R)$ is a cardinal $\lambda_R < \kappa$. Let $\bar x$ be indexed by $\mu$ and consider the formula
    
    \[
    \phi_\bullet(\bar x):=\bigwedge_{P \in \L'}\bigwedge_{(c_0,\dots,c_{\text{ar}(P)-1}) \in \mu^{ar(P)}} \bigwedge_{\Psi(\bullet) \models P(\bar c)}P(x_{c_0},\dots,x_{c_{\text{ar}(P)-1}}),\]
    
    defined such that for all $N \in obj(\Dfrak)$ and $f \in N^{\mu}$:
    
     \[ N \models \phi_\bullet(f)\iff f \in\Hom(\Psi(\bullet),N)\]
    
    Similarly, for all $R \in \L$ we define formulas $\phi_R(\bar z)$ with $|\bar z|=\lambda_R$ such that for all $f \in N^{\lambda_R}$: 
   \[ N \models \phi_R(f)\iff f\in\Hom(\Psi(\Acal_R),N)\]
    
    For all $R \in \L$ with $\ar(R)=n$, let $h_i\in \Hom(\bullet,\Acal_R)$ be the homomorphisms from (\ref{int:1}) in \Cref{def:interpretable}, and consider $\hat h_i=\Psi(h_i)\in \Hom(\Psi(\bullet),\Psi(\Acal_R))$. Fix tuples of variables $\bar y_0,\dots,\bar y_{n-1}$ labelled by $\mu$, and define formulas:
    
    \[\psi_R(\bar y_0,\dots,\bar y_{n-1}):=(\exists \bar z)(\bigwedge_{i\in n}\bigwedge_{\alpha\in \mu}[\bar y_i(\alpha)=\bar z(\hat h_i(\alpha))]\land \phi_R(\bar z)).
    \]
    
    We claim that the tuple $\Ical=(\phi_\bullet,\psi_R)_{R \in \L}$ is an $\L_{\kappa,\kappa}$ interpretation. Indeed, given $M \in \C$ consider $I\circ \Psi(M)$. This is an $\L$-structure such that for all $R\in \L$ with $\ar(R)=n$ and $f_0,\dots,f_{n-1} \in \Psi(M)^\mu$

    \[(f_0,\dots,f_{n-1})\in R^{I \circ \Psi(M)} \iff \Psi(M)\models \bigwedge_{i\in n}\phi_\bullet(f_i)\land \psi_R(f_0,\dots,f_{n-1}). \]

    The latter holds if, and only if, $f_i \in \Hom(\Psi(\bullet),\Psi(M))$ and there is a homomorphism $g \in \Hom(\Psi(\Acal_R),\Psi(M))$ such that $f_i = g \circ \hat h_i$ for all $i \in n$. Since $\Psi$ is fully faithful, this occurs if and only if there are $F_i=\Psi^{-1}(f_i) \in \Hom(\bullet,M)$ and $G=\Psi^{-1}(g) \in \Hom(\Acal_R,M)$ with $F_i=G\circ h$.

    It therefore follows that the map
    \begin{align*}
        \chi: I\circ \Psi(M) &\to M_\bullet \\
         f &\mapsto \Psi^{-1}(f)
    \end{align*}
    
    is an isomorphism of $\L$-structures, and so $I \circ \Psi(M) \iso M_\bullet \iso M$.
 \end{proof}

Consequently, if $\Psi$ maps into $\Dfrak_{<\omega}$ we obtain an interpretation of $\Cfrak$ in $\Dfrak$. In particular, this has the following implication. 

\begin{corollary}\label{cor:omegauni}
    Let $\Cfrak$ be an $\omega$-universal subcategory of $\StrL$. Then $\Cfrak$ interprets the class of all finite graphs. 
\end{corollary}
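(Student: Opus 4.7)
The plan is a direct application of Theorem \ref{th:interpret} together with the Fact characterising $\omega$-universality via fully faithful functors from $\Gra_{<\omega}$. By that Fact, $\omega$-universality of $\Cfrak$ yields a fully faithful functor $\Psi : \Gra_{<\omega} \to \Cfrak_{<\omega}$. To invoke Theorem \ref{th:interpret}, I need to confirm that the source category $\Gra_{<\omega}$ satisfies Definition \ref{def:interpretable}; once that is done, the remark immediately following the theorem (that $\Psi$ landing in $\Dfrak_{<\omega}$ produces a finitary interpretation) delivers the conclusion.

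For interpretability, I would take the witness $\bullet$ to be the one-vertex graph with no edges, and $\Acal_E$ to be the graph on two vertices $v_0$ and $v_1$ with a single directed edge from $v_0$ to $v_1$; both lie in $\Gra_{<\omega}$. The two required homomorphisms $h_0, h_1 : \bullet \to \Acal_E$ send the unique vertex of $\bullet$ to $v_0$ and to $v_1$, respectively. For any finite graph $G$, the set $\Hom(\bullet, G)$ is in canonical bijection with the vertex set of $G$ via $f \mapsto f(v)$, where $v$ denotes the unique vertex of $\bullet$. Moreover, a pair $(f_0, f_1)$ factors as $f_i = g \circ h_i$ for some $g : \Acal_E \to G$ if and only if $(f_0(v), f_1(v)) \in E(G)$. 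This identifies $G_\bullet$ with $G$, so $\Gra_{<\omega}$ is interpretable.

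Since $\Psi$ maps into $\Cfrak_{<\omega}$, both $\Psi(\bullet)$ and $\Psi(\Acal_E)$ are finite structures. Applying Theorem \ref{th:interpret} and noting that the cardinal $\kappa = \sup\{|\L|^+, |\Psi(\bullet)|^+, |\Psi(\Acal_E)|^+\}$ collapses to $\omega$, we obtain a primitive positive first-order interpretation of $\Gra_{<\omega}$ in $\Cfrak$. That is, $\Cfrak$ interprets the class of all finite graphs, as required.

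I do not anticipate any real obstacle here: the proof is essentially bookkeeping once the witness objects for interpretability of $\Gra_{<\omega}$ have been identified, and the choices above are the evident ones. The only mild subtlety is the collapse of $\kappa$ to $\omega$, which relies on $\Psi$ landing in $\Cfrak_{<\omega}$ together with the implicit finiteness of $\L$ (so that $|\L|^+$ contributes no infinite term to the supremum).
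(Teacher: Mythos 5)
Your proof is correct and takes essentially the same route the paper intends: the Fact from \cite{pultr} supplies a full embedding $\Psi:\Gra_{<\omega}\to\Cfrak_{<\omega}$, the paper's own example after Definition~\ref{def:interpretable} ($\bullet$ a single vertex, $\Acal_E$ a directed edge) gives interpretability of $\Gra_{<\omega}$, and the remark following Theorem~\ref{th:interpret} then yields a finitary interpretation because the witness objects $\Psi(\bullet)$ and $\Psi(\Acal_E)$ are finite. Your closing caveat about $|\L|^+$ in the supremum matches the paper's implicit assumption, so nothing further is required.
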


Since the class of all finite graphs is unstable and stability is preserved by interpretations, \Cref{cor:omegauni} implies that all $\omega$-universal classes of relational structures are unstable. In fact more holds. If $\Cfrak$ is a subcategory of $\StrL$ such that there is a full embedding $\Pos_{<\omega} \to \Cfrak_{<\omega}$, then $\Cfrak$ has the \emph{strict universal-order property} as defined in \cite{rosario}. The assumption here is strictly weaker as the category of posets is not $\omega$-universal. Moreover, the implication is stronger as the strict universal-order property implies several model-theoretic wildness properties, including instability (see \cite{rosario}). 

Finally, together with \Cref{monotoneNIP}, the above have the following implication.

\begin{corollary}\label{cor:omuni}
    Let $\Cfrak$ be an $\omega$-universal subcategory of $\StrL$. Then $\Gaif(\Cfrak)$ is $\omega$-somewhere dense. 
\end{corollary}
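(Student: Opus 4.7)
The plan is to deduce \Cref{cor:omuni} directly by chaining \Cref{cor:omegauni} with \Cref{monotoneNIP}, arguing by contradiction. Suppose toward contradiction that $\Gaif(\Cfrak)$ is $\omega$-nowhere dense. Applying the first part of \Cref{monotoneNIP} to the class $\obj(\Cfrak)$ of $\L$-structures (no monotonicity is needed for this direction) yields that $\obj(\Cfrak)$ is stable.

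On the other hand, \Cref{cor:omegauni} provides a (finitary) interpretation of the class of all finite graphs in $\Cfrak$. Since stability is preserved under interpretations---precisely the Fact stated at the beginning of \Cref{sec:interpret}---it would follow that the class of all finite graphs is itself stable. This is the desired contradiction: the edge relation on finite graphs has the order property, as witnessed by arbitrarily large finite half-graphs, so the class of all finite graphs is unstable.

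Since both ingredients have already been established in the preceding sections, no serious obstacle remains. The only point worth checking is that the interpretation delivered by \Cref{cor:omegauni} is of the finitary $\L_{\omega,\omega}$ kind, so that the stability-preservation Fact genuinely applies. This is precisely how \Cref{cor:omegauni} is stated, since the assumption of $\omega$-universality feeds into \Cref{th:interpret} with the cardinal parameter $\kappa=\omega$, producing a standard first-order interpretation rather than an infinitary one.
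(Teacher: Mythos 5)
Your argument is correct and is essentially the paper's own: the paper likewise derives instability of $\obj(\Cfrak)$ from \Cref{cor:omegauni} together with the stability-preservation fact and the instability of finite graphs, and then applies the first part of \Cref{monotoneNIP} (in contrapositive form, where you phrase it as a contradiction). Your closing check that the interpretation is finitary matches the paper's remark preceding \Cref{cor:omegauni}.
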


The aim of the next section is to extend \Cref{cor:omuni} to $\kappa$-universal categories, for $\kappa$ some larger infinite cardinal. The argument for the $\omega$ case relies on \Cref{monotoneNIP}, which in turn makes use of various Ramsey-type properties of $\omega$. One would expect that the same argument could be salvaged if $\kappa$ has similar properties. However, this is not quite the case; that argument would require that $\kappa \rightarrow (\kappa)^\lambda_2$ for every $\lambda<\kappa$, and while this holds for $\omega$, it is inconsistent with the Axiom of Choice for larger infinite cardinals (see \cite{kanamori}, Proposition 7.1). A different argument is therefore employed. 

\section{Somewhere density in universal categories}\label{sec:swd}

We begin by noting that the satisfaction of primitive positive formulas depends on the independent satisfaction of each of their connected components. This simple lemma is instrumental in finding paths whenever a primitive positive formula performs any kind of combinatorial coding. 

\begin{lemma}\label{ppcomponents}
Let $\kappa$ be an infinite cardinal, $\phi(\bar x)$ a primitive positive $\L_{\kappa,\kappa}$ formula, and $\Mcal_\phi$ its canonical structure. Let $(G_\gamma)_{\gamma \in \lambda}$ be the connected components of $\Gaif(\Mcal_\phi)\setminus \bar x$, and $M_\gamma = \Mcal_\phi[G_\gamma,\bar x]$. Then for any $\L$-structure $A$ and tuple $\bar a \in A$
\begin{center}
    $A \models \phi(\bar a) \iff \forall \gamma \in \lambda \ \exists \text{ homomorphism } h_\gamma:(M_i,\bar x)\to(A,\bar a).$
\end{center}
\end{lemma}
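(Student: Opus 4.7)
The plan is to establish the two directions separately, with the forward direction handled by restriction and the backward direction by gluing along $\bar x$. For the forward direction, if $A \models \phi(\bar a)$, then by the correspondence between primitive positive formulas and their canonical structures recalled in the preliminaries, there is a homomorphism of pointed structures $h:(\Mcal_\phi,\bar x) \to (A,\bar a)$. Since $M_\gamma$ is the induced substructure of $\Mcal_\phi$ on the domain $G_\gamma \cup \bar x$, the restriction $h_\gamma := h \restr (G_\gamma \cup \bar x)$ is itself a homomorphism $(M_\gamma,\bar x) \to (A,\bar a)$, as required.

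For the converse, I would first record the key structural observation that every relation instance of $\Mcal_\phi$ is already contained in some $M_\gamma$. Indeed, suppose $(v_1,\dots,v_n) \in R^{\Mcal_\phi}$ for some $R \in \L$. If all $v_i$ lie in $\bar x$, this tuple belongs to every $M_\gamma$. Otherwise, by the definition of the Gaifman graph, all $v_j \notin \bar x$ are pairwise adjacent in $\Gaif(\Mcal_\phi)\setminus \bar x$, so they lie in a common connected component $G_\gamma$, and hence the tuple is contained in $M_\gamma = \Mcal_\phi[G_\gamma,\bar x]$. Now suppose we are given homomorphisms $h_\gamma:(M_\gamma,\bar x)\to (A,\bar a)$ for every $\gamma \in \lambda$. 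Since the $G_\gamma$ are pairwise disjoint and disjoint from $\bar x$, and since the $h_\gamma$ all agree on $\bar x$ (sending it to $\bar a$), we may define a single map $h:\Mcal_\phi \to A$ by setting $h(v) := h_\gamma(v)$ when $v \in G_\gamma$ and $h(x) := a$ when $x \in \bar x$ (with $a$ the corresponding entry of $\bar a$). The structural observation guarantees that any relation instance of $\Mcal_\phi$ is preserved by $h$ because it is preserved by some $h_\gamma$, so $h$ is a homomorphism of pointed structures, and invoking the canonical structure correspondence once more yields $A \models \phi(\bar a)$.

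I do not anticipate a substantive obstacle here, since the lemma is essentially a structural observation combined with gluing. The only point that warrants care is that $\phi$ is allowed to be an $\L_{\kappa,\kappa}$-formula, so the number of components $\lambda$ and the sizes of the $M_\gamma$ may be infinite; however, the correspondence between primitive positive formulas and canonical structures carries over to the infinitary setting without modification, and the gluing argument is purely set-theoretic and insensitive to cardinality.
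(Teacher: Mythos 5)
Your proof is correct and takes essentially the same route as the paper: glue the maps $h_\gamma$ along $\bar x$, using the observation that every relation tuple of $\Mcal_\phi$ has its non-$\bar x$ entries inside a single component $G_\gamma$, so the glued map preserves relations. You additionally spell out the easy forward direction (restriction to the induced substructures $M_\gamma$) and the case of tuples lying entirely inside $\bar x$, both of which the paper's proof leaves implicit.
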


\begin{proof}
Given a collection of homomorphisms $h_\gamma:(M_\gamma,\bar x)\to (A,\bar a)$ as above, their union $h = \bigcup_{\gamma\in\lambda} h_\gamma$ is a homomorphism $(\Mcal_\phi,\bar x)\to (A,\bar a)$. This is well-defined as the components $G_\gamma$ are disjoint, and hence $M_\gamma\cap M_\delta=\bar x$ for $\gamma\neq \delta$, while $\bar x$ has the same image under all the $h_\gamma$.  Also, if $(v_1, \dots, v_n) \in R^{\Mcal_\phi}$ for some $R \in \L$, then the elements in $\bar v \setminus \bar x$ lie in the same connected component of $\Gaif(\Mcal_\phi)\setminus \bar x$, say $G_\alpha$. Since $h_\alpha$ is a homomorphism, it follows that $h_\alpha(\bar v)=h(\bar v) \in R^A$. 
\end{proof}

With this, we argue that whenever there is a full embedding $\Card \to \Cfrak$ from the category of cardinals viewed as $\in$-structures with order-preserving maps, then $\Gaif(\Cfrak)$ is totally somewhere dense. Observe that the assumption is strictly weaker than algebraic universality, as the category of cardinals is not algebraically universal. On the other hand its monotone closure is, as argued in \Cref{sec:wellfdd}. This gap reflects the fact that nowhere density is essentially a property of the monotone closure of a class. 

First, we show this relativised to categories of structures of size bounded by an inaccessible cardinal. Recall that a cardinal is said to be inaccessible if it is uncountable, a strong limit, and regular, i.e.\ equal to its cofinality. The existence of inaccessibles is independent of $\ZFC$ (see \cite{kanamori}). Since algebraic universality is not equivalent to $\kappa$-universality for all cardinals $\kappa$, we choose a relativised proof to exhibit exactly those levels where our theorem holds. Nonetheless, it will become apparent that this also holds without the relativisation to inaccessibles. For clarity, the proof is divided into a sequence of claims. 

\begin{theorem}\label{th:smwdense}
    Let $\kappa$ be an inaccessible cardinal and $\Cfrak$ a subcategory of $\StrL$ such that $|\L|<\kappa$ and there is a full embedding $\Psi:\Card_{<\kappa} \to \Cfrak_{<\kappa}$. Then $\Gaif(\Cfrak)$ is $\kappa$-somewhere dense. 
\end{theorem}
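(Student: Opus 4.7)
The plan is to first leverage the full embedding to extract a primitive positive interpretation of $\Card_{<\kappa}$ in $\Cfrak$, and then combinatorially mine this interpretation. First, I would observe that the subcategory $\Card_{<\kappa}$ of $\Str(\{\in\})$ is interpretable in the sense of \Cref{def:interpretable}: the ordinal $1$ serves as $\bullet$, the ordinal $2$ serves as $\Acal_{\in}$, and the two order-preserving maps $1\to 2$ serve as $h_0, h_1$. Applying \Cref{th:interpret} to $\Psi$ thus yields a primitive positive $\L_{\kappa',\kappa'}$-interpretation $(\phi_\bullet(\bar x), \psi_\in(\bar y_0, \bar y_1))$ of $\Card_{<\kappa}$ in $\Cfrak$, where $\kappa' = \sup\{|\L|^+, |\Psi(1)|^+, |\Psi(2)|^+\}$ is strictly less than $\kappa$ by inaccessibility. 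Writing $\mu = |\Psi(1)|$, each $\Psi(\lambda)$ then carries tuples $f_i \in \Psi(\lambda)^\mu$ for $i<\lambda$ satisfying $\phi_\bullet$, with $\psi_\in(f_i, f_j)$ holding iff $i<j$.

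Next I would extract a walk template. By \Cref{ppcomponents}, $\psi_\in$ decomposes as $\bigwedge_\gamma \phi_\gamma$ along the connected components of $\Gaif(\Mcal_{\psi_\in})\setminus\{\bar y_0,\bar y_1\}$. If no $\phi_\gamma$ involved variables from both $\bar y_0$ and $\bar y_1$, then $\psi_\in$ would factor as $\psi_a(\bar y_0)\wedge \psi_b(\bar y_1)$; a short case analysis (applying $\psi_\in$ to both $(f_i, f_j)$ and $(f_j, f_i)$ for $i<j$) shows that this cannot express the strict order. Hence some primitive positive $\phi^*$ implied by $\psi_\in$ has connected canonical structure and free variables containing some $y_0(\alpha)$ and $y_1(\beta)$. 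Fix a shortest path of finite length $r+1$ in $\Gaif(\Mcal_{\phi^*})$ between these, so that for every $i<j$ in $\lambda$, any witness for $\phi^*(f_i, f_j)$ transports this template to a walk of length $r+1$ from $f_i(\alpha)$ to $f_j(\beta)$ in $\Gaif(\Psi(\lambda))$.

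The main obstacle is that potentially $\alpha \neq \beta$: the uniform choice $v_i = f_i(\alpha)$ of clique vertices gives walks ending at $f_j(\beta)\neq v_j$. I would sidestep this using the primitive positive formula $\Phi(\bar y_0,\bar y_1,\bar y_2) := \psi_\in(\bar y_0,\bar y_2) \wedge \psi_\in(\bar y_1, \bar y_2)$, witnessed by any triple with $i_0, i_1 < i_2$. Concatenating the two resulting template walks yields, for each such triple, a walk of length $2(r+1)$ in $\Gaif(\Psi(\lambda))$ from $f_{i_0}(\alpha)$ to $f_{i_1}(\alpha)$ passing through $f_{i_2}(\beta)$; both endpoints now share the index $\alpha$. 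To guarantee that the selected clique vertices are distinct, I would use the pigeonhole bound $\lambda_0^\mu<\kappa$ — valid because $\kappa$ is a strong limit — to find, after enlarging $\lambda$, an index $\alpha^*$ with $|\{f_i(\alpha^*):i<\lambda\}|\geq \lambda_0$, and then verify via the same case analysis as in paragraph two that $\alpha^*$ must lie in the $\bar y_0$-support of some connecting component, so that the template walk can be re-routed with left endpoint $y_0(\alpha^*)$.

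Finally, given a target $\lambda_0<\kappa$, choose $\lambda<\kappa$ large enough for the Erdős–Rado bounds of \Cref{cor:erdos}. Apply the above pigeonhole to extract $X'\subseteq \lambda$ on which $i\mapsto f_i(\alpha)$ is injective, partition $X'$ into a clique-vertex set $X$ of size $\lambda_0$ and a reservoir $Y$ of witness indices above $X$, and assign to each pair $\{i_0, i_1\}\in [X]^2$ a distinct witness $i_2 \in Y$. Invoking \Cref{lem:compatible} on the interior vertices of the resulting length-$2(r+1)$ walks yields a further thinning on which these walks are pairwise internally vertex-disjoint. The configuration then exhibits $K_{\lambda_0}^{2r+1}$ as a subgraph of $\Gaif(\Psi(\lambda))$; as $\lambda_0<\kappa$ was arbitrary and $2r+1$ is fixed, $\Gaif(\Cfrak)$ is $\kappa$-somewhere dense. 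The delicate bookkeeping of the third paragraph — resolving the endpoint mismatch via $\Phi$ while securing an injective clique-vertex representation through the strong-limit pigeonhole — is where the main work lies.
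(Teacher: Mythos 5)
Your opening moves coincide with the paper's (interpretability of $\Card_{<\kappa}$, \Cref{th:interpret}, and a connectivity argument in the spirit of \Cref{ppcomponents} to find a template path joining a $\bar y_0$-variable to a $\bar y_1$-variable), and your trick of concatenating two copies of $\psi_\in$ through a common larger witness index is a perfectly good substitute for the paper's final step (where a subdivided half-graph is converted into a subdivided clique). The gap is in the middle: you invoke \Cref{lem:compatible} to make the walks internally disjoint, but that lemma has a nontrivial hypothesis — for each \emph{fixed} interior position of the template, the values assigned for distinct pairs must already be pairwise distinct — and nothing in your argument establishes it. The existential witnesses for $\psi_\in(f_i,f_j)$ may assign to an interior template variable a value that is constant, or depends only on $i$, or only on $j$; in that case the walks for different pairs meet at that position (possibly all of them passing through a single vertex), and no thinning of the index set in the style of \Cref{lem:compatible} can repair this. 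This is exactly what the paper's Claims \ref{th1:claim1}--\ref{thm1:claim3} are for: first the Erd\H{o}s--Rado theorem forces a uniform equality type on $(f_i,f_j,f_k,f_\ell,h_{i,j},h_{k,\ell},h_{j,k},h_{j,\ell})$; then the connecting path is built inside the Gaifman graph \emph{with the coordinates that are constant across pairs deleted} (your factorisation observation only yields a connected component meeting both $\bar y_0$ and $\bar y_1$, not a path avoiding degenerate coordinates — the stronger statement needs the witness-mixing argument, which itself uses the uniform equality type); and finally the canonical Erd\H{o}s--Rado theorem classifies each interior coordinate into the four canonical types and the path is trimmed between the last type-2 and the first type-3 position so that all interior coordinates are of type 4. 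Only after this preprocessing is the hypothesis of \Cref{lem:compatible} available. Relatedly, within a single pair your transported walk need not be a path of uniform length (repeated vertices must be excised uniformly across pairs, which again uses the constant equality type), and you never rule out interior vertices colliding with native vertices (the paper's fourth claim).

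A secondary problem is your patch for distinctness of the native vertices. The strong-limit pigeonhole does produce a coordinate $\alpha^*$ at which the $f_i$ take at least $\lambda_0$ distinct values, but your assertion that $\alpha^*$ ``must lie in the $\bar y_0$-support of some connecting component'' is unjustified: a high-multiplicity coordinate can sit in a component of the canonical structure touching only $\bar y_0$, or in no relation at all, and then it supports no walk to $\bar y_1$. In the paper this issue does not arise, because once the equality types are uniformised every coordinate is either constant or injective across the (thinned) index set, and the endpoints $\alpha,\beta$ of the constructed path are injective by construction (they avoid the set $S$), so distinctness of the native vertices comes for free. So the overall architecture of your proposal is sound and the endgame is a nice variant, but as written the disjointness of the subdivision paths — the heart of the proof — is not established, and the canonical Ramsey analysis that secures it cannot be skipped.
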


\begin{proof}
    Since $\Card_{<\kappa}$ is interpretable, it follows by the assumption and \Cref{th:interpret} that there is some primitive positive $\L_{\kappa,\kappa}$ interpretation of $\Card_{<\kappa}$ in $\Cfrak_{<\kappa}$. So, there is a formula $\phi(\bar x,\bar y)=\exists \bar w \psi(\bar x,\bar y,\bar w)$ where $\bar x$ and $\bar y$ are enumerated by $\mu <\kappa$, and for every cardinal $\lambda < \kappa$  some $M_\lambda \in \obj(\Cfrak)$ and functions $(f_i)_{i \in \lambda}$ from $(M_\lambda)^\mu$ such that
    \[  M_\lambda \models \phi(f_i,f_j) \iff i< j \quad (\star)\]

    For clarity, we write $f_i^\lambda$ to highlight that these come from $(M_\lambda)^\mu$. Enumerate the variables in $\bar w$ by $\nu < \kappa$, and pick existential witnesses $h^\lambda_{i,j}$ from $(M_\lambda)^\nu$ such that 
    \[ i < j \implies M_\lambda \models \psi(f^\lambda_i,f^\lambda_j,h^\lambda_{i,j}). \]

    For $i<j<k<\ell\in\lambda$, let $\Delta_\lambda(i,j,k,\ell)$ be the equality type of the tuple $(f^\lambda_i,f^\lambda_j,f^\lambda_k,f^\lambda_\ell,h^\lambda_{i,j},h^\lambda_{k,\ell},h^\lambda_{j,k},h^\lambda_{j,\ell})$, that is, the set of atomic formulas $\eta(\bar x_1,\bar x_2,\bar x_3,\bar x_4,\bar w_1,\bar w_2,\bar w_3,\bar w_4)$ using only the equality symbol such that $M_\lambda \models \eta(f^\lambda_i,f^\lambda_j,f^\lambda_k,f^\lambda_\ell,h^\lambda_{i,j},h^\lambda_{k,\ell},h^\lambda_{j,k},h^\lambda_{j,\ell})$. 

\begin{claim}\label{th1:claim1}
    We may assume that $\Delta_\lambda(i,j,k,\ell)$ is constant for all $\lambda<\kappa$ and $i<j<k<\ell \in \lambda$. 
\end{claim}

\begin{claimproof}
    Let $\delta=2^{\max\{\mu,\nu\}}$ and $r(\lambda)=R(4,\lambda,\delta)$. Observe that there are at most $\delta$ many equality types, while $\delta$ and $f(\lambda)$ are strictly smaller than $\kappa$ since $\kappa$ is a strong limit. Therefore, starting from $M_{r(\lambda)}$ we may colour quadruples $i<j<k<\ell \in r(\lambda)$ according to $\Delta_\lambda$. By \Cref{cor:erdos}, we may find set $I\subseteq r(\lambda)$ of order type $\lambda$ which is monochromatic in this colouring. We may therefore pass to the subsequence $(M_{r(\lambda)})_{\lambda <\kappa}$ and relabel, so that without loss of generality $\Delta_\lambda(i,j,k,\ell)$ is constant for all $i<j<k<\ell \in \lambda$. Furthermore, since the number of equality types is at most $\delta<\kappa$ and $\kappa$ is regular, it follows by the pigeonhole principle that there is a set $X \subseteq \kappa$ of size $\kappa$ such that $\Delta_{\lambda}$ is constant for all $\lambda \in X$. After passing to the subsequence $(M_{\lambda})_{\lambda \in X}$ and relabelling, we may finally assume that $\Delta_\lambda(i,j,k,\ell)$ is constant for all $i<j<k<\ell \in \lambda$ and all $\lambda<\kappa$. 
\end{claimproof}

Henceforth, we unambiguously write $\Delta$ for the above. Let $S=\{s \in \mu : (\bar x_1(s) = \bar x_2(s)) \in \Delta\}$, and $T = \{t \in \nu : (\bar w_1(t) = \bar w_2(t)) \in \Delta\}$. Observe that by definition these satisfy
 \[f_i^\lambda(s) = f^\lambda_j(s) \iff s \in S;\]
 \[h^\lambda_{i,j}(t) = h^\lambda_{k,\ell}(t) \iff t \in T,\]
 for all $i<j<k<\ell \in \lambda$.

\begin{claim}\label{th1:claim2}
    There is $m \in \omega$ such that for all $\lambda \in \kappa$ and $i<j \in \lambda$ there are injective maps $p^\lambda_{i,j}:m+2 \to M_\lambda$ satisfying the following for all $i<j$ and $k<\ell$ from $\lambda$:
    \begin{enumerate}[i.]
         \item\label{cond:iv} $p^{\lambda}_{i,j}(0),p^{\lambda}_{i,j}(1),\dots,p^{\lambda}_{i,j}(m+1)$ is a path in $\Gaif(M_\lambda)$;
        \item\label{cond:i} $p^{\lambda}_{i,j}(0)=p^{\lambda}_{k,\ell}(0)\iff i=k$;
        \item\label{cond:ii} $p^{\lambda}_{i,j}(m+1)=p^{\lambda}_{k,\ell}(m+1)\iff j=\ell$;
        \item\label{cond:iii} $p^{\lambda}_{i,j}(n)\neq p^{\lambda}_{k,\ell}(n)$ for $i<j<k<\ell$ and $n \in [m]$;
        \item\label{cond:v} the equality type of ${p^{\lambda}_{i,j}}^\frown {p^{\lambda}_{k,\ell}}^\frown {p^{\lambda}_{j,k}}^\frown p^{\lambda}_{j,\ell}$ is constant for $i<j<k<\ell<\lambda$.
  \end{enumerate}
\end{claim}

\begin{claimproof}
    We first argue that there are $\alpha,\beta \in \mu\setminus S$ and a path in $\Gaif(\Mcal_\phi)$ from $\bar x(\alpha)$ to $\bar y(\beta)$ passing only through the elements in $\bar w$, and avoiding the variables $\bar x(s)$, $\bar y(s)$ for $s \in S$, and $\bar w(t)$ for $t \in T$. Indeed, consider $G = \Gaif(\Mcal_\phi)\setminus (\{ \bar x(s),\bar y(s) : s \in S \}\cup \{\bar w(t) :t \in T\})$, $N_x = N^G(\{\bar x(s):s\notin S\})$, $N_y=N^G(\{\bar y(s): s\notin S\})$. Assume for a contradiction that $N_x$ and $N_y$ are disjoint. Since $M_\lambda \models \psi(f^\lambda_1,f^\lambda_2,h^\lambda_{1,2})\land \psi(f^\lambda_3,f^\lambda_4, h^\lambda_{3,4}$), it follows that $M \models \phi(f^\lambda_3, f^\lambda_2)$ by Lemma \ref{ppcomponents}. This contradicts $(\star)$. Consequently, $N_x\cap N_y \neq \emptyset$, and so there are $\alpha,\beta \in \mu\setminus S$ such that there is a path in $G$ from $\bar x(\alpha)$ to $\bar y(\beta)$, as required. 

    Let $P \subseteq \nu \setminus T$ be the finite set of indices of the variables in $\bar w$ that appear on this path, and write $m=|P|$. Without loss of generality, we may assume that the path above starts at $\bar x(\alpha)$ and finishes at $\bar y(\beta)$, passing only through variables $\bar w(p)$ for $p \in P$. Indeed, if the current path does not satisfy this, there are $u'$ and $v'$ in $\mu$ and a sub-path starting from $\bar x(u')$ and finishing at $\bar y(v')$ passing only through variables in $\bar w$. We may then consider this sub-path and relabel appropriately. 

    Let $P=\{p_1,\dots,p_m\}$ be the enumeration of $P$ in the order of the corresponding path $\bar x(\alpha),\bar w(p_1),\dots,\bar w(p_m),\bar y(\beta)$ in $\Gaif(\Mcal_\phi)$. It follows that 
    $f_i^\lambda(\alpha),h^\lambda_{i,j}(p_1),\dots,h^\lambda_{i,j}(p_m),f^\lambda_j(\beta)$ is a path in $\Gaif(M_\lambda)$ for all $\lambda<\kappa$ and $i<j <\lambda$. Notice that $f_i^\lambda(\alpha)\neq f_j^\lambda(\beta)$; indeed if that was the case then for any $j'>j$ it would hold that $f_j^\lambda(\beta)=f_{j'}^\lambda(\beta)$ by \Cref{th1:claim1}, contradicting that $\beta \notin S$. So, without loss of generality we may assume that the above path contains no repeated vertices; otherwise we may consider a proper sub-path from $f_i^\lambda(\alpha)$ to $f^\lambda_j(\beta)$, and since all such paths have the same equality type by \ref{th1:claim1}, exactly the same sub-path may be considered for all $i<j$ and $\lambda <\kappa$. 
    Hence, for all $\lambda<\kappa$ and pairs $i<j$ from $\lambda$ define the injective maps 
    \begin{align*}
        p^\lambda_{i,j}:m+2 &\to M_\lambda   \\
        0 &\mapsto f_i^{\lambda}(\alpha)   \\
        n \in [m] &\mapsto h_{i,j}^\lambda(p_n)   \\
        m+1 &\mapsto f_i^{\lambda}(\beta).
    \end{align*}
    Condition (\ref{cond:iv}) trivially holds, and moreover, since $\alpha,\beta \notin S$, conditions (\ref{cond:i}) and (\ref{cond:ii}) hold. Also, condition (\ref{cond:iii}) holds since $P \subseteq \nu \setminus T$,
    while condition (\ref{cond:v}) holds by \Cref{th1:claim1}.  
\end{claimproof}

We proceed to show that we may strengthen (\ref{cond:iii}). 

\begin{claim}\label{thm1:claim3}
    We may assume that $p^\lambda_{i,j}(n_1) \neq p^\lambda_{k,\ell}(n_2)$, for all $n_1,n_2 \in [m]$ and any pairs $(i,j)\neq (k,\ell)$ with $i<j$ and $k<\ell$ from $\lambda$. 
\end{claim}

\begin{claimproof}
   Define colourings $\chi_{\lambda,n}(i,j) = p^\lambda_{i,j}(n)$ for each $n \in m+2$ and $\lambda<\kappa$. Having started with a suitably large $\rho(\lambda)<\kappa$, we may iterate the canonical Erd{\"o}s-Rado theorem from \Cref{cor:erdos} $(m+2)$ times using these colourings. Indeed, $\rho(\lambda)=\beth_\omega(\lambda)$ works, since $\cf(\kappa)>\omega$ implies that $\rho(\lambda)<\kappa$. Relabelling once more, we therefore have a map $ t_\lambda: [m+2] \to [4]$ for every $\lambda<\kappa$ such that whether $p^\lambda_{i,j}(n)=h^\lambda_{k,\ell}(n)$ depends on the canonical case $t_\lambda(n)$. Since $4^{m+2}$ is finite, it follows by the pigeonhole principle that there is a a set $X \subseteq \kappa$ of size $\kappa$ such that $t_{\lambda}$ is the same for all $\lambda \in X$. By restricting to the subsequence $(M_{\lambda})_{\lambda \in X}$ and relabelling, we may assume that $t_\lambda$ is the same for all $\lambda<\kappa$. Write $t:[m+2] \to [4]$ for this map. 
    
    Observe that conditions (\ref{cond:i}) and (\ref{cond:ii}) ensure that $t(0)=2$ and $t(m+1)=3$. Moreover, $t(n)\neq 1$ for all $n \in [m]$. If not, then for some $n \in [m]$ it holds that $p^\lambda_{i,j}(n)=p^\lambda_{k,\ell}(n)$ for all $i<j$ and $k<\ell$; however $h^\lambda_{i,j}(p) \neq h^\lambda_{k,l}(p)$ for any $i<j<k<\ell$ by condition (\ref{cond:iii}). Clearly, the claim follows if $t(n)=4$ for all $n \in [m]$. 

    So, suppose that there is at least one $n \in [m]$ that falls into the canonical case $(2)$, i.e.\ $t(n)=2$, and let $c \in [m]$ be the greatest such that $n$. It follows that $p^\lambda_{i,j}(c)=h^\lambda_{k,\ell}(c)$ if, and only if, $i=k$. Since $t(m+1)=3$, there exists a least $d>c$ such that $t(d)=3$. Then, define $\tilde p^\lambda_{i,j}:(d-c+1)\to M_\lambda$ by $n \mapsto p^\lambda_{i,j}(n+c)$; since $t(d)=3$, $p^\lambda_{i,j}(d)=p_{k,\ell}^\lambda(d)$ if and only if $j=\ell$, and so the maps $\tilde p^\lambda_{i,j}$ satisfy the conditions of \Cref{th1:claim2}. Since $t(n)=4$ for all $n \in [d-c-1]$, the claim follows in this case by considering the maps $\tilde p^\lambda_{i,j}$. Finally, the case where there is at least one $n \in [m]$ that falls into Case $(3)$ is handled symmetrically.

    We have thus ensured that $p^\lambda_{i,j}(n) \neq p^\lambda_{k,\ell}(n)$, for all $n \in [m]$ and any pairs $(i,j)\neq (k,\ell)$ with $i<j$ and $k<\ell$ from $\lambda$. Consequently, \Cref{lem:compatible} implies that for all $\lambda<\kappa$ there is a set $I_\lambda \subseteq \lambda$ of order type $\lambda$ such that $p^\lambda_{i,j}(n_1)\neq p^\lambda_{k,\ell}(n_2)$ for all $n_1$ and $ n_2$ from $[m]$ and all pairs $(i,j)\neq (k,\ell)$ with $i<j$ and $k<\ell$ from $I_\lambda$. The claim thus follows by restricting on these sets and relabelling appropriately. 
\end{claimproof}

\begin{claim}
    The maps $p^\lambda_{i,j}$ satisfy $p^\lambda_{i,j}(0)\neq p_{k,\ell}^\lambda(n)$ and $p^\lambda_{i,j}(m+1)\neq p_{k,\ell}^\lambda(n)$ for all $n \in [m]$ and all pairs $0<i<j$ and $0<k<\ell$ from $\lambda$. 
\end{claim}

\begin{claimproof}
    We first argue for the former. Assume for a contradiction that  $p^\lambda_{i,j}(0)=p^\lambda_{k,\ell}(n)$ for some $n \in [m]$ and $i<k<\ell$. Then $p^\lambda_{i,j}(0)=p^\lambda_{i,k}(0)=p^\lambda_{k,\ell}(n)$, and so $p^\lambda_{i,k}(0)=p^\lambda_{k,\ell+1}(n)$ by condition (\ref{cond:v}). It follows that $p^\lambda_{k,\ell}(n)=p^\lambda_{k,\ell+1}(n)$, contradicting \Cref{thm1:claim3}. Likewise, if $p^\lambda_{i,j}(0)=p^\lambda_{k,\ell}(n)$ for some $n \in[m]$ and $k<i<\ell$, then $p^\lambda_{i,j}(0)=p^\lambda_{i,\ell}(0)=p^\lambda_{k,\ell}(n)$ and so it holds that $p^\lambda_{k,\ell}(n)=p^\lambda_{i,\ell}(0)=p^\lambda_{0,\ell}(n)$ by condition (\ref{cond:v}). Again, this contradicts \Cref{thm1:claim3}. Finally, if $p^\lambda_{i,j}(0)=p^\lambda_{k,\ell}(n)$ for some $n \in [m]$ and $k<\ell<i$, then it holds that $p^\lambda_{k,\ell}(n)=p^\lambda_{i,j}(0)=p^\lambda_{0,\ell}(n)$ by condition (\ref{cond:v}), contradicting \Cref{thm1:claim3} once more. The latter claim follows analogously by analysing the cases $j<k<\ell$, $k<j<\ell$, and $k<\ell<j$. 
\end{claimproof}
 
Consider the elements $a^\lambda_i := p^\lambda_{i,j}(0) \in M_\lambda$ and $b^\lambda_i:=p^\lambda_{0,i}(m+1) \in M_\lambda$ for $0<i<j$. These are well-defined by conditions (\ref{cond:i}) and (\ref{cond:ii}). Notice that $a^\lambda_i \neq b^\lambda_j$ for any $i<j$ since each $p^\lambda_{i,j}$ is injective. Moreover, $a^\lambda_i \neq b^\lambda_j$ for $j<i$; if this was the case then $p^\lambda_{0,j}(m+1)=p^\lambda_{i,i+2}(0)$, and so condition (\ref{cond:v}) would imply that $p^\lambda_{i,i+2}(0)=p^\lambda_{x_\lambda,j}(m+1)=p^\lambda_{i+1,i+2}(0)$, contradicting condition (\ref{cond:ii}). Finally, there are two cases to distinguish depending on whether $a^\lambda_i = b^\lambda_i$ or not. 

If $a^\lambda_i=b^\lambda_i$ for some $\lambda<\kappa$ and $i \in \lambda$ then it follows by \Cref{th1:claim1} that $a^\lambda_i=b^\lambda_i$ for all $\lambda<\kappa$ and all $i\in \lambda$. Hence, for all $\lambda<\kappa$, the elements $a^\lambda_j := p_{i,j}^\lambda(m+1)$ are the native vertices of an $m$-subdivided clique of size $\lambda$ in $\Gaif(M_\lambda)$. Indeed, we have ensured that $a^\lambda_i\neq a^\lambda_j$ for all $i<j$, while the maps $p^\lambda_{i,j}$ specify pairwise disjoint paths of length $m+1$ from $a^\lambda_i$ to $a^\lambda_j$. Since this holds for all $\lambda<\kappa$, this implies that $\Gaif(\Cfrak)$ is $\kappa$-somewhere dense. On the other hand, if $a^\lambda_i\neq b^\lambda_i$ for all $\lambda<\kappa$ and all $i\in \lambda$, then the elements $a^\lambda_i$ and $b^\lambda_j$ are the native vertices of an $m$-subdivided half-graph of size $\lambda$ in $\Gaif(M_\lambda)$. Since any $m$-subdivided half-graph of size $\lambda^+$ contains 
a $2m$-subdivided clique of size $\lambda$ and $\kappa$ is a limit cardinal, it follows that $\Gaif(\Cfrak)$ is $\kappa$-somewhere dense. 
\end{proof}

\begin{corollary}
    Let $\kappa$ be an inaccessible cardinal and $\Cfrak$ a $\kappa$-universal subcategory of $\StrL$ such that $|\L|<\kappa$. Then $\Gaif(\C)$ is $\kappa$-somewhere dense. 
\end{corollary}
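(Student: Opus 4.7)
The plan is to reduce the corollary to \Cref{th:smwdense} by factoring a full embedding $\Card_{<\kappa} \to \Cfrak_{<\kappa}$ through $\Gra_{<\kappa}$. By the fact attributed to \cite{pultr} stated in the preliminaries, $\kappa$-universality of $\Cfrak$ gives us a fully faithful functor $\Phi:\Gra_{<\kappa} \to \Cfrak_{<\kappa}$, so it suffices to exhibit a full embedding $\Card_{<\kappa} \to \Gra_{<\kappa}$ and compose.

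For this, I would consider the functor $J:\Card_{<\kappa} \to \Gra_{<\kappa}$ that sends each cardinal $\lambda<\kappa$ to the graph $(\lambda,\in)$, i.e.\ the graph on vertex set $\lambda$ whose edge relation is $\{(\alpha,\beta)\in \lambda\times\lambda : \alpha \in \beta\}$, and acts as the identity on underlying maps. A graph homomorphism $f:(\lambda,\in)\to(\mu,\in)$ is precisely a function such that $\alpha < \beta$ implies $f(\alpha) < f(\beta)$, which coincides exactly with the morphisms of $\Card$ viewed as $\in$-structures with order-preserving maps. Thus $J$ is fully faithful, and evidently maps into $\Gra_{<\kappa}$.

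Composing, $\Phi \circ J: \Card_{<\kappa} \to \Cfrak_{<\kappa}$ is a full embedding. Since $|\L|<\kappa$ and $\kappa$ is inaccessible, the hypotheses of \Cref{th:smwdense} are satisfied, so $\Gaif(\Cfrak)$ is $\kappa$-somewhere dense, as required. There is no real obstacle here, as the whole work has already been carried out in the proof of \Cref{th:smwdense}; the only thing one needs to observe is the trivial categorical reduction from $\kappa$-universality (defined via $\Gra$) to the existence of a full embedding from $\Card_{<\kappa}$, which is immediate from the fact that cardinals form a full subcategory of $\Gra$ when viewed as $\in$-structures.
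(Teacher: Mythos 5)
Your proposal is correct and is essentially the paper's (implicit) argument: the paper treats the corollary as immediate from Theorem \ref{th:smwdense} together with the cited fact that $\kappa$-universality yields a fully faithful functor $\Gra_{<\kappa}\to\Cfrak_{<\kappa}$, the point being exactly that $\Card_{<\kappa}$ sits inside $\Gra_{<\kappa}$ as a full subcategory via $\lambda\mapsto(\lambda,\in)$, since $\in$-homomorphisms are precisely the strictly order-preserving maps. Composing the two full embeddings and invoking the theorem, as you do, is the intended proof.
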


Note that the proof of \Cref{th:smwdense} given above relies on \Cref{pigeonhole} and \Cref{cor:erdos}. Both of these hold for unbounded $\kappa$. The unbounded version of \Cref{cor:erdos} is precisely the Erd{\"o}s-Rado theorem (\Cref{erdos}), while the unbounded version of \Cref{pigeonhole} is the following trivial statement.

\begin{fact}\label{fact:pigeon}
    Let $\lambda$ be a cardinal, and $\phi:\Card \to \lambda$ a class function. Then there is some $\mu<\lambda$ such that $\phi^{-1}[\mu]$ is unbounded, i.e.\ for all cardinals $\alpha$ there is some $\beta>\alpha$ such that $\phi(\beta)=\mu$. 
\end{fact}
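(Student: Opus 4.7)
The plan is to prove the contrapositive via replacement: if every fiber $\phi^{-1}[\mu]$ is bounded in $\mathbf{Card}$, we derive a contradiction by constructing a cardinal whose image under $\phi$ has nowhere to go. This is essentially the proper-class analogue of the standard pigeonhole argument; the only subtlety is making sure all objects involved are sets so that $\ZFC$ admits the construction.

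Concretely, I would argue as follows. Suppose toward a contradiction that for every $\mu < \lambda$ the fiber $\phi^{-1}[\mu]$ is a bounded subclass of $\mathbf{Card}$, meaning there exists a cardinal $\gamma$ with $\phi(\beta) \neq \mu$ for all cardinals $\beta > \gamma$. For each $\mu < \lambda$ let $\alpha_\mu$ denote the least such cardinal; this is a canonical choice using only the well-ordering of the cardinals, so no appeal to Global Choice is needed. The map $\mu \mapsto \alpha_\mu$ is a function with domain the set $\lambda$, so by the Axiom of Replacement its image $A := \{\alpha_\mu : \mu < \lambda\}$ is a set of cardinals, and hence admits a supremum $\alpha^\ast := \sup A$ which is an ordinal.

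Now pick any cardinal $\beta$ strictly greater than $\alpha^\ast$ (for instance $|\alpha^\ast|^+$, which exists in $\ZFC$). Since $\phi:\mathbf{Card}\to\lambda$, we have $\phi(\beta) = \mu_0$ for some $\mu_0 < \lambda$. But then $\beta > \alpha^\ast \geq \alpha_{\mu_0}$, contradicting the defining property of $\alpha_{\mu_0}$ as a bound for $\phi^{-1}[\mu_0]$. This contradiction shows that at least one fiber must be unbounded, which is the required conclusion.

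The only step that needs care is the passage from a hypothetical class of bounds to an actual set: this is precisely what Replacement delivers once the bounds are chosen canonically (as least cardinals) rather than by arbitrary selection. Everything else is routine, so there is no real obstacle beyond bookkeeping.
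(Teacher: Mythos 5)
Your proof is correct: the contradiction argument via least bounds $\alpha_\mu$, Replacement, and a cardinal above $\sup_{\mu<\lambda}\alpha_\mu$ is exactly the standard argument, and the appeal to Replacement with the canonical (least) choice of bound is the right way to handle the class-function subtlety. The paper states this fact without proof as a triviality, and your argument is the intended one, so there is nothing to add.
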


Hence, the proceeding corollary follows by using the full Erd{\"o}s-Rado theorem and the above fact in place of \Cref{cor:erdos} and \Cref{pigeonhole} respectively in the proof of \Cref{th:smwdense}.

\begin{corollary}\label{cor:swd}
    Let $\Cfrak$ be an algebraically universal subcategory of $\StrL$. Then $\Gaif(\C)$ is totally somewhere dense. 
\end{corollary}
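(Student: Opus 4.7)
The plan is to lift the proof of \Cref{th:smwdense} to the class level, using the unbounded Ramsey tools \Cref{erdos} and \Cref{fact:pigeon} in place of the bounded tools \Cref{cor:erdos} and \Cref{pigeonhole}, as indicated in the paragraph preceding the corollary. First, I would obtain a class-level full embedding $\Psi : \Card \to \Cfrak$: algebraic universality provides a full embedding $\Alg(\Delta) \to \Cfrak$ for any $\Delta$ with $\sum\Delta \geq 2$, and classical results of the Prague school \cite{pultr} give a class-level full embedding of $\Gra$ into any such $\Alg(\Delta)$; composing and restricting to the full subcategory $\Card \subseteq \Gra$ yields $\Psi$. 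Since $\Card$ is interpretable in the sense of \Cref{def:interpretable} (with $\bullet = 1$, $\Acal_\in = 2$, and the two order-preserving inclusions $1 \hookrightarrow 2$ as $h_0, h_1$), applying \Cref{th:interpret} to $\Psi$ produces a single primitive positive $\L_{\kappa,\kappa}$-interpretation of $\Card$ in $\Cfrak$ for some cardinal $\kappa$ depending only on $\Psi$. In particular there is a fixed primitive positive formula $\phi(\bar x, \bar y) = \exists \bar w \, \psi(\bar x, \bar y, \bar w)$ and, for every cardinal $\lambda$, some $M_\lambda \in \obj(\Cfrak)$ together with tuples $(f^\lambda_i)_{i \in \lambda}$ and existential witnesses $h^\lambda_{i,j}$ satisfying $M_\lambda \models \phi(f^\lambda_i, f^\lambda_j) \iff i < j$.

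Next, I would reproduce the four claims in the proof of \Cref{th:smwdense} essentially verbatim, substituting \Cref{erdos} for \Cref{cor:erdos} and \Cref{fact:pigeon} for \Cref{pigeonhole} at each use. For the analogue of \Cref{th1:claim1}, at each cardinal $\lambda$ one applies \Cref{erdos} to $M_{\beth_4(\lambda)^+}$ to obtain a subset of size $\lambda$ with constant equality type $\Delta_\lambda$; since there are at most $2^{\max\{\mu,\nu\}}$ such types, \Cref{fact:pigeon} then passes to an unbounded subclass of cardinals on which $\Delta_\lambda$ takes a single fixed value. The analogues of \Cref{th1:claim2}, \Cref{thm1:claim3}, and the subsequent compatibility claim proceed by the same substitutions, yielding for every cardinal $\lambda$ a system of injective maps $p^\lambda_{i,j} : m + 2 \to M_\lambda$ tracing paths of length $m + 1$ in $\Gaif(M_\lambda)$ and satisfying all the required compatibility conditions.

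The decisive feature is that the length $m$ is extracted from the single fixed formula $\phi$ (namely, as the length of a path in $\Gaif(\Mcal_\phi)$ between suitable variables, cf.\ the proof of \Cref{th1:claim2}), and is therefore independent of $\lambda$. The final dichotomy of the proof of \Cref{th:smwdense} now gives that for every cardinal $\lambda$, $\Gaif(M_\lambda)$ contains either $K^m_\lambda$ or an $m$-subdivided half-graph on $\lambda$ native vertices; since an $m$-subdivided half-graph on $\lambda^+$ native vertices contains $K^{2m}_\lambda$, in either case there is a fixed $r \in \{m, 2m\}$ such that $K^r_\lambda$ is a subgraph of some $G \in \Gaif(\Cfrak)$ for every cardinal $\lambda$. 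By the observation following the definition of total somewhere density, this suffices for $\Gaif(\Cfrak)$ to be totally somewhere dense. The only new technical care needed is verifying that each Ramsey thinning preserves an \emph{unbounded} class of cardinals rather than a set of size $\kappa$ for a fixed $\kappa$; this is precisely the content of \Cref{fact:pigeon}, after which the argument of \Cref{th:smwdense} transfers essentially unchanged.
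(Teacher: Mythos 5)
Your proposal is correct and matches the paper's own proof, which obtains the corollary precisely by rerunning the argument of \Cref{th:smwdense} for a class-level full embedding $\Card \to \Cfrak$ with the full Erd\H{o}s--Rado theorem (\Cref{erdos}) and \Cref{fact:pigeon} substituted for \Cref{cor:erdos} and \Cref{pigeonhole}, the subdivision length $m$ being fixed by the single formula $\phi$ exactly as you note. The only cosmetic adjustment is that the Erd\H{o}s--Rado threshold should be computed from $\max\{\lambda,2^{\max\{\mu,\nu\}}\}$ rather than from $\lambda$ alone, since the number of equality types need not be below $\lambda$.
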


The aim of the remaining of this paper is to show a partial converse to this. Clearly, a full converse is out of reach. For instance, the category of all bipartite undirected graphs is totally somewhere dense but not algebraically universal. However, we show that we may achieve algebraic universality after assigning an ``orientation'' to the objects of any totally somewhere dense category. Towards this, \Cref{sec:gadgets} first develops the necessary machinery to establish algebraic universality for categories of relational structures.

\section{The category of gadgets}\label{sec:gadgets}

A standard technique used to show that a category is algebraically universal is the so-called \emph{arrow construction}. In its simplest form, the idea is to glue a copy of a fixed ``arrow'' (which could be a graph, algebra, topological space, etc.) to every edge of a graph, so that every homomorphism of the underlying graphs gives a morphism of the resulting structures after gluing the arrow. This construction is therefore functorial, and under additional assumptions on the arrow, it can be shown to preserve algebraic universality. 

In most contexts, different copies of the arrow do not share any points in common unless they are glued on edges with common vertices. Here, we extend the arrow construction to relational structures by permitting different copies of the arrow to share multiple vertices in a way that reflects the cases of the canonical Erd{\"o}s-Rado theorem. Although this choice seems arbitrary at first, its usefulness will become apparent in \Cref{sec:monotone}. 

\begin{definition}
    Let $M$ be an $\L$-structure, $A, B, P$ disjoint subsets of $M$, and $\alpha,\beta \in M$ two distinct elements not in $A,B,P$. We call the tuple $(M,\alpha,\beta,A,B,P)$ an \emph{$\L$-gadget}. A homomorphism between two $\L$-gadgets $(M,\alpha_1,\beta_1,A_1,B_1,P_1)$ and $(N,\alpha_2,\beta_2,A_2,B_2,P_2)$ is a map $\rho:M\to N$ which is a homomorphism of the underlying $\L$-structures, and moreover 
    \[ \rho: \alpha_1 \mapsto \alpha_2, \beta_1 \mapsto \beta_2, A_1 \mapsto A_2, B_1 \mapsto B_2, P_1 \mapsto P_2, \]

    where the latter means that $\rho[A_1]\subseteq A_2$, and likewise for $B_i$ and $P_i$. We write $\GdgL$ for the category of $\L$-gadgets with homomorphisms. Moreover, we say that the $\L$-gadget is \emph{simple} whenever $A,B,P$ are empty.
\end{definition}

    Clearly, there is a natural forgetful functor $\mathbf{F}:\GdgL \to \StrL$ that maps an $\L$-gadget to the underlying $\L$-structure. The following definition and subsequent lemma illustrate how an $\L$-gadget gives rise to a functor from the category of graphs.  
    
 \begin{definition}   
    Fix an $\L$-gadget $(M,\alpha,\beta,A,B,P)$ and a graph $(G,E)$. Let
    \[G \star M := G \sqcup (\pi_0[E]\times A)\sqcup (\pi_1[E]\times B) \sqcup (E \times (M\setminus (\{a,b\}\cup A\cup B \cup P))) \sqcup P,\]
    where $\pi_i$ is the $i$-th projection function.
    For an edge $(u,v) \in E$, define the map $\phi_{(u,v)}^{G,M}:M \to G \star M$ by:

    \[   
\phi_{(u,v)}^{G,M}(x) = 
     \begin{cases}
       u, &\quad \text{if } x=\alpha  \\
       v, &\quad \text{if } x=\beta \\
       x,&\quad \text{if } x \in P \\
       (u,x),&\quad \text{if } x \in A \\
       (v,x), &\quad \text{if } x \in B \\ 
       (u,v,x), &\quad \text{otherwise.}
     \end{cases}
\]
  
   Define an $\L$-structure on $G\star M$ such that for every $R \in \L$
   \[ \bar y \in R^{G \star M} \iff \text{ there is } \bar x \in R^M \text{ and } (u,v) \in E \text{ such that } \phi_{(u,v)}(\bar x) = \bar y.\]
   For a graph homomorphism $f:(G,E)\to (H,S)$ define the map 
   \[ f\star M : G \star M \to H \star M, \text{by}\]

    \[   
(f \star M)(x) = 
     \begin{cases}
       f(x), &\quad \text{if } x \in G  \\
       x,&\quad \text{if } x \in P \\       
       (f(u),a), &\quad \text{if } x=(u,a) \\
       (f(v),b), &\quad \text{if } x=(v,b) \\ 

       (f(u),f(v),c), &\quad \text{if } x=(u,v,c).
     \end{cases}
\]
\end{definition}

\begin{lemma}
    For every $(u,v) \in E, \phi_{(u,v)}:M \to G\star M$ is an injective strong homomorphism. For a graph homomorphism $f: (G,E)\to (H,S)$, $f\star M: G\star M \to H \star M$ is a homomorphism of $\L$-structures. 
\end{lemma}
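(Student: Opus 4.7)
The proof plan splits naturally into three tasks: injectivity of $\phi_{(u,v)}$, the strong-homomorphism property of $\phi_{(u,v)}$, and the homomorphism property of $f\star M$. All three are ultimately bookkeeping exercises via case analysis on the partition of $M$ into $\{\alpha\},\{\beta\},A,B,P$, and the remainder $M\setminus (\{\alpha,\beta\}\cup A\cup B\cup P)$, mirrored by the disjoint union decomposition of $G\star M$.

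First, I would dispatch injectivity. The key observation is that the image of $\phi_{(u,v)}$ is distributed across the disjoint components of $G\star M$ according to the type of element: $P$ is mapped identically into $P$; $A$ into $\pi_0[E]\times A$ via $x\mapsto(u,x)$; $B$ into $\pi_1[E]\times B$ via $x\mapsto(v,x)$; the remainder into $E\times(\ldots)$ via $x\mapsto(u,v,x)$; and $\{\alpha,\beta\}$ into $G$ via $\alpha\mapsto u,\beta\mapsto v$. Distinct types thus cannot collide, and within each type the formula displays the map as injective in $x$ (the only case requiring the distinctness of $u$ and $v$ being the $\{\alpha,\beta\}$ block).

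Next, the strong-homomorphism property. The forward implication $\bar x\in R^M\Rightarrow\phi_{(u,v)}(\bar x)\in R^{G\star M}$ is immediate from the definition of the relations on $G\star M$ by taking the edge $(u,v)$ and the tuple $\bar x$ itself. For the converse, suppose $\phi_{(u,v)}(\bar x)\in R^{G\star M}$; by definition there exist $\bar y\in R^M$ and $(u',v')\in E$ with $\phi_{(u',v')}(\bar y)=\phi_{(u,v)}(\bar x)$, and it suffices to prove $\bar y=\bar x$. Coordinate-by-coordinate, the disjoint-union component in which $\phi_{(u,v)}(\bar x(i))$ lives pins down the type of $\bar y(i)$, and then the first-coordinate tag ($u$, $v$, or $(u,v)$) on the images in $\pi_0[E]\times A$, $\pi_1[E]\times B$, and $E\times(\ldots)$ forces agreement of the underlying element and of the relevant endpoint. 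The only delicate entries are those of $\bar x$ lying in $\{\alpha,\beta\}$: these land in $G$, shared across all edges, so I expect this to be the main obstacle, requiring a careful use of the other coordinates (and of $u\neq v$) to resolve the $\alpha$ versus $\beta$ ambiguity.

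Finally, to show $f\star M$ is a homomorphism, I would establish the identity $(f\star M)\circ\phi_{(u,v)}^{G,M}=\phi_{(f(u),f(v))}^{H,M}$ by direct verification on each of the six cases in the definitions of $\phi$ and of $f\star M$. Once this is in hand, the homomorphism property is immediate: given $\bar y\in R^{G\star M}$, unfold the definition to write $\bar y=\phi_{(u,v)}^{G,M}(\bar x)$ with $\bar x\in R^M$ and $(u,v)\in E$; apply the identity to get $(f\star M)(\bar y)=\phi_{(f(u),f(v))}^{H,M}(\bar x)$; and use that $f$ being a graph homomorphism gives $(f(u),f(v))\in S$, so the right-hand side lies in $R^{H\star M}$ by the defining clause of the relations on $H\star M$.
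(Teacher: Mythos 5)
Your treatment of injectivity and of $f\star M$ being a homomorphism matches the paper's own (terse) argument: the paper likewise hinges the $f\star M$ claim on the identity $(f\star M)\circ\phi_{(u,v)}^{G,M}=\phi_{(f(u),f(v))}^{H,M}$ and the defining clause of $R^{H\star M}$, and dismisses the injective strong-homomorphism claim as immediate from the definition of $G\star M$.

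The genuine gap is exactly where you yourself flag uncertainty: the converse direction of the strong-homomorphism property. You correctly observe that coordinates of $\bar x$ lying in $\{\alpha,\beta\}$ land in the shared block $G$ of $G\star M$ without any edge tag, and you hope to resolve the resulting ambiguity between $\alpha$ and $\beta$ using the other coordinates together with $u\neq v$. That hope is not justified in general, and the claim can in fact fail. Take $\L=\{R\}$ with $R$ binary, the simple gadget $M=\{\alpha,\beta\}$ with $R^M=\{(\beta,\alpha)\}$, and let $G$ be the loop-free graph on $\{u,v\}$ with $E=\{(u,v),(v,u)\}$. Then the underlying set of $G\star M$ is just $G$; from $\phi_{(v,u)}(\beta,\alpha)=(u,v)$ we get $(u,v)\in R^{G\star M}$, yet $\phi_{(u,v)}(\alpha,\beta)=(u,v)$ while $(\alpha,\beta)\notin R^M$, so $\phi_{(u,v)}$ is a homomorphism but not a strong one. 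The paper's ``follows trivially'' elides this, and the lemma as stated implicitly relies on extra hypotheses on $G$ and $M$ (at minimum, $G$ loop-free so that $\phi_{(u,v)}$ is injective at all, and $R^M$ not containing a permuted tuple that some other edge of $G$ can realign onto the image of $\phi_{(u,v)}$). So your instinct that this is the delicate case is correct, but the plan you sketch does not close it.
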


\begin{proof}
    That $\phi_{(u,v)}$ is an injective strong homomorphism follows trivially from the definition of $G \star M$. If $f: (G,E) \to (H,S)$ is a homomorphism of graphs, then for every $(u,v) \in E$ it holds that 
    \[ (f\star M)\circ \phi_{(u,v)}^{G,M}= \phi_{(f(u),f(v))}^{M,H}.\]
    Let $R \in \L$ and $\bar y \in R^{G\star M}$. Then there is a $\bar x \in R^M$ and $(u,v) \in E$ such that $\phi_{(u,v)}(\bar x)=\bar y$. It follows that
    \[ (f \star M)(\bar y)=\phi_{(f(u),f(v))}(\bar x),  \]
    and therefore $(f \star M)(\bar y) \in R^{H \star M}$.
\end{proof}

Hence, for any fixed $\L$-gadget $M$ the assignment $(-\star M):\Gra \to \StrL$ is functorial. As it turns out, the same holds for the assignment $(G \star -): \GdgL \to \StrL$ for any fixed graph $G$. 

\begin{definition}
    Fix a graph $G$, and let
    \[ \rho: (M,\alpha_1,\beta_1,A_1,B_1,P_1)\to (N,\alpha_2,\beta_2,A_2,B_2,P_2)\]
    be a homomorphism of $\L$-gadgets. Define the map 
    \[ G\star \rho : G \star M \to G \star N, \text{by}\]

    \[   
(G \star \rho)(x) = 
     \begin{cases}
       x, &\quad \text{if } x \in G  \\
       \rho(x),&\quad \text{if } x \in P \\       
       (u,\rho(a)), &\quad \text{if } x=(u,a) \\
       (v,\rho(b)), &\quad \text{if } x=(v,b) \\ 

       (u,v,\rho(c)), &\quad \text{if } x=(u,v,c).
     \end{cases}
\]
\end{definition}

\begin{lemma}
    For $G$ and $\rho$ as above, $(G\star \rho)$ is a homomorphism of $\L$-structures. 
\end{lemma}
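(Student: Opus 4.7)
The plan is to reduce the claim to a single commutative identity of maps and then invoke the preceding lemma on $\phi^{G,N}_{(u,v)}$.

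Fix $R \in \L$ and a tuple $\bar y \in R^{G \star M}$. By the definition of the $\L$-structure on $G \star M$, there exist an edge $(u,v) \in E(G)$ and a tuple $\bar x \in R^M$ such that $\bar y = \phi^{G,M}_{(u,v)}(\bar x)$. My task is then reduced to showing that $(G \star \rho)(\bar y) \in R^{G \star N}$.

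The key step is to establish the identity
\[
(G \star \rho) \circ \phi^{G,M}_{(u,v)} = \phi^{G,N}_{(u,v)} \circ \rho
\]
as maps $M \to G \star N$. This is verified by a routine case analysis on which of the subsets $\{\alpha_1\}, \{\beta_1\}, A_1, B_1, P_1, M \setminus (\{\alpha_1,\beta_1\} \cup A_1 \cup B_1 \cup P_1)$ an argument $x \in M$ belongs to. For instance, if $x \in A_1$, then $\phi^{G,M}_{(u,v)}(x) = (u,x)$, hence $(G \star \rho)((u,x)) = (u,\rho(x))$; on the other hand, $\rho(x) \in A_2$ since $\rho$ is a gadget homomorphism, so $\phi^{G,N}_{(u,v)}(\rho(x)) = (u,\rho(x))$ as well. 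The remaining five cases are analogous and each relies on the corresponding mapping property $\rho:\alpha_1\mapsto\alpha_2$, $\beta_1\mapsto\beta_2$, $B_1\mapsto B_2$, $P_1\mapsto P_2$ guaranteed by the gadget-homomorphism condition.

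Applying the identity pointwise to $\bar x$ gives $(G \star \rho)(\bar y) = \phi^{G,N}_{(u,v)}(\rho(\bar x))$. Since $\rho$ is a homomorphism of the underlying $\L$-structures, $\rho(\bar x) \in R^N$; by the preceding lemma, $\phi^{G,N}_{(u,v)}: N \to G \star N$ is a strong homomorphism, so $\phi^{G,N}_{(u,v)}(\rho(\bar x)) \in R^{G \star N}$, which yields the desired conclusion. The main obstacle is simply the book-keeping in the case analysis; no substantive difficulty arises beyond tracking that the gadget structure forces $\rho$ to preserve each designated subset of $M$.
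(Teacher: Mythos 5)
Your proposal is correct and follows essentially the same route as the paper: both proofs hinge on the commuting identity $(G\star\rho)\circ\phi^{G,M}_{(u,v)}=\phi^{G,N}_{(u,v)}\circ\rho$ (which the paper states without the case analysis you sketch) and then push a tuple $\bar x\in R^M$ through it, using that $\rho(\bar x)\in R^N$ and that $\phi^{G,N}_{(u,v)}$ carries relations into $R^{G\star N}$. Your extra detail on the case analysis also accounts for the well-definedness point the paper mentions separately, so nothing is missing.
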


\begin{proof}
    Firstly, $(G\star \rho)$ is well defined since $\rho$ satisfies
    \[ \rho: \alpha_1 \mapsto \alpha_2, \beta_1 \mapsto \beta_2, A_1 \mapsto A_2, B_1 \mapsto B_2, P_1 \mapsto P_2. \]
    Moreover, for every $(u,v) \in E$ it holds that 
    \[ (G \star \rho) \circ \phi^{G,M}_{(u,v)} = \phi^{N,G}_{(u,v)}\circ \rho.\]
    Consequently, if $\bar a \in R^{G\star M}$ for some $R \in \L$, then there is some $\bar x \in R^M$ and a $(u,v) \in E$ such that $\phi_{(u,v)}(\bar x)=\bar a$. It follows that $\rho(\bar x) \in R^N$, so 
    \[\phi_{(u,v)}\circ \rho(\bar x) = (G\star \rho)\circ \phi_{(u,v)}(\bar x)=(G \star \rho)(\bar a) \in R^{G\star N}.\]
    So, $(G\star \rho)$ is a homomorphism of $\L$-structures.
\end{proof}

In fact, even more is true. We write $\Gra^+$ for the category of graphs having at least one edge with homomorphisms. 

\begin{proposition}\label{prop:faithful}
    $\star : \Gra^+ \times \GdgL \to \StrL$ is a faithful bifunctor. 
\end{proposition}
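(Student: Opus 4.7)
The plan is to define the bifunctor on morphisms, verify the interchange law (which is the main functoriality check), and then establish faithfulness directly from the structure of the $\star$ construction.

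For a morphism pair $(f, \rho): (G, M) \to (H, N)$ in $\Gra^+ \times \GdgL$, I would define $f \star \rho := (H \star \rho) \circ (f \star M): G \star M \to H \star N$. The preceding two lemmas already establish that the one-variable assignments $(- \star M): \Gra \to \StrL$ and $(G \star -): \GdgL \to \StrL$ send morphisms to morphisms of $\L$-structures; preservation of identities and of composition in each variable is immediate from the piecewise definitions. The substantive ingredient is the interchange law
\[ (f \star N) \circ (G \star \rho) = (H \star \rho) \circ (f \star M), \]
which ensures that $f \star \rho$ is independent of the order in which the two single-variable functors are applied, and that the whole assignment is a bifunctor. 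I would prove this by a case analysis on the five pieces of the disjoint-union decomposition of $G \star M$: both composites act as $f$ on $x \in G$; as $\rho$ on $p \in P$; as $(u, a) \mapsto (f(u), \rho(a))$ on $A$-pairs; as $(v, b) \mapsto (f(v), \rho(b))$ on $B$-pairs; and as $(u, v, c) \mapsto (f(u), f(v), \rho(c))$ on generic triples. The naturality identities $(f \star M)\circ \phi_{(u,v)}^{G,M}=\phi_{(f(u),f(v))}^{H,M}$ and $(G \star \rho)\circ \phi_{(u,v)}^{G,M}=\phi_{(u,v)}^{G,N}\circ \rho$, already implicit in the previous two lemmas, make each case immediate.

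For faithfulness, suppose two pairs $(f, \rho), (f', \rho'): (G, M) \to (H, N)$ induce the same morphism $G \star M \to H \star N$. Restricting this common morphism to the canonical copy of $G$ inside $G \star M$ acts as $f$ on one side and as $f'$ on the other, so $f = f'$. To recover $\rho$, I exploit the hypothesis $G \in \Gra^+$ to pick an edge $(u, v) \in E(G)$: on an image $\phi_{(u,v)}^{G, M}(x)$ with $x \in M$, the common morphism equals $\phi_{(f(u), f(v))}^{H, N}(\rho(x))$ when computed from $(f, \rho)$ and $\phi_{(f(u), f(v))}^{H, N}(\rho'(x))$ when computed from $(f', \rho')$; since $\phi_{(f(u), f(v))}^{H, N}$ is injective, this forces $\rho(x) = \rho'(x)$ for every $x \in M$.

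The only real obstacle here is the bookkeeping in the interchange law, where the piecewise definitions of $f \star M$ and $G \star \rho$ have to be matched on each summand; nothing is conceptually deep. The hypothesis $G \in \Gra^+$ is precisely what makes faithfulness work: if $G$ had no edges then no copy of $M$ outside $P$ would ever be glued into $G \star M$, so the restriction of $\rho$ to $M \setminus P$ would be invisible to the resulting morphism and faithfulness would fail.
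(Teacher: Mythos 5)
Your proof is correct and follows essentially the same route as the paper, differing only in packaging. The paper defines $f \star \rho$ by an explicit five-case formula and verifies it is an $\L$-homomorphism via the commutation identity $(f \star \rho)\circ \phi_{(u,v)}^{G,M}=\phi_{(f(u),f(v))}^{H,N}\circ \rho$; you instead define $f \star \rho$ as the composite $(H\star\rho)\circ(f\star M)$, which makes homomorphism-hood automatic from the preceding two lemmas, and shift the casework onto verifying the interchange law $(f \star N) \circ (G \star \rho) = (H \star \rho) \circ (f \star M)$. This is a cleaner categorical framing, and it also records the bifunctor axioms (identity, composition, interchange) that the paper leaves implicit, but the underlying five-case computation is the same. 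Your faithfulness argument is also slightly tidier than the paper's: where the paper picks a point $x\notin\{\alpha_1,\beta_1\}$ with $\rho(x)\neq\mu(x)$ and traces its image case by case, you invoke injectivity of $\phi_{(f(u),f(v))}^{H,N}$ uniformly for all $x\in M$, which makes the role of the hypothesis $G\in\Gra^+$ transparent. Both arguments are valid and rest on the same observation that a single edge of $G$ suffices to embed a full copy of $M$ into $G\star M$.
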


\begin{proof}
    Let $f:G\to H$ be a graph homomorphism, and $\rho:(M,\alpha_1,\beta_1,A_1,B_1,P_1)\to (N,\alpha_2,\beta_2,A_2,B_2,P_2)$ a homomorphism of $\L$-gadgets. Define the map
    
    \[f \star \rho:G\star M\to H \star N \text{ by:}\]
    
    \[   
(f \star \rho)(x) = 
     \begin{cases}
       f(x), &\quad \text{if } x \in G  \\
       \rho(x),&\quad \text{if } x \in P \\       
       (f(u),\rho(a)), &\quad \text{if } x=(u,a) \\
       (f(v),\rho(b)), &\quad \text{if } x=(v,b) \\ 

       (f(u),f(v),\rho(c)), &\quad \text{if } x=(u,v,c).
     \end{cases}
\]
    As with the simpler cases, it holds that for all $(u,v)\in E(G)$
    \[ (f \star \rho)\circ \phi^{G,M}_{(u,v)}=\phi^{N,G}_{(f(u),f(v))}\circ \rho,\]
    from which it can be easily checked that $(f \star \rho)$ is a homomorphism of $\L$-structures. 

    Let $(f,\rho)\neq (g,\mu) \in \Hom((G,M),(H,N))$. If $f\neq g$, then clearly $(f \star \rho)\neq(g\star \mu)$. Moreover, if $\rho\neq\mu$ then $M$ must necessarily contain points other than $\alpha_1$ and $\beta_1$. So there is some $x \notin \{\alpha_1,\beta_1\}$ such that $\rho(x)\neq\mu(x)$. Since $G$ has at least one edge, there is some $(u,v) \in G$ such that $x$ is equal to one of $(u,a),(v,b),(u,v,c)$ or $p$, for some $p \in P$. It therefore follows that $(f \star \rho)(x)\neq (g \star \mu)(x)$. 
\end{proof}

We are forced to restrict to graphs with at least one edge so as to ensure that $G\star M\neq G$, and hence faithfulness of $(-\star-)$. In practice, we will also often require that neither $G$ nor $M$ contain isolated points so as to avoid unwanted endomorphisms of the resulting structure. The following lemma illustrates that in this context, the $\L$-structure $G \star M$ may be defined in terms of a universal property. 

\begin{lemma}\label{lem:injective}
    Fix an $\L$-gadget $(M,\alpha,\beta,A,B,P)$, a graph $G$ without isolated points, and an $\L$-structure $N$. Suppose that for each $(u,v)\in E(G)$ there are injective homomorphisms $f_{u,v}:M \to N$ such that $f_{u,v}(x)=f_{s,t}(y)$ if, and only if, $x=y$ and one of the following holds:
    \begin{itemize}
        \item $(u,v)=(s,t)$ and $x \in M$;
        \item $u = s, v \neq t$ and $x \in \{\alpha\}\cup A\cup P$;
        \item $u \neq s, v = t$ and $x \in \{\beta\}\cup B \cup P$;
        \item $u \neq s, v \neq t$ and $x \in P$.
    \end{itemize}
    Then there is an injective homomorphism $G \star M \to N$. 
\end{lemma}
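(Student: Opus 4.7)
The plan is to define an injective homomorphism $\Phi: G \star M \to N$ by setting $\Phi(\phi^{G,M}_{(u,v)}(x)) := f_{u,v}(x)$ for every edge $(u,v) \in E(G)$ and every $x \in M$, and then to verify in turn that this is well-defined, a homomorphism, and injective. Since $G$ has no isolated points, every element of $G \star M$ lies in the image of $\phi^{G,M}_{(u,v)}$ for some edge $(u,v)$: vertices of $G$ are hit via $\alpha$ (or $\beta$) through an incident edge; elements $(u,a)$ via $a \in A$ through an edge with source $u$; elements $(v,b)$ via $b \in B$ through an edge with target $v$; triples $(u,v,m)$ via $m \in M \setminus (\{\alpha,\beta\} \cup A \cup B \cup P)$; and elements of $P$ via themselves through any chosen edge. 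So the assignment covers all of $G \star M$.

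The main work is verifying well-definedness, i.e., that $\phi^{G,M}_{(u,v)}(x) = \phi^{G,M}_{(s,t)}(y)$ forces $f_{u,v}(x) = f_{s,t}(y)$. By direct inspection of the formula for $\phi^{G,M}_{(u,v)}$, the left-hand equation holds precisely when $x = y$ together with one of the following: $x \in P$ (identified across all edges); $x \in A$ with $u = s$; $x \in B$ with $v = t$; $x = \alpha$ with $u = s$; $x = \beta$ with $v = t$; or $x \in M \setminus (\{\alpha,\beta\} \cup A \cup B \cup P)$ with $(u,v) = (s,t)$. Each of these cases is subsumed by one of the four bullets in the hypothesis --- note that $\{\alpha\} \cup A \cup P$ packages $x = \alpha$ together with $x \in A$ and $x \in P$ under the ``shared source'' bullet, and dually for the ``shared target'' side --- so $f_{u,v}(x) = f_{s,t}(y)$ in every case.

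Granted well-definedness, $\Phi$ is a homomorphism of $\L$-structures essentially by construction: any tuple $\bar z \in R^{G \star M}$ arises as $\phi^{G,M}_{(u,v)}(\bar x)$ for some $\bar x \in R^M$ and some edge $(u,v)$, hence $\Phi(\bar z) = f_{u,v}(\bar x) \in R^N$ since each $f_{u,v}$ is a homomorphism. For injectivity, suppose $f_{u,v}(x) = f_{s,t}(y)$; then the hypothesis forces $x = y$ plus one of the four listed cases, and each of these cases matches an identification already present in $G \star M$, so $\phi^{G,M}_{(u,v)}(x) = \phi^{G,M}_{(s,t)}(y)$.

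The only delicate part is the case analysis for well-definedness: one must match each of the four hypotheses exactly to the identifications built into $G \star M$ and check that together they exhaust every way in which $\phi^{G,M}_{(u,v)}(x) = \phi^{G,M}_{(s,t)}(y)$ can occur. Everything else --- the homomorphism property and the reverse implication giving injectivity --- is immediate from the fact that the $f_{u,v}$ are themselves homomorphisms and from the biconditional form of the hypothesis.
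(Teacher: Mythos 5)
Your proposal is essentially the paper's own proof: the same glued map $\Phi=\bigcup_{(u,v)\in E(G)} f_{u,v}\circ(\phi^{G,M}_{(u,v)})^{-1}$, the same use of the no-isolated-points hypothesis for totality, the same case analysis of overlaps for well-definedness, and the same arguments for the homomorphism property and injectivity. One remark: your claim that $\phi_{(u,v)}(x)=\phi_{(s,t)}(y)$ forces $x=y$ silently ignores cross-identifications such as $\phi_{(u,v)}(\beta)=v=\phi_{(v,t)}(\alpha)$ when a vertex of $G$ is the head of one edge and the tail of another, but the paper's own intersection formula makes exactly the same omission, so your argument matches the paper at the same level of detail.
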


\begin{proof}
    Consider the map 
    \[ \Phi:= \bigcup_{(u,v) \in E(G)} f_{u,v}\circ (\phi^{M,G}_{u,v})^{-1}: G \star M \to N.\]
    Since $G$ contains no isolated points, it follows that $\Phi$ is defined on all of $G \star M$. We argue that it is well-defined. 

    Indeed, 
    \[\phi_{u,v}[M]\cap\phi_{s,t}[M] = 
    \begin{cases}
        \phi_{u,v}[M] &\quad \text{if }(u,v)=(s,t); \\
        \{u\} \cup \{(u,a):a \in A\}\cup P &\quad \text{if }u=s \land v \neq t; \\
        \{v\} \cup \{(v,b):b \in B\}\cup P &\quad \text{if }u\neq s \land v = t; \\
        P &\quad \text{otherwise.}
    \end{cases}
    \]
    Since the maps $f_{u,v}$ and $f_{s,t}$ agree on these four cases, it follows that $\Phi$ is well-defined. Moreover, since each $f_{u,v}\circ (\phi^{M,G}_{u,v})^{-1}$ is injective and $f_{u,v}\circ (\phi^{M,G}_{u,v})^{-1}(x)\neq f_{s,t}\circ (\phi^{M,G}_{s,t})^{-1}(y)$ whenever $x \neq y$ or $x=y$ and $x \notin \phi_{u,v}[M]\cap\phi_{s,t}[M]$ by the assumption, it follows that $\Phi$ is injective. Finally, $\Phi$ is a homomorphism each $f_{u,v}$ and $\phi_{u,v}^{M,G}$ is, and by construction all tuples that appear in a relation in $G \star M$ are contained in $\phi_{u,v}[M]$ for some $(u,v) \in E(G)$.  
\end{proof}

As previously mentioned, the usefulness of the generalised arrow construction comes from the fact that, under few additional assumptions, the construction is not only functorial, but in fact preserves universality. Here, a sufficient set of such assumptions is given by the following theorem, which will be instrumentally used in \Cref{sec:monotone} but is also of independent interest. 

\begin{theorem}\label{th:system}
    Fix some subcategory $\Gfrak$ of $\Gra$ such that no $G \in \obj(\Gfrak)$ has isolated points. Let $\Cfrak$ be a subcategory of $\StrL$, and suppose that there is some $\L$-gadget $(M,\alpha,\beta,A,B,P)$ such that
    \begin{enumerate}
        \item $M \in \obj(\Cfrak)$ and for every $G \in \obj(\Gfrak)$, $G \star M \in \obj(\Cfrak)$;
        \item\label{2} $\phi_{(u,v)}^{G,M}$ are the only morphisms $M \to G\star M$ in $\Cfrak$;
        \item\label{3} $f \star M$ is in $\morph(\Cfrak)$ for every $f \in \morph(\Gfrak)$;
    \end{enumerate}
    Then the functor $\Phi: \Gfrak \to \Cfrak$ given by 
    \[ \Phi(G) = G \star M, \quad \Phi(f)=f\star M \]
    is a full embedding. Consequently, if $\Gfrak$ is algebraically universal then so is $\Cfrak$. Moreover, if $|M|<\kappa$ for some infinite cardinal $\kappa$ and $\Gfrak$ is $\kappa$-universal, then so is $\Cfrak$.
\end{theorem}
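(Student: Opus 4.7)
The plan is to show that $\Phi$ is fully faithful and then transport universality through composition. Faithfulness is essentially handed to us by \Cref{prop:faithful}: since each $G \in \obj(\Gfrak)$ has no isolated points, any nonempty such $G$ has at least one edge and thus lives in $\Gra^+$, so $\Phi = (-\star M)$ inherits faithfulness from the bifunctor $\star$. (If $\obj(\Gfrak)$ contains the empty graph at all, the corresponding hom-sets are singletons, so nothing is lost.)

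The core of the argument is fullness. Fix $G, H \in \obj(\Gfrak)$ and a morphism $g : G \star M \to H \star M$ in $\Cfrak$. First I would observe that, by condition (\ref{2}) applied to $G$, each $\phi_{(u,v)}^{G,M}$ is a $\Cfrak$-morphism, so the composite $g \circ \phi_{(u,v)}^{G,M} : M \to H \star M$ lies in $\morph(\Cfrak)$; applying condition (\ref{2}) to $H$ then forces this composite to coincide with $\phi_{(s,t)}^{H,M}$ for some $(s,t) \in E(H)$. Evaluating at $\alpha$ and $\beta$ yields $g(u) = s$ and $g(v) = t$, so $g$ carries the vertex set of $G$ (sitting inside $G \star M$) into the vertex set of $H$. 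In particular the restriction $f := g \upharpoonright G : G \to H$ is a well-defined map sending edges to edges, i.e.\ a graph homomorphism, and by the above $(f \star M) \circ \phi_{(u,v)}^{G,M} = \phi_{(f(u),f(v))}^{H,M} = g \circ \phi_{(u,v)}^{G,M}$ for every $(u,v) \in E(G)$. Since $G$ has no isolated points, the images of the maps $\phi_{(u,v)}^{G,M}$ jointly exhaust $G \star M$, and so $g = f \star M$; condition (\ref{3}) guarantees this lives in $\morph(\Cfrak)$.

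The two universality corollaries are then a matter of composing $\Phi$ with a given fully faithful functor $\Psi : \Alg(\Delta) \to \Gfrak$ (respectively its size-bounded version), which yields a fully faithful functor into $\Cfrak$. For the $\kappa$-universal statement, I would verify the straightforward cardinal bound $|G \star M| \le |G| + |E(G)| \cdot |M| + |P|$, which is strictly less than $\kappa$ whenever $|G|, |M| < \kappa$ and $\kappa$ is infinite; this shows $\Phi$ restricts to $\Gfrak_{<\kappa} \to \Cfrak_{<\kappa}$. The only genuinely delicate point in the whole argument is the consistency check in the fullness step — namely that $g \upharpoonright G$ is well-defined on each vertex regardless of which incident edge one uses — but this is immediately forced by the identities $\phi_{(u,v)}^{G,M}(\alpha) = u$ and $\phi_{(u,v)}^{G,M}(\beta) = v$ together with condition (\ref{2}). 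Once these are unpacked, the rest is bookkeeping.
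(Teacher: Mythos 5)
Your proposal is correct and follows essentially the same route as the paper: faithfulness via \Cref{prop:faithful}, fullness by composing a given morphism $g$ with the embeddings $\phi_{(u,v)}^{G,M}$, invoking condition (\ref{2}) to identify these composites with $\phi_{(g(u),g(v))}^{H,M}$, and concluding $g = f \star M$ because the images of the $\phi_{(u,v)}^{G,M}$ cover $G \star M$ when $G$ has no isolated points. The only cosmetic difference is that you replace the paper's case-by-case verification of $g = f\star M$ with the covering argument, and you spell out the cardinality bound for the $\kappa$-universal case, which the paper leaves as ``trivial by definitions.''
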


\begin{proof}
    It suffices to show that the functor $\Phi: \Gfrak \to \Cfrak$
    is a full embedding. The rest of the claim follows trivially by definitions.
    
    By \Cref{prop:faithful}, $*:\Gra^+ \times \GdgL \to \StrL$ is a faithful bifunctor, and so $\Phi$ is faithful. It therefore remains to show fullness. Let 
    \[g : (G,E)\star M \to (H,S) \star M\]
    be a morphism in $\Cfrak$. It follows by (\ref{2}), that for every edge $(u,v) \in E$ there is an edge $(u',v') \in S$ such that $g \circ \phi_{(u,v)}=\phi_{(u',v')}$. Define the map 
    \[f: G \to H, \quad u \mapsto u'.\]

    This is well-defined: since $G$ has no isolated points, for every $u \in G$ there is some $v \in G$ such that $(u,v) \in E$ or $(v,u) \in E$. Without loss of generality, we may assume the former and thus $u = \phi_{(u,v)}(\alpha)$. Hence $g(u)=g \circ \phi_{(u,v)}(\alpha)=\phi_{(u',v')}(\alpha)=u'$. Moreover, since $(u,v) \in E$ implies $(f(u),f(v))\in S$ it follows that $f:(G,E)\to(H,S)$ is a graph homomorphism. We claim that $(f \star M) = g$.

    Indeed, let $(u,a) \in G \times A \subseteq G \star M$. By construction, there must be a $v \in G$ and an edge $(u,v) \in E$. Hence, 
    \[ g(u,a)=g \circ \phi_{(u,v)}(a)=\phi_{(f(u),f(v))}(a)=(f(u),a).\]
    Similarly if $(v,b) \in G\times B\subseteq G\star M$, then $g(v,b)=(f(v),b)$. If $(u,v,c) \in (E \times (M\setminus (\{a,b\}\cup A\cup B \cup P)))$, then 
    \[ g(u,v,c)=g \circ \phi_{(u,v)}(c)=\phi_{(f(u),f(v))}(c)=(f(u),f(v),c). \]
    Finally, if $p \in P \subseteq G\star M$ then for any $(u,v) \in E$ it holds that 
    \[ g(p)=g \circ \phi_{(u,v)}(p)=\phi_{(f(u),f(v))}(p)=p,\]
    and therefore $g = f \star M$ as required. 
\end{proof}

Observe that condition (\ref{2}) implies that $M$ is rigid (and so in particular it has no isolated points), however the converse does not hold, e.g.\ $\L = \{<\}$ and $M$ is the total order on $n \in \omega$ elements. Furthermore, it implies that $P$ is fixed pointwise by all homomorphisms $G\star M \to H\star M$. Moreover, condition (\ref{3}) is clearly satisfied automatically whenever $\Cfrak$ is a full subcategory of $\StrL$. 

Notice that since $\star$ maps into the category of $\L$-structures, it is sensible to attempt to use an $\L$-structure $H\star M$ as an arrow. To do so under this framework, we ought to specify an $\L$-gadget structure on $G \star M$. Thankfully, this structure can be inherited from $G$ and $M$. In the remainder of this section we show how $\star$ can be extended to an operation on gadgets. Although this can also be achieved functorially, we do not explore this further here as it is not relevant to our proofs.  

\begin{definition}
    Let $(H,s,t)$ be a simple graph gadget, and $(M,\alpha,\beta,A,B,P)$ an $\L$-gadget. Define the $\L$-gadget
    \[ H \ostar M:=(G\star M,s,t,A',B',P), \text{ where}\]
    \[A'=\{(s,a):a \in A\},\quad B'=\{(t,b):b\in B\}.\]
\end{definition}

\begin{proposition}\label{stargadget}
    Let $G$ be a graph, $(H,s,t)$ be a simple graph gadget such that $s \in \pi_1[E(H)],t\in \pi_2[E(H)]$, and $M$ an $\L$-gadget. Then:
    \[ 
    (G \star H) \star M \iso G \star (H \ostar M).
    \]
\end{proposition}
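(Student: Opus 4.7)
The plan is to exhibit an explicit bijection $\Psi\colon G \star (H \ostar M) \to (G \star H) \star M$ on the underlying sets and verify it preserves every relation $R \in \L$. Morally, both sides realise a two-stage substitution: $(G \star H) \star M$ first substitutes a copy of $H$ into each edge of $G$ to obtain $K := G \star H$, then substitutes $M$ into each edge of $K$; while $G \star (H \ostar M)$ first substitutes $M$ into each edge of $H$ to produce the gadget $H \ostar M$, then substitutes this gadget into each edge of $G$. The proposition is an associativity statement for these two orderings.

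The first step is to unfold both sides according to the definition of $\star$. With $A' = \{s\} \times A$ and $B' = \{t\} \times B$, the right-hand side decomposes as $G \sqcup (\pi_0[E(G)] \times A') \sqcup (\pi_1[E(G)] \times B') \sqcup \bigl(E(G) \times ((H \star M) \setminus (\{s,t\} \cup A' \cup B' \cup P))\bigr) \sqcup P$, and the non-distinguished part of $H \star M$ further decomposes into $H \setminus \{s, t\}$, $(\pi_0[E(H)] \setminus \{s\}) \times A$, $(\pi_1[E(H)] \setminus \{t\}) \times B$, and $E(H) \times (M \setminus (\{\alpha, \beta\} \cup A \cup B \cup P))$. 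The left-hand side $K \star M$ decomposes analogously in terms of $K$, $\pi_0[E(K)] \times A$, $\pi_1[E(K)] \times B$, $E(K) \times (M \setminus \cdots)$, and $P$. The key combinatorial fact I exploit is that $E(K)$ is in bijection with $E(G) \times E(H)$ via $((u,v),(h_1,h_2)) \mapsto (\phi_{(u,v)}^{G,H}(h_1), \phi_{(u,v)}^{G,H}(h_2))$, and that the hypotheses $s \in \pi_1[E(H)]$ and $t \in \pi_2[E(H)]$ ensure $\pi_0[E(G)] \subseteq \pi_0[E(K)]$ and $\pi_1[E(G)] \subseteq \pi_1[E(K)]$, so that the targets of the distinguished parts land in the expected places.

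With these identifications, I define $\Psi$ term by term: it acts as the identity on $G$ and on $P$; sends $(u, (s, a)) \mapsto (u, a)$ and $(v, (t, b)) \mapsto (v, b)$ for the $A'$- and $B'$-components; sends $(u, v, h) \mapsto (u, v, h)$ for $h \in H \setminus \{s, t\}$; sends $(u, v, (h, a)) \mapsto ((u, v, h), a)$ and $(u, v, (h, b)) \mapsto ((u, v, h), b)$ for the internal $A$- and $B$-attachments at non-$\{s,t\}$ vertices of $H$; and sends $(u, v, ((h_1, h_2), m)) \mapsto ((\phi_{(u, v)}^{G, H}(h_1), \phi_{(u, v)}^{G, H}(h_2)), m)$ for the internal $M$-elements. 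That $\Psi$ is a bijection then follows by matching the stratifications piece by piece.

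To see $\Psi$ preserves each relation $R$, I use that tuples in $R^{G \star (H \ostar M)}$ are exactly those of the form $\phi_{(u, v)}^{G, H \ostar M}\!\bigl(\phi_{(h_1, h_2)}^{H, M}(\bar m)\bigr)$ for some $(u, v) \in E(G)$, $(h_1, h_2) \in E(H)$, and $\bar m \in R^M$, while tuples in $R^{(G \star H) \star M}$ are exactly those of the form $\phi_{(\phi_{(u,v)}^{G,H}(h_1), \phi_{(u,v)}^{G,H}(h_2))}^{K, M}(\bar m)$; the definition of $\Psi$ is arranged precisely so that these two composite maps $M \to -$ agree under $\Psi$. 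Hence $\Psi$ is an isomorphism of $\L$-structures. The main obstacle is the combinatorial bookkeeping across these cases, in particular tracking which $A$- and $B$-attachments in $M$ get ``promoted'' to the distinguished parts $A', B'$ of $H \ostar M$ (namely those at $s$ and $t$) and which remain internal, and verifying that both behaviours correspond under $\Psi$ to the appropriate sources/targets of $K$ on the left.
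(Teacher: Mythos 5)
Your strategy is essentially the paper's: both proofs identify the two structures by matching the canonical copies of $M$ indexed by a pair consisting of an edge of $G$ and an edge of $H$ (the paper glues the partial maps $\phi^{G,H\ostar M}_{(u,v)}\circ\phi^{H,M}_{(z,w)}\circ(\psi^{u,v}_{z,w})^{-1}$ after first reducing to graphs without isolated points, whereas you write the inverse bijection out piecewise on the disjoint-union stratification, which lets you skip that reduction). However, your case analysis has a genuine hole, and it sits exactly at the point you dismiss as bookkeeping. The interior of the gadget $H\ostar M$, i.e.\ $(H\star M)\setminus(\{s,t\}\cup A'\cup B'\cup P)$, contains the elements $(t,a)$ with $a\in A$ whenever $t$ is the first coordinate of some edge of $H$, and $(s,b)$ with $b\in B$ whenever $s$ is the second coordinate of some edge of $H$ (this happens in the paper's own application, since $\Hcal$ has the edges $t\to 1$ and $2\to s$). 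Your map is only defined on attachments ``at non-$\{s,t\}$ vertices of $H$'', so the elements $(u,v,(t,a))$ and $(u,v,(s,b))$ of $G\star(H\ostar M)$ receive no image; and the gluing forces their images to be $(\phi_{(u,v)}(t),a)=(v,a)$ and $(u,b)$, which are \emph{shared} by all edges of $G$ with the same target (resp.\ source), so the natural completion of your $\Psi$ is not injective when such edges exist.

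This is not a repairable oversight within your scheme under the stated hypotheses: take $\L=\{R\}$ binary, $M=\{\alpha,\beta,a\}$ with $A=\{a\}$, $B=P=\emptyset$, $H=\{s,t,c\}$ with $E(H)=\{(s,t),(t,c)\}$ (so $s$ is a first and $t$ a second coordinate of an edge, as required), and $G$ with $E(G)=\{(u,v),(u',v)\}$. Then $G\star(H\ostar M)$ has nine elements (the interior element $(t,a)$ is replicated once per edge of $G$) while $(G\star H)\star M$ has eight (the corresponding attachment $(v,a)$ at the native vertex $v$ is shared by both edges), so no bijection, let alone an isomorphism, exists. Hence your claimed stratified bijection, and in particular the asserted identification of $E(G\star H)$ with $E(G)\times E(H)$ together with the matching of the $A$- and $B$-strata, breaks down; one needs extra hypotheses (e.g.\ that $s$ occurs only as a first and $t$ only as a second coordinate of edges of $H$, or that $A=B=\emptyset$) to make it go through. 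To be fair, the paper's own proof glosses the same configuration when it asserts that the maps $f_{u,v,z,w}$ agree on the overlaps $\psi^{u,v}_{z,w}[M]\cap\psi^{u',v'}_{z',w'}[M]$ (on an overlap of the form $\{v\}\cup(\{v\}\times A)\cup P$ arising from $z=z'=t$ they need not agree), and the later applications only use the arc graphs, where these attachment points vanish; but as a proof of the proposition as stated, the missing case is fatal rather than cosmetic.
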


\begin{proof}
Observe that if $G$ has $\kappa$ isolated points and $H$ has $\lambda$ isolated points, then both $(G \star H)\star M$ and $G \star (H \ostar M)$ have exactly $\kappa+|E(G)|\times \lambda$ isolated points that do not come from copies of $M$. Since there is a bijection between these, we may restrict to the subgraphs of $G$ and $H$ that do not contain isolated points. 

So, without loss of generality assume that $G$ and $H$ do not contain isolated points. Recall that $(x,y) \in E(G \star H)$ if and only if there are $(u,v)\in E(G)$ and $(z,w) \in E(H)$ such that 
    \[ \phi_{u,v}^{G,H}(z,w)=(x,y).\]
    Hence for all $(u,v) \in E(G)$ and $(z,w) \in E(H)$ consider the injective strong homomorphisms
    \[ \psi^{u,v}_{z,w}=\phi_{\phi_{u,v}(z,w)}^{G\star H,M}.\]

    Consider 
    \[ f_{u,v,z,w}=\phi_{u,v}^{G,H\ostar M}\circ \phi_{z,w}^{H,M}\circ(\psi^{u,v}_{z,w})^{-1}:\psi^{u,v}_{z,w}[M] \to G\star (H\ostar M),\]
    
    \[ f = \bigcup_{(u,v)\in E(G)}\bigcup_{(z,w)\in E(H)} f_{u,v,z,w}:(G\star H)\star M \to G\star (H\ostar M).\]

    So, for every $(u,v)\in E(G)$ and every $(z,w) \in E(H)$ we have a diagram:

    \begin{figure}[H]
  \centering\small

\begin{tikzcd}
M \arrow[dd, "{\phi_{z,w}^{H,M}}"'] \arrow[rr, "{\psi^{u,v}_{z,w}}"] &  & (G\star H)\star M \arrow[dd, "f"] \\
                                                                                &  &                                   \\
H\ostar M \arrow[rr, "{\phi_{u,v}^{G,H\ostar M}}"']                             &  & G \star (H \ostar M)             
\end{tikzcd}

\end{figure}

    We argue that $f$ is an isomorphism of $\L$-structures. Indeed, every element of $(G \star H)\star M$ is in the image of $\psi^{u,v}_{z,w}$ for some $(u,v)\in E(G)$ and $(z,w) \in E(H)$, since $G$ and $H$ do not contain isolated points. Therefore $f$ is defined on all of $(G\star H)\star M$. Furthermore, it is a well defined map since
    \[\psi^{u,v}_{z,w}[M] \cap \psi^{u',v'}_{z',w'}[M] =  \]
    \begin{equation*}\label{intersect2}
    \begin{cases}
       \psi^{u,v}_{z,w}[M], & \text{if } (u,v)=(u',v')\land (z,w)=(z',w');  \\
       
       \{\phi_{u,v}(z)\}\cup\{(\phi_{u,v}(z),a):a \in A\} \cup P, &\text{if } \phi_{u,v}(z)=\phi_{u',v'}(z') \land  \phi_{u,v}(w)\neq \phi_{u',v'}(w'); \\
       
       \{\phi_{u,v}(w)\}\cup\{(\phi_{u,v}(w),b):b \in B\}\cup P, &\text{if } \phi_{u,v}(z)\neq\phi_{u',v'}(z')  \land  \phi_{u,v}(w)= \phi_{u',v'}(w'); \\
       
       P, &\text{otherwise.}
     \end{cases}
    \end{equation*}

    and the maps $f_{u,v,z,w}$ agree on these sets. Finally, every element of $H \ostar M$ lies in the image of $\phi^{H,M}_{z,w}$ for some $(z,w) \in E(H)$ since $H$ has no isolated points, and moreover every element of $G \star (H \ostar M)$ lies in the image of $\phi^{G,H\ostar M}_{u,v}$ for some $(u,v)\in E(G)$ since $G$ has no isolated points. It follows that $f$ is surjective, and since all of $\psi^{u,v}_{z,w},\phi^{H,M}_{z,w},\phi^{G,H\ostar M}_{u,v}$ are injective strong homomorphisms, $f$ is an isomorphism. 
\end{proof}

As an example, observe that the directed $r$-subdivision of a graph $H$ is isomorphic to $H \star P_r$ where $(P_r,\alpha,\beta)$ is a simple graph gadget with $P_r$ a directed graph path of length $r+1$ from $\alpha$ to $\beta$. Hence, \Cref{lem:ostar} implies that for all graphs $G$ and simple graph gadgets $(H,s,t)$ 
\[G \star H^{(r)} \iso (G \star H)^{(r)}.\] 

In the next section we establish that the category of well-founded graphs is algebraically universal using a simple gadget construction. This is important for our proof of the partial converse to \Cref{cor:swd}, established in \Cref{sec:monotone}. 

\section{The category of well-founded graphs}\label{sec:wellfdd}

Consider the category $\Ord$ of ordinals with homomorphisms, i.e.\ order-preserving maps. It is easy to observe that $\Ord$ is not $\kappa$-universal for any infinite cardinal $\kappa$. Indeed, for any $\alpha,\beta \in \obj(\Ord)$, either there is a homomorphism $\alpha \to \beta$ or a homomorphism $\beta \to \alpha$. On the other hand there are plenty of finite graphs which do not map homomorphically into one another. Here, we argue that although $\Ord$ is not algebraically universal, its monotone closure is. This precisely corresponds to the category of well-founded graphs. Recall that a binary relation $R \subseteq X \times X$ is \emph{well-founded} on $X$ if for all non-empty $S \subseteq X$ there is some $s \in S$ such that no $p \in S$ satisfies $p R s$. Recall the following standard lemma. 

\begin{lemma}
    The transitive closure of a well-founded binary relation $R\subseteq X \times X$ is a well-founded strict partial order $(X,<)$. 
\end{lemma}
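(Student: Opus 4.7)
The plan is to verify the three defining properties of a well-founded strict partial order separately. Transitivity is immediate from the definition of transitive closure: concatenating the $R$-chain witnessing $a < b$ with the one witnessing $b < c$ produces a chain witnessing $a < c$. For irreflexivity, if $x < x$ held then there would exist $n \geq 1$ and a chain $x = x_0 \mathrel{R} x_1 \mathrel{R} \cdots \mathrel{R} x_n = x$; the non-empty set $\{x_0,\dots,x_{n-1}\}$ would then lack an $R$-minimal element, since each $x_i$ has $x_{(i-1) \bmod n}$ as an $R$-predecessor inside the set, contradicting well-foundedness of $R$.

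The main content is proving that $<$ is itself well-founded. Given a non-empty $S \subseteq X$, the key idea is to pass to the upward $R$-closure $T = \{y \in X : s \mathrel{R^*} y \text{ for some } s \in S\}$, where $R^*$ denotes the reflexive-transitive closure of $R$. Since $S \subseteq T$, well-foundedness of $R$ applied to $T$ yields an element $t \in T$ with no $p \in T$ satisfying $p \mathrel{R} t$. I would then argue in two steps. First, $t \in S$: otherwise there is some $s_0 \in S$ with $s_0 \mathrel{R^+} t$, and the penultimate element of any witnessing $R$-chain from $s_0$ to $t$ lies in $T$ and $R$-precedes $t$, a contradiction. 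Second, $t$ is $<$-minimal in $S$: if some $p \in S$ satisfied $p < t$, the same penultimate-element argument (now with $p$ in the role of $s_0$) would deliver an element of $T$ that $R$-precedes $t$.

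The crucial step — and the one I expect to require the most care — is the choice of the \emph{upward} rather than the \emph{downward} closure of $S$. The naive alternative $\{y : y \mathrel{R^*} s \text{ for some } s \in S\}$ fails, because its $R$-minimum can sit strictly below every element of $S$ rather than inside $S$ itself. Taking the upward closure forces the minimum back into $S$, since any $R$-predecessor of a minimum of $T$ would have to be witnessed by a chain starting inside $S$, and the only way to avoid such a predecessor is for the minimum to \emph{be} the starting point of such a chain. No appeal to dependent choice is needed; the whole argument follows from a single application of the well-foundedness of $R$ to the set $T$.
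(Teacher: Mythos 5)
Your proof is correct, but the main step is argued along a genuinely different route from the paper. For well-foundedness of $<$ the paper argues by contradiction: assuming some non-empty $S$ has the property that every $s \in S$ admits $p \in S$ with $p < s$, it chooses for each such pair a witnessing $R$-chain and shows that the set $S'$ of all elements of all chosen chains has no $R$-minimal element, contradicting well-foundedness of $R$ (this implicitly uses a choice function to select the chains, which is harmless in the paper's $\ZFC$ setting). You instead argue directly: given non-empty $S$, you pass to the upward $R$-closure $T \supseteq S$, apply well-foundedness of $R$ once to get an $R$-minimal $t \in T$, and use the penultimate-element trick twice to conclude both $t \in S$ and that no $p \in S$ satisfies $p < t$; both steps check out, as does your observation that the downward closure would not work. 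Your irreflexivity argument via the cycle $\{x_0,\dots,x_{n-1}\}$ is essentially the paper's acyclicity argument. What your route buys is a single, choice-free application of well-foundedness that exhibits the required minimal element of $S$ directly; what the paper's route buys is brevity, since after acyclicity it disposes of well-foundedness of $<$ in a few lines by aggregating the witnessing chains.
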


\begin{proof}
    We first argue that any well-founded structure $(X,R)$ is an acyclic directed graph. The definition trivially implies that no $u \in X$ satisfies $u R u$, while if $u R v$ then taking $S=\{u,v\}$ in the definition gives that $s = u$ and so $u \not R v$. Hence $(X,R)$ is a directed graph. Now if $s_1 R s_2 \dots s_n R s_1$ is a cycle in $X$, then taking $S=\{s_1,\dots,s_n\}$ there must be some $s \in S$ such that no $s_i$ satisfies $s_i R s$; contradiction. It follows that $(X,R)$ is acyclic.

    Let $<$ be the transitive closure of $R$. Since $(X,R)$ is an acyclic directed graph, it follows by standard facts that $(X,<)$ is a strict partial order. We argue that $<$ is still well-founded on $X$. Suppose that there is some $S \subseteq X$ such that for all $s \in S$ there exists some $p \in S$ with $p<s$. For every such pair we may associate a finite sequence $p=u_0 R u_1 R \dots R u_{n_s}=s$ of elements from $X$. Letting $S'$ be the subset of $X$ containing the elements of all such sequences, we see that $S'$ witnesses that $(X,R)$ is not well-founded. Consequently, no such $S\subseteq X$ exists and therefore $(X,<)$ is well-founded.  
\end{proof}

By the above, given a well-founded relation $R\subseteq X \times X$ we may view the pair $(X,R)$ as an acyclic directed graph. Hence, we call any acyclic directed graph $(G,E)$ such that $E$ is well-founded in $G$ a \emph{well-founded graph}. We write $\W\Gra$ for the category of well-founded graphs with homomorphisms. The following lemma illustrates that the category of well-founded graphs is precisely the full monotone closure of $\Ord$.

\begin{lemma}
    The following are equivalent for a graph $(X,R)$:
    \begin{enumerate}
        \item $R$ is well-founded on $X$;
        \item There is an ordinal $\alpha<|X|^+$ and an injective homomorphism $f:(X,R)\to (\alpha,\in)$;
        \item $(X,R^*)$ does not contain infinite paths, where $R^*=\{(x,y):(y,x) \in R\}$
        
    \end{enumerate}
\end{lemma}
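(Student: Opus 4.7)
The plan is to prove the cycle $(1)\Rightarrow(2)\Rightarrow(3)\Rightarrow(1)$, using only the existence of $R$-minimal elements in well-founded relations and the Axiom of Choice. The two straightforward directions are $(2)\Rightarrow(3)$ and $(3)\Rightarrow(1)$; the main content lies in $(1)\Rightarrow(2)$, which essentially amounts to linearly extending a well-founded relation to a compatible well-ordering of $X$.

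For $(1)\Rightarrow(2)$ I would enumerate $X$ by transfinite recursion in an $R$-compatible manner. Set $S_0=X$ and, given $S_\beta$ non-empty, use well-foundedness together with choice to pick an $R$-minimal element $m_\beta\in S_\beta$; then let $S_{\beta+1}=S_\beta\setminus\{m_\beta\}$ and take intersections at limits. Since exactly one element is removed at each successor stage, the process terminates at stage $\lambda=|X|$, yielding an enumeration $X=\{m_\beta:\beta<\lambda\}$. If $m_\gamma R m_\beta$ with $\gamma\geq\beta$, then $m_\gamma\in S_\beta$, contradicting the $R$-minimality of $m_\beta$; hence $\gamma<\beta$ and the map $f(m_\beta)=\beta$ defines an injective homomorphism $(X,R)\to(\lambda,\in)\subseteq(|X|^+,\in)$.

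For $(2)\Rightarrow(3)$, I would suppose an injective homomorphism $f:(X,R)\to(\alpha,\in)$ is given, and, for contradiction, that an infinite path $x_0,x_1,x_2,\dots$ exists in $(X,R^*)$. Unpacking the definition of $R^*$ yields $x_{i+1}Rx_i$ for all $i$, so that $f(x_0)>f(x_1)>f(x_2)>\cdots$ would be an infinite strictly descending sequence of ordinals, which is impossible.

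For $(3)\Rightarrow(1)$, I would argue by contrapositive: if $R$ fails to be well-founded on $X$, then there is a non-empty $S\subseteq X$ in which every $s$ admits some $p\in S$ with $pRs$. Using choice to fix a predecessor function $p:S\to S$ and iterating from any $s_0\in S$ produces $s_{n+1}:=p(s_n)$ satisfying $s_{n+1}Rs_n$, and hence an infinite path $s_0,s_1,\ldots$ in $(X,R^*)$. I do not anticipate a genuine obstacle here; the only point requiring a quick sanity check is the bound $\alpha<|X|^+$ in $(1)\Rightarrow(2)$, which is immediate since the recursion consumes one distinct element at each successor step and so the resulting ordinal $\lambda$ has cardinality at most $|X|$.
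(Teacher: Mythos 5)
Your proposal is correct, and its two easy implications $(2)\Rightarrow(3)$ and $(3)\Rightarrow(1)$ are essentially identical to the paper's (infinite descending sequence of ordinals for the former, a choice function producing an infinite $R^*$-sequence inside a witnessing set $S$ for the latter). The genuine difference is in $(1)\Rightarrow(2)$: the paper routes through the preceding lemma (the transitive closure is a well-founded strict partial order), extends that partial order to a strict total order via Zorn's Lemma, argues the extension is a well-order, and identifies it with an ordinal $\alpha<|X|^+$; you instead construct the embedding directly by transfinite recursion, at each stage removing an $R$-minimal element of what remains and sending the element removed at stage $\beta$ to $\beta$. Your route is more self-contained --- it needs neither the transitive-closure lemma nor the order-extension principle, it delivers the bound $\alpha<|X|^+$ immediately, and it sidesteps the delicate point in the paper's argument that an \emph{arbitrary} linear extension of a well-founded order need not be well-ordered (the extension has to be chosen suitably, which is exactly what your recursion does explicitly); the paper's route, in exchange, is shorter on the page because it reuses the previous lemma and a standard extension principle. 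One small imprecision on your side: the recursion need not terminate at stage exactly the cardinal $|X|$ --- since the minimal elements picked at earlier stages may leave a non-empty remainder at limit stages, the terminating ordinal $\lambda$ can be, say, $\omega+\omega$ for countable $X$ --- but your closing observation that $\lambda$ injects its stages into $X$, hence $|\lambda|\le|X|$ and $\lambda<|X|^+$, is the bound actually required by $(2)$, so the conclusion stands.
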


\begin{proof}
    $(1)\implies(2)$: Suppose that $(X,R)$ is well-founded. By the previous lemma, its transitive closure $(X,<)$ is a well-founded strict partial order. By Zorn's Lemma, $(X,<)$ extends to a strict total order $(X,\prec)$. This is well-ordered. Indeed, for any $S \subseteq X$ there is some $s \in S$ such that no $p \in S$ satisfies $p<s$, and therefore $s \preceq p$. Hence, $(X,\prec)$ is isomorphic to a unique ordinal $\alpha < |X|^+$. The composition of this isomorphism with the natural injective homomorphism $(X,<) \to (X,\prec)$ gives an injective homomorphism $f:(X,<)\to (\alpha,\in)$. Composing once more with the injective homomorphism $(X,R) \to (X,<)$, we obtain an injective homomorphism $(X,R) \to (\alpha,\in)$. 

    $(2)\implies (3)$: If $f:(X,R)\to(\alpha,\in)$ is an injective homomorphism and $(X,R^*)$ contains an infinite path $u_0 R^* u_1 R^* \dots$, then $f(u_0)\ni f(u_1)\ni\dots$ is an infinite strictly decreasing sequence in $\alpha$; contradiction. 

    $(3)\implies (1)$: If $(X,R)$ is not well-founded then there is some $S \subseteq X$ such that for every $s \in S$ there is some $p \in S$ with $p R s$. We may therefore inductively built a path $s_0 R^* s_1 R^* s_2 R^* \dots$, and hence $(X,R^*)$ contains an infinite path. 
\end{proof}

 We hence establish that $\W\Gra$ is algebraically universal, and in fact, $\kappa$-universal for all cardinals $\kappa$. This is achieved using the same construction as the one in Chapter IV, Theorem 3.5 of \cite{pultr}. 

 \begin{definition}\label{hcal}
     Henceforth, fix the graph
    \begin{figure}[H]
  \centering\small

  $\Hcal:=$
\begin{tikzcd}
                 &             & 2 \arrow[lldd] \arrow[dd] \arrow[rdd] &                 \\
                 &             &                                                   &                 \\
s \arrow[r] & 0 \arrow[r] & 1                                                 & t \arrow[l]
\end{tikzcd}

\end{figure}

and the simple graph gadget $(\Hcal,s,t)$. Clearly, $\Hcal$ is well-founded, and so is any graph $G \star \Hcal$.
 \end{definition}
 
 This will come up in several arguments, precisely because it satisfies condition \ref{2} of \Cref{th:system} in a strong sense. 

\begin{lemma}\label{lem:Hcal}
    For all $r \in \omega$ and graphs $G$, the only homomorphisms from $\Hcal^{(r)} \to G \star \Hcal^{(r)}$ are the maps $\phi_{u,v}^{\Hcal^{(r)},G}$ for $(u,v)\in E(G)$.
\end{lemma}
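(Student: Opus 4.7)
The plan exploits the structural feature of $\Hcal$ that there are three internally disjoint directed paths from $2$ to $1$ of distinct lengths $1$, $2$, and $3$ (namely $2\to 1$, $2\to t\to 1$, and $2\to s\to 0\to 1$), which in $\Hcal^{(r)}$ become paths of lengths $r+1$, $2(r+1)$, and $3(r+1)$. I write $2_{u,v}$, $0_{u,v}$, $1_{u,v}$ for the native internal copies of $2,0,1$ in the copy of $\Hcal^{(r)}$ attached at an edge $(u,v)\in E(G)$. First I would verify that $G\star\Hcal^{(r)}$ is a directed acyclic graph: each $2_{u,v}$ has in-degree zero (since $2$ is a source of $\Hcal$ and $2_{u,v}$ is internal), and tracing any in-chain backward terminates at some such source, so no directed cycle is possible. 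Consequently every directed walk in $G\star\Hcal^{(r)}$ is a simple directed path.

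The heart of the argument is to show that $h(2)=2_{u,v}$ for some $(u,v)\in E(G)$. Let $W_1,W_2,W_3$ denote the $h$-images of the three paths from $2$ to $1$ above, forming directed walks of lengths $r+1,2(r+1),3(r+1)$ from $h(2)$ to $h(1)$. To rule out $h(2)=w\in V(G)$: any choice of out-neighbour of $w$ as the image of the first subdivision on $2\to s$ starts a chain of out-edges, forcing $h(s)$ to be either a native $0_{w,v_1}$ (whence $h(0)=1_{w,v_1}$ is a sink, contradicting that $0\to(\text{first subdivision on }0\to 1)$ is an edge) or a sink $1_{w_1,w}$ itself. To rule out $h(2)$ being a subdivision point or $0_{u,v}$, one shows that the longest directed walk starting from any such vertex has length strictly less than $3(r+1)$, because any walk leaving $V(G)$ along an $s\to 0$ or $t\to 1$ subpath reaches the corresponding sink $1_{\cdot,\cdot}$ within at most $2(r+1)$ further steps; this contradicts the existence of $W_3$. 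Since the only internal vertex of $G\star\Hcal^{(r)}$ with out-degree $\geq 2$ is a native $2_{u,v}$, the claim follows.

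With $h(2)=2_{u,v}$, I would enumerate the endpoints of directed walks from $2_{u,v}$ of the three lengths: these form the sets $\{u,v,1_{u,v}\}$, $\{0_{u,v'},1_{w',u},0_{v,v''},1_{w'',v}\}$, and $\{1_{u,v'},1_{v,v''}\}$ respectively, where $v',w',v'',w''$ range over appropriate $G$-neighbours of $u$ and $v$. Their triple intersection is $\{1_{u,v}\}$, forcing $h(1)=1_{u,v}$. Moreover, the unique directed walks of lengths $r+1$, $2(r+1)$, $3(r+1)$ from $2_{u,v}$ to $1_{u,v}$ all lie entirely inside the copy $(u,v)$, and tracing them determines $h$ on every remaining native vertex and subdivision point. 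The resulting map coincides with $\phi_{u,v}^{\Hcal^{(r)},G}$.

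The main obstacle is the quantitative bound used to exclude $h(2)$ from being anything but a native $2_{u,v}$: one needs the fact that every directed walk starting from a $V(G)$-vertex in $G\star\Hcal^{(r)}$ has length at most $2(r+1)$, which is strictly less than the $3(r+1)$ required by $W_3$. Self-loops of $G$ require only minor modifications to the endpoint enumeration in the final step.
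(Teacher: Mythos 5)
Your argument is correct and is essentially the paper's: both proofs exploit the three internally disjoint directed $2$--$1$ paths of lengths $r+1$, $2(r+1)$, $3(r+1)$ together with the rigid structure of $G\star\Hcal^{(r)}$ to force $h(2)$ and $h(1)$ into a single copy and then trace those paths to recover $\phi_{u,v}^{\Hcal^{(r)},G}$; your maximum-path-length analysis for $h(2)$ merely spells out what the paper dispatches with the terse remark that $2$ is the unique source of the connected graph $\Hcal$. One caveat: for a loop $(u,u)$ of $G$ and $r\geq 1$ the conclusion itself fails (the $2\to t$ branch of $\Hcal^{(r)}$ can be rerouted along the $2\to s$ branch of the copy at $(u,u)$, since $s$ and $t$ are both identified with $u$), so loops are not a ``minor modification''---though the paper's proof has the same blind spot and the lemma is only ever applied to loop-free $G$.
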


\begin{proof}
    Let $f:\Hcal^{(r)} \to G \star \Hcal^{(r)}$ be a homomorphism. Since $\Hcal$ is connected and $2$ is the only element of $\Hcal$ without incoming edges, it follows that $f(2)=(2,u,v)$ for some $(u,v) \in E(G)$. Likewise, since $1$ is the only element of $\Hcal$ without outgoing edges it follows that $f(1)=(1,u',v')$ for some $(u',v') \in E(G)$. Observe that $G \star H$ has no loops or multiple edges, and so the graph path of length $r+1$ from $2$ to $1$ must map under $f$ to a graph path of length $r+1$ from $(2,u,v)$ to $(1,u',v')$. Hence, it follows that $(u,v)=(u',v')$. Similarly, the graph path of length $2(r+1)$ from $2$ to $1$ that passes through $t$ must necessarily be mapped under $f$ to a graph path of length $2(r+1)$ from $(2,u,v)$ to $(1,u,v)$. The only such graph path in $G \star \Hcal$ is the one passing through $(t,v)$, and so in particular $f(t)=(t,v)$. Finally, the graph path of length $3(r+1)$ from $2$ to $1$ passing through $s$ and $0$ must also be mapped to a graph path of length $3(r+1)$ from $(2,u,v)$ to $(1,u,v)$, and so by similar reasoning it follows that $f(s)=(s,u)$ and $f(0)=(0,u,v)$. Since all subdivision points of $\Hcal^{(r)}$ are mapped to the corresponding subdivision points in $G \star \Hcal^{(r)}$, it follows that $f = \phi_{u,v}^{\Hcal^{(r)},G}$. 
\end{proof}

As a consequence of the above, $\W\Gra$ is algebraically universal. 

\begin{corollary}
    The category of well-founded graphs is algebraically universal. Moreover, it is $\kappa$-universal for all infinite cardinals $\kappa$. 
\end{corollary}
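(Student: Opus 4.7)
The plan is to invoke Theorem~\ref{th:system} with the simple $\L$-gadget $(\Hcal, s, t, \emptyset, \emptyset, \emptyset)$, transporting $\kappa$-universality from a suitable subcategory $\Gfrak$ of $\Gra$ into $\W\Gra$. Fix any infinite cardinal $\kappa$; by the characterisation of $\kappa$-universality cited in the preliminaries, it suffices to construct a full embedding into $\W\Gra_{<\kappa}$ out of some $\kappa$-universal category.

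First, I would fix $\Gfrak$ to be a $\kappa$-universal subcategory of $\Gra$ whose objects have no isolated vertices. Such a category is readily available from the standard constructions in \cite{pultr}: the graphs used to encode algebras in Hedrlin--Pultr type full embeddings have every vertex incident to some edge (since each vertex represents an algebra element participating in the encoded operations), so these embeddings land in the subcategory of graphs without isolated vertices.

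Next, I would verify the three hypotheses of Theorem~\ref{th:system}. Condition (1) asks that $\Hcal$ and each $G \star \Hcal$ for $G \in \Gfrak$ lie in $\W\Gra$. This is the remark made in Definition~\ref{hcal}; it can be confirmed explicitly by defining a height function on $G \star \Hcal$ with values in $\{0,1,2,3\}$ (assigning $(u,v,1)\mapsto 0$, $(u,v,0)\mapsto 1$, native $u\in G\mapsto 2$, and $(u,v,2)\mapsto 3$), under which every directed edge strictly decreases height, whence the backward relation admits no infinite path. Condition (2) is precisely Lemma~\ref{lem:Hcal} applied with $r=0$. Condition (3) is immediate since $\W\Gra$ is a full subcategory of $\Gra$, so any graph homomorphism $f\star\Hcal$ between well-founded graphs automatically lies in $\morph(\W\Gra)$.

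Having established these, Theorem~\ref{th:system} produces a full embedding $G\mapsto G\star\Hcal$ from $\Gfrak$ into $\W\Gra$, and since $|\Hcal|<\omega$ this restricts to a full embedding $\Gfrak_{<\kappa}\to\W\Gra_{<\kappa}$. Combined with the $\kappa$-universality of $\Gfrak$, this gives $\kappa$-universality of $\W\Gra$ for every infinite $\kappa$, and algebraic universality as a consequence. The only step requiring genuine care is the first---pinning down a concrete $\kappa$-universal subcategory of graphs without isolated vertices---while the remaining verifications are direct applications of the machinery already developed in the preceding sections.
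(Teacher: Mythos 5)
Your proof is correct and follows essentially the same route as the paper: fix $\Hcal$ as a finite simple gadget, invoke Theorem~\ref{th:system} with condition (2) discharged by Lemma~\ref{lem:Hcal} at $r=0$, and conclude $\kappa$-universality for all infinite $\kappa$ from $|\Hcal|<\omega$. The only cosmetic difference is that the paper takes the source $\Gfrak$ to be the category $\mathbf{Con}\Gra$ of connected graphs (a standard algebraically universal category of graphs without isolated points, citing~\cite{pultr}), whereas you leave $\Gfrak$ as an unspecified Hedrlin--Pultr-type image class; both are backed by the same reference and serve the identical role.
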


\begin{proof}
  Consider the simple graph system $(\Hcal,s,t)$ from \Cref{hcal}. By standard facts, the category $\mathbf{Con}\Gra$ of connected graphs is algebraically universal and $\kappa$-universal for all infinite $\kappa$ (see \cite{pultr}). We thus argue that the functor $\Phi: \mathbf{Con}\Gra \to \W\Gra$ given by $G \mapsto G \star \Hcal$ is a full embedding. By \Cref{th:system}, it suffices to check that for any connected graph $G$, the only homomorphisms $\Hcal \to G \star \Hcal$ are the ones of the form $\phi_{u,v}^{H,G}$ for $(u,v) \in E(G)$. This follows by \Cref{lem:Hcal}. Finally, since $\Hcal$ is finite, this implies that $\W\Gra$ is $\kappa$-universal for all infinite $\kappa$. 
\end{proof}

\section{Directed gadgets and paths}\label{sec:directed}

Although somewhere density is a necessary condition for algebraic universality, it is evidently not sufficient as illustrated by the category of bipartite undirected graphs. Even in the presence of monotonicity, we can see that the full category of bipartite graphs with directed edges only from the left part to the right part has somewhere dense Gaifman class, is monotone, but is not algebraically universal. Nonetheless, after assigning a suitable orientation to the graphs in our category we may obtain an algebraically universal category of directed graphs. Here, we provide an analogue of this idea for relational structures, by introducing \emph{directed $\L$-structures}. These come with a natural functor into the category of directed graphs. 

\begin{definition}\label{defdirected}
    Let $M$ be a $\L$-structure, and write $V=\bigcup_{R \in \L}(\pi_0[R^M]\cup \pi_1[R^M])\subseteq M$ for the set of elements of $M$ appearing in the first two coordinates of the interpretation of any relation symbol in $M$. We say that $M$ is \emph{directed} if for all $u, v \in V$ there is at most one $R \in \L$ and at most one $\bar a \in R^M$ with $(u,v)=(\pi_0(\bar a),\pi_1(\bar a))$ or $(u,v)=(\pi_1(\bar a),\pi_0(\bar a))$, and moreover there is no $R \in \L$ and $\bar a \in R^M$ with $\pi_0(\bar a)=\pi_1(\bar a)$. 

    Given a directed $\L$-structure we define its \emph{arc graph}, denoted by $\Arc(M)$, as the directed graph with vertex set $V$ and edge set
    \[ E = \{(u,v) \in V^2 : \exists R\in \L, \bar a \in R^M \text{ such that } \pi_0(\bar a)=u \text{ and } \pi_1(\bar a)=v\}.\]
    \end{definition}
   
We write $\overrightarrow{\Str}(\L)$ for the category for directed $\L$-structures with homomorphisms. We call any $\L$-gadget $(M,\alpha,\beta,A,B,P)$ such that $M$ is directed a \emph{directed $\L$-gadget}, and write $\overrightarrow{\Gdg}(\L)$ for the category of directed $\L$-gadgets with $\L$-gadget homomorphisms.

Note that the choice of $\pi_0$ and $\pi_1$ is arbitrary, and any two distinct projections would give a suitable definition. However, there is a slight preference for this definition as a directed $\{E\}$-structure is a directed graph in the standard sense. We therefore call a directed $\{E\}$-gadget a \emph{directed graph gadget}, and write $\overrightarrow{\Gra\Gdg}$ for the category of directed graph gadgets. Moreover, it is easy to see that for any directed graph $G$, $\Arc(G)$ is the induced subgraph of $G$ on the vertices that are not isolated. Restricting to $\L$-structures without isolated points ensures that $\Arc$ is a faithful functor. 

\begin{lemma}\label{arc}
    $\Arc: \overrightarrow{\Str}(\L)^{-} \to \overrightarrow{\Gra}^-$ is a faithful functor from the category of directed $\L$-structures without isolated points and $\L$-homomorphisms to the category of directed graphs without isolated points and graph homomorphisms.
\end{lemma}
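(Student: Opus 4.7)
The plan is to verify the four standard items for $\Arc$ to be a faithful functor between the stated categories: well-definedness on objects, well-definedness on morphisms, functoriality, and faithfulness.

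First, I would check that $\Arc(M)$ really is an object of $\overrightarrow{\Gra}^-$. The vertex set of $\Arc(M)$ is $V_M=\bigcup_{R\in\L}(\pi_0[R^M]\cup\pi_1[R^M])$. Loops are forbidden by the second clause in \Cref{defdirected}. The uniqueness condition there ensures that for any $u,v\in V_M$ at most one edge exists between them in either direction, giving the asymmetry required of a directed graph by the paper's conventions. No vertex of $\Arc(M)$ is isolated: every $v\in V_M$ lies in position $0$ or $1$ of some tuple $\bar a\in R^M$, which immediately gives an incident edge in $\Arc(M)$.

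Next, for a morphism $f\colon M\to N$ in $\overrightarrow{\Str}(\L)^-$ I would put $\Arc(f):=f\restr V_M$. This is well-typed as a map $V_M\to V_N$: if $v\in V_M$ appears in the $i$-th coordinate ($i\in\{0,1\}$) of some $\bar a\in R^M$, then $f(\bar a)\in R^N$, so $f(v)=\pi_i(f(\bar a))\in V_N$. It is a directed graph homomorphism for the same reason: an edge $(u,v)\in E(\Arc(M))$ comes from some $\bar a\in R^M$ with $\pi_0(\bar a)=u$, $\pi_1(\bar a)=v$, and then $f(\bar a)\in R^N$ witnesses $(f(u),f(v))\in E(\Arc(N))$. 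Functoriality ($\Arc(\id_M)=\id_{\Arc(M)}$ and $\Arc(g\circ f)=\Arc(g)\circ\Arc(f)$) is immediate since restriction of the identity is the identity and restriction commutes with composition.

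The main point is faithfulness, and this is where the directedness hypothesis on $N$ is actually used. Suppose $f,g\colon M\to N$ are $\L$-homomorphisms with $\Arc(f)=\Arc(g)$; I want $f=g$. Pick any $x\in M$; since $M$ has no isolated points, $x$ appears in some tuple $\bar a\in R^M$. If $x\in V_M$ we are done by assumption, so suppose $x=\pi_i(\bar a)$ for some $i\geq 2$, and set $u=\pi_0(\bar a)$, $v=\pi_1(\bar a)$, both in $V_M$. Then $f(\bar a),g(\bar a)\in R^N$ are two tuples whose first two coordinates are both $(f(u),f(v))=(g(u),g(v))\in V_N^2$; the uniqueness clause of \Cref{defdirected} applied to $N$ forces $f(\bar a)=g(\bar a)$, whence $f(x)=g(x)$.

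The only nontrivial step is this last one: without the uniqueness in the definition of directed $\L$-structure, one could imagine two tuples of $N$ with identical first two entries but differing higher entries, defeating faithfulness. Everything else is a bookkeeping check of the definitions.
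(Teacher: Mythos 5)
Your proposal is correct and follows essentially the same route as the paper: define $\Arc(f)$ as the restriction of $f$ to $V_M$, verify the homomorphism property by pushing witnessing tuples through $f$, and obtain faithfulness from the uniqueness clause in the definition of a directed $\L$-structure applied to $N$ (you argue the contrapositive, the paper argues directly with a point where $f$ and $g$ differ, but the key use of directedness is identical). Your additional checks of object-level well-definedness and functoriality are routine and consistent with what the paper leaves implicit.
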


\begin{proof}
    Fix two directed $\L$-structures $M,N$ and a homomorphism $f: M \to N$. If $v \in V_M$, then by definition there is some $R_v \in \L$ and $\bar a_v \in R^M$ such that $v=\pi_i(\bar a_v)$ for $i \in \{0,1\}$. It follows that $f(\bar a_v) \in R^N$ and moreover $f(v)=\pi_i \circ f(\bar a_v)$, so $f(v) \in V_N$. Hence, the restriction of $f$ to $V_M$ gives a map 
    \[ \Arc(f) : \Arc(M) \to \Arc(N) \]
    We claim that this is a graph homomorphism. Indeed, if $(u,v) \in E(\Arc(M))$, then there is a (unique) $R \in \L$ and $\bar a \in R^M$ such that $u=\pi_0(\bar a), v=\pi_1(\bar a)$. Again, it follows that $f(\bar a) \in R^N$ and $f(u) =\pi_0\circ f (\bar a),f(v) = \pi_1\circ f(\bar a)$, so $(f(u),f(v))$ is an edge in $\Arc(N)$.
    
    Finally, we claim that $\Arc$ is injective on morphisms. Suppose that $M,N$ are directed without isolated points and $f,g:M \to N$ are two distinct homomorphisms, so there is some $c \in M$ such that $f(c)\neq g(c)$. If $c \in V_M$, then $\Arc(f)\neq \Arc(g)$. If on the other hand $c \notin V_M$, then there is some $R \in L$ and $\bar a \in R^M$ such that $c=\pi_i(\bar a)$ for some $i>1$. It follows that $f(\bar a)\neq g(\bar a)$. Letting $u=\pi_0(\bar a), v=\pi_1(\bar a)$ we see that $(f(u),f(v))\neq(g(u),g(v))$ since $N$ is directed. Since $u,v \in V_M$, it follows that $\Arc(f)\neq\Arc(g)$.
\end{proof} 

\begin{definition}
    We call a directed $\L$-gadget $(M,\alpha,\beta,A,B,P)$ with $\alpha,\beta \in \Arc(M)$ and $\Arc(M)\cap(A \cup B\cup P)=\emptyset$ an \emph{$\L$-system}. When $\L=\{E\}$, we call it a \emph{graph system}. We write $\Syst(\L)$ for the category of $\L$-systems with $\L$-gadget homomorphisms. We also extend $\Arc$ to a map on $\L$-gadgets, such that
    \[\Arc: (M,\alpha,\beta,A,B,P) \mapsto (\Arc(M),\alpha,\beta,\emptyset,\emptyset,\emptyset)\] 
\end{definition}

Although we omit a proof because it is not relevant to our purposes, an adaptation of \Cref{arc} shows that that $\Arc$ extends to a faithful functor on $\L$-systems without isolated points. Next, we show that $G\star M$ is a directed $\L$-structure whenever $G$ is a directed graph and $M$ is an $\L$-system.

\begin{lemma}
    Let $(M,\alpha,\beta,A,B,P)$ be a directed $\L$-gadget and $G$ a graph. Then for all $(u,v) \in E(G)$, $\phi_{(u,v)}^{G,M}:M \to G \star M$ restricts to a graph isomorphism 
    \[\phi_{(u,v)}^{G,M}:\Arc(M) \to \Arc(\phi_{(u,v)}[M]).\]
\end{lemma}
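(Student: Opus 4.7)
The plan is to leverage the fact, established earlier, that $\phi_{(u,v)}^{G,M}: M \to G \star M$ is an injective strong homomorphism of $\L$-structures. This means that, viewed as a map onto its image $\phi_{(u,v)}[M]$ with the induced $\L$-substructure inherited from $G \star M$, it is an isomorphism of $\L$-structures. In particular, since $M$ is directed, the image $\phi_{(u,v)}[M]$ is a directed $\L$-structure, so that $\Arc(\phi_{(u,v)}[M])$ is well-defined.

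Next I would show that $\phi_{(u,v)}^{G,M}$ sends the underlying vertex set $V_M$ of $\Arc(M)$ bijectively onto $V_{\phi_{(u,v)}[M]}$. Recall $V_N = \bigcup_{R \in \L}(\pi_0[R^N] \cup \pi_1[R^N])$ for any $\L$-structure $N$; since $\phi_{(u,v)}$ is a strong homomorphism, we have $R^{\phi_{(u,v)}[M]} = \phi_{(u,v)}[R^M]$ for each $R \in \L$, so
\begin{equation*}
V_{\phi_{(u,v)}[M]} = \bigcup_{R \in \L}\bigl(\pi_0[\phi_{(u,v)}[R^M]] \cup \pi_1[\phi_{(u,v)}[R^M]]\bigr) = \phi_{(u,v)}[V_M],
\end{equation*}
where the last equality uses that $\phi_{(u,v)}$ commutes with projections (as a pointwise map on tuples). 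Injectivity of $\phi_{(u,v)}$ then gives the bijection.

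Finally, I would verify that this bijection preserves and reflects edges of the arc graphs. If $(a,b) \in E(\Arc(M))$, there is $R \in \L$ and $\bar a \in R^M$ with $\pi_0(\bar a)=a, \pi_1(\bar a)=b$; then $\phi_{(u,v)}(\bar a) \in R^{\phi_{(u,v)}[M]}$, and since $\phi_{(u,v)}$ commutes with projections we obtain an arc from $\phi_{(u,v)}(a)$ to $\phi_{(u,v)}(b)$ in $\Arc(\phi_{(u,v)}[M])$. Conversely, any arc in $\Arc(\phi_{(u,v)}[M])$ arises from some tuple in $R^{\phi_{(u,v)}[M]}$, which by strongness equals $\phi_{(u,v)}[R^M]$, and pulling back along the injection recovers an arc in $\Arc(M)$. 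Hence the restriction is a graph isomorphism.

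There is no substantial obstacle; the content reduces to the observation that $\Arc$ is essentially functorial on injective strong homomorphic images, since the definitions of $V$ and $E(\Arc(-))$ are given in terms of the projections of the relations, which are transported faithfully by any strong injective homomorphism. The only point requiring care is confirming that $\phi_{(u,v)}[M]$ is directed so that $\Arc$ applies to it, and this follows immediately from the isomorphism with $M$.
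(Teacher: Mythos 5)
Your proposal is correct and follows essentially the same route as the paper, which simply observes that $\phi_{(u,v)}$ is an injective strong homomorphism, so $\phi_{(u,v)}[M]\iso M$ is directed and the arc graphs are carried over isomorphically. You merely spell out the details (the identification of $V_{\phi_{(u,v)}[M]}$ with $\phi_{(u,v)}[V_M]$ and the preservation/reflection of arcs) that the paper leaves as "trivial."
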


\begin{proof}
    Trivially, since $\phi_{(u,v)}$ is an injective strong homomorphism $\phi_{(u,v)}[M]\iso M$ is directed and they have isomorphic arc graphs.
\end{proof}

\begin{proposition}\label{directed}
    Let $G$ be a directed graph, and $(M,\alpha,\beta,A,B,P)$ an $\L$-system. Then $G \star M$ is directed. 
    
\end{proposition}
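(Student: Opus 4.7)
The plan is to verify both clauses of \Cref{defdirected} for $G \star M$ directly, using that each $\phi_{(u,v)}^{G,M}$ is an injective strong homomorphism, that $M$ is directed, and that the directed graph $G$ has no loops and no anti-parallel edges.

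The no-loops clause is the easier of the two. Suppose $\bar y \in R^{G \star M}$ with $\pi_0(\bar y) = \pi_1(\bar y)$. By the construction of $G \star M$, there are $(u,v) \in E(G)$ and $\bar x \in R^M$ with $\bar y = \phi_{(u,v)}^{G,M}(\bar x)$. Since $\phi_{(u,v)}$ acts coordinate-wise and is injective, this forces $\pi_0(\bar x) = \pi_1(\bar x)$, contradicting directedness of $M$.

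For the uniqueness clause, suppose $\bar y_i \in R_i^{G \star M}$, $i \in \{1,2\}$, share the same unordered pair of first two coordinates. Write $\bar y_i = \phi_{(u_i, v_i)}^{G,M}(\bar x_i)$ with $\bar x_i \in R_i^M$, and set $s_i := \pi_0(\bar x_i)$, $t_i := \pi_1(\bar x_i) \in \Arc(M)$. The structural observation driving the argument is that, since $M$ is an $\L$-system, $\Arc(M) \cap (A \cup B \cup P) = \emptyset$, so each of $s_i, t_i$ is either $\alpha$, $\beta$, or an element outside $\{\alpha, \beta\} \cup A \cup B \cup P$; under $\phi_{(u_i, v_i)}$ these three possibilities map respectively to $u_i \in G$, $v_i \in G$, or a triple $(u_i, v_i, \cdot)$. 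The identification $(u_1, v_1) = (u_2, v_2)$ then follows by case analysis: if some $s_1$ or $t_1$ lies outside $\{\alpha, \beta\}$, the triple form directly pins down $(u_1, v_1)$, so matching with $\bar y_2$ forces $(u_2, v_2) = (u_1, v_1)$; otherwise $\{s_1, t_1\} = \{\alpha, \beta\}$ (using absence of loops in $\Arc(M)$), the first two coordinates of $\bar y_1$ equal $\{u_1, v_1\} \subseteq G$, the same must hold for $\bar y_2$, and anti-symmetry of $G$ rules out $(u_2, v_2) = (v_1, u_1)$. Once $(u_1, v_1) = (u_2, v_2)$ is established, injectivity of $\phi_{(u_1, v_1)}$ gives $\{s_1, t_1\} = \{s_2, t_2\}$ as unordered pairs, and the second clause of directedness for $M$ applied to this pair yields $R_1 = R_2$ and $\bar x_1 = \bar x_2$, whence $\bar y_1 = \bar y_2$.

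The main subtlety lies in the vertex-only subcase $\{s_1, t_1\} = \{\alpha, \beta\}$, where triple matching is unavailable and anti-symmetry of $G$ must be invoked to exclude the anti-parallel option $(u_2, v_2) = (v_1, u_1)$; a related point is that the residual swap $(s_2, t_2) = (t_1, s_1)$ is eliminated only because the ``or swapped'' formulation of the uniqueness clause for $M$, combined with $\alpha \neq \beta$, forces $\bar x_1 = \bar x_2$.
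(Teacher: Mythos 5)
Your proof is correct. The approach is a fully elementary, tuple-by-tuple verification of both clauses of \Cref{defdirected}, branching on whether the first two coordinates $s_i,t_i$ of the preimage tuples lie in $\{\alpha,\beta\}$ or in the ``otherwise'' regime of $\phi_{(u,v)}$. The paper instead argues structurally: it identifies $V_{G\star M}$ with $\bigcup_{(u,v)\in E(G)}\Arc(\phi_{(u,v)}[M])$, writes out the pairwise intersection formula for these arc-graph copies (singleton, singleton, or empty when $(u,v)\neq(u',v')$, valid precisely because $G$ is anti-symmetric and loop-free), and then observes that any unordered pair of \emph{distinct} elements of $V_{G\star M}$ can be contained in at most one copy, so that directedness reduces to directedness of each copy $\phi_{(u,v)}[M]\iso M$. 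The two arguments prove the same content: your case analysis is essentially the pointwise unfolding of the paper's intersection formula. What the paper's packaging buys is brevity and reuse --- the same intersection computation appears again in the proof of \Cref{stargadget} --- whereas your version makes explicit exactly where anti-symmetry of $G$ and $\Arc(M)\cap(A\cup B\cup P)=\emptyset$ enter, which the paper treats as immediate from the formula. Your proof also explicitly handles the no-loops clause, which the paper leaves implicit (each $\phi_{(u,v)}$ is an injective strong homomorphism from the directed $M$).
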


\begin{proof}
     We claim that $V_{G\star M}=\bigcup_{R \in \L}(\pi_0[R^{G\star M}]\cup \pi_1[R^{G\star M}])$ satisfies the condition of Definition \ref{defdirected}. Indeed observe that $m \in V_{G \star M}$ if and only if there is and edge $(u,v) \in E(G), R \in \L$ and $\bar c \in R^M$ such that $\pi_i(\phi_{(u,v)}(\bar c))=m$, for some $i \in \{0,1\}$. Thus:

    \[ V_{G\star M} = \bigcup_{(u,v)\in E(G)} \Arc(\phi_{(u,v)}[M]).\]

    Now, notice that since $\alpha, \beta \in \Arc(M)$ it follows that $u,v \in \Arc(\phi_{(u,v)}[M])$. In addition, since $\Arc(M)$ does not intersect $A,B,P$ and $G$ is directed it holds that:

    \begin{equation*}\label{intersect}\Arc(\phi_{(u,v)}[M]) \cap \Arc(\phi_{(u',v')}[M]) =  
    \begin{cases}
       \Arc(\phi_{(u,v)}[M]), &\quad \text{if } (u,v)=(u',v');  \\
       \{u\}, &\quad \text{if } u=u' \land v\neq v' \\
       \{v\}, &\quad \text{if } u \neq u' \land v=v' \\
       \emptyset, &\quad \text{otherwise.}
    \end{cases}
    \end{equation*}

    Hence, $x\neq y$ implies that $\{x,y\}\not\subseteq\Arc(\phi_{(u,v)}[M]) \cap \Arc(\phi_{(u',v')}[M])$ for $(u,v)\neq (u',v')$. Consequently, given that each $\Arc(\phi_{(u,v)}[M])$ is directed, it follows that for all $x \neq y$ from $V_{G \star M}$ there is at most one $R \in \L$ and at most one $\bar a \in R^{G\star M}$ with $(u,v)=(\pi_0(\bar a),\pi_1(\bar a))$ or $(u,v)=(\pi_1(\bar a),\pi_0(\bar a))$. Hence, $G \star M$ is directed. 
\end{proof}

Hence, since $G\star M$ is directed, we may consider its arc graph. As it turns out, this is equal to the directed graph $G \star \Arc(M)$.

\begin{proposition}\label{arcstar}
 For a directed graph $G$ and an $\L$-system $(M,\alpha,\beta,A,B,P)$  
\[\Arc(G\star M) = G\star \Arc(M).\]

Moreover, for all $(u,v) \in E(G)$: 
\[ \Arc(\phi_{u,v}^{M,G}) = \phi_{u,v}^{\Arc(M),G}.\]
\end{proposition}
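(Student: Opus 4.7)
The plan is to prove both equalities by a direct unpacking of definitions, with the key observation being that the $\L$-system hypothesis forces $\phi_{u,v}^{G,M}$ to agree with $\phi_{u,v}^{G,\Arc(M)}$ on $\Arc(M)$, from which everything else falls out. I will actually establish the ``moreover'' claim first, since it is the main lemma, and then use it to deduce the equality of the arc graphs.

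For the ``moreover'' part, I will unpack the definition of $\phi_{u,v}^{G,M}$ restricted to the subset $\Arc(M) \subseteq M$. By hypothesis, $(M,\alpha,\beta,A,B,P)$ is an $\L$-system, which means $\alpha,\beta \in \Arc(M)$ and $\Arc(M)$ is disjoint from $A \cup B \cup P$. Consequently, an element $x \in \Arc(M)$ either lies in $\{\alpha,\beta\}$, in which case $\phi_{u,v}^{G,M}$ sends it to $u$ or $v$ respectively, or else $x \in M\setminus(\{\alpha,\beta\}\cup A\cup B\cup P)$, in which case $\phi_{u,v}^{G,M}(x) = (u,v,x)$. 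Reading off the definition of $\phi_{u,v}^{G,\Arc(M)}$ (which uses the trivial $\{E\}$-gadget structure on $\Arc(M)$ with empty $A,B,P$) shows that these assignments coincide exactly. Hence $\Arc(\phi_{u,v}^{G,M}) = \phi_{u,v}^{G,\Arc(M)}$ as set-maps; the fact that it is a graph homomorphism is already recorded in the lemma preceding \Cref{directed}.

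For the equality of arc graphs, I will verify the vertex sets and edge sets separately. On the vertex side, \Cref{directed} gives $V_{G\star M} = \bigcup_{(u,v)\in E(G)}\Arc(\phi_{u,v}^{G,M}[M])$, and the preceding lemma identifies $\Arc(\phi_{u,v}^{G,M}[M]) = \phi_{u,v}^{G,M}[\Arc(M)]$. Combined with the ``moreover'' claim above, this rewrites as $\bigcup_{(u,v)\in E(G)} \phi_{u,v}^{G,\Arc(M)}[\Arc(M)]$, which is exactly the underlying set of $G \star \Arc(M)$ (ignoring isolated vertices of $G$, which carry no arc and are absorbed into either side depending on convention). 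For the edges, I will use that $(x,y)$ is an edge of $\Arc(G\star M)$ iff there is some $R \in \L$ and $\bar a \in R^{G\star M}$ with $\pi_0(\bar a)=x,\,\pi_1(\bar a)=y$, which by the defining condition of $G\star M$ means there are $(u,v) \in E(G)$ and $\bar c \in R^M$ with $\phi_{u,v}^{G,M}(\bar c)=\bar a$. Since $\phi_{u,v}^{G,M}$ is strong on $M$ and commutes with projections, this reduces to having $(\pi_0(\bar c),\pi_1(\bar c)) \in E(\Arc(M))$ and $(x,y) = (\phi_{u,v}^{G,M}(\pi_0(\bar c)),\phi_{u,v}^{G,M}(\pi_1(\bar c)))$. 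Applying the ``moreover'' part, this is precisely the condition for $(x,y)$ to be an edge produced by the $\star$-construction from an edge of $\Arc(M)$ via $\phi_{u,v}^{G,\Arc(M)}$.

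I do not anticipate a real obstacle here; the proof is essentially a bookkeeping exercise. The one delicate point is making sure the $\L$-system hypothesis is used correctly so that $\Arc(M)$ as a subset of $M$ inherits the right gadget structure $(\Arc(M),\alpha,\beta,\emptyset,\emptyset,\emptyset)$ and that the two versions of $\phi_{u,v}$ really do agree on $\Arc(M)$; once this is in place, the rest is a matter of chasing elements through the disjoint-union definition of $G\star(-)$.
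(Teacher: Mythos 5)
Your proposal is correct and follows essentially the same route as the paper's proof: both are definition-chasing arguments that combine the vertex-set computation $V_{G\star M}=\bigcup_{(u,v)\in E(G)}\Arc(\phi_{(u,v)}[M])$ from \Cref{directed}, the restriction isomorphism $\phi_{(u,v)}\colon\Arc(M)\to\Arc(\phi_{(u,v)}[M])$, and the $\L$-system hypothesis to match the case-split defining $\phi_{u,v}$ on $\Arc(M)$ with that of $\phi_{u,v}^{\Arc(M),G}$. The only difference is cosmetic—you prove the ``moreover'' identity first and deduce the edge/vertex equalities from it, while the paper does the reverse—and your remark about isolated vertices of $G$ is a fair (and slightly more careful) reading of the same bookkeeping.
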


\begin{proof}
    By Lemma \ref{directed}, $G\star M$ is directed so $\Arc(G\star M)$ is well defined and equal to $\bigcup_{(u,v)\in E(G)} \Arc(\phi_{(u,v)}[M])$. Since $M$ is a system and $\Arc(\phi_{(u,v)}[M])\iso\Arc(M)$, this implies that
    \[ \Arc(G\star M) = G \sqcup \emptyset \sqcup \emptyset \sqcup \{(u,v,m): (u,v)\in E(G), m \in \Arc(M)\} \sqcup \emptyset.\]
    
    It follows that $\Arc(G\star M)$ and $G \star \Arc(M)$ are equal as sets. Moreover, $(x,y)$ is an edge in $\Arc(G\star M)$ if and only if there is an edge $(u,v) \in E(G)$ and an edge $(w,z) \in E(\Arc(M))$ such that $\phi_{(u,v)}(w,z)=(x,y)$, which by definition occurs if and only if $(x,y)$ is an edge in $G\star \Arc(M)$.

    For the second claim, recall that by definition $\Arc(\phi_{u,v}^{M,G})$ is the restriction of $\phi_{u,v}^{M,G}$ on $\Arc(M)$. Consequently, since $\Arc(M)$ is simple it follows that
    \[ \Arc(\phi_{u,v}^{M,G})(x) = 
    \begin{cases}
        u &\quad \text{if }x=\alpha; \\
        v &\quad \text{if }x=\beta; \\ 
        (x,u,v) &\quad \text{otherwise.}
    \end{cases}
    \]
    which is therefore equal to $\phi_{u,v}^{\Arc(M),G}$ by definition. 
\end{proof}

Put together, all the above imply the following lemma which is crucially used in \Cref{sec:monotone}.

\begin{lemma}\label{lem:ostar}
    Let $G$ be a directed graph, $(H,s,t)$ a simple graph system, and $(M,\alpha,\beta,A,B,P)$ and $\L$-system. Then $H \ostar M$ is an $\L$-system, and moreover
    \[
    \Arc(G\star (H\ostar M)) \iso (G\star H)\star \Arc(M). 
    \]
\end{lemma}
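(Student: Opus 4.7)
The plan is to verify the two claims in sequence, using Propositions~\ref{arcstar} and \ref{stargadget} as the main engines. The argument is almost entirely bookkeeping: keeping track of which gadget structure is being applied at each step.

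First I would check that $H \ostar M$ is an $\L$-system. Since $H$ is a directed graph (being a graph system) and $M$ is an $\L$-system, Proposition~\ref{directed} gives that $H \star M$ is a directed $\L$-structure, so $H \ostar M = (H \star M, s, t, A', B', P)$ is a directed $\L$-gadget. To see that the distinguished points lie in the arc graph and that $A' \cup B' \cup P$ avoids it, I would invoke Proposition~\ref{arcstar} to identify $\Arc(H \star M) = H \star \Arc(M)$. Unpacking the definition of $G \star M$ with $\Arc(M)$ (which is simple, i.e.\ has $A=B=P=\emptyset$), the elements of $H \star \Arc(M)$ are exactly the vertices of $H$ together with triples $(u,v,m)$ for $(u,v) \in E(H)$ and $m \in \Arc(M) \setminus \{\alpha,\beta\}$. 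Since $s, t \in V_H \subseteq \Arc(H \star M)$, both endpoints are in the arc graph, while elements of $A'$ and $B'$ are pairs and elements of $P \subseteq M \setminus \Arc(M)$ lie outside $\Arc(H\star M) = H\star\Arc(M)$. Hence $\Arc(H\star M) \cap (A' \cup B' \cup P) = \emptyset$, and $H \ostar M$ is an $\L$-system.

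For the isomorphism, I would chain the two propositions together. Applying Proposition~\ref{arcstar} to the directed graph $G$ and the $\L$-system $H \ostar M$ gives
\[ \Arc(G \star (H \ostar M)) = G \star \Arc(H \ostar M). \]
By the extended definition of $\Arc$ on gadgets and a second use of Proposition~\ref{arcstar},
\[ \Arc(H \ostar M) = (\Arc(H \star M), s, t, \emptyset, \emptyset, \emptyset) = (H \star \Arc(M), s, t, \emptyset, \emptyset, \emptyset). \]
On the other hand, since $\Arc(M)$ has empty $A, B, P$ sets, the definition of $\ostar$ yields $H \ostar \Arc(M) = (H \star \Arc(M), s, t, \emptyset, \emptyset, \emptyset)$, so in fact $\Arc(H \ostar M) = H \ostar \Arc(M)$ as $\L$-gadgets. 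Finally, applying Proposition~\ref{stargadget} to the graph $G$, the simple graph gadget $(H, s, t)$, and the $\L$-gadget $\Arc(M)$ gives $(G \star H) \star \Arc(M) \iso G \star (H \ostar \Arc(M))$. Combining:
\[ \Arc(G \star (H \ostar M)) = G \star \Arc(H \ostar M) = G \star (H \ostar \Arc(M)) \iso (G \star H) \star \Arc(M). \]

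The only genuine obstacle is the identification $\Arc(H \ostar M) = H \ostar \Arc(M)$, which requires checking that the ``shell'' structure $(s, t, A', B', P)$ put on $H \star M$ to form $H \ostar M$ is compatible with the arc-graph construction: its distinguished pointed data $s, t$ survive into $\Arc(H \star M)$, while the sets $A', B', P$ are killed both by passing to $\Arc$ (since they avoid $\Arc(H \star M)$) and by the $\ostar$-construction applied to $\Arc(M)$ (since $\Arc(M)$'s own $A, B, P$ are empty). Once this compatibility is in place, the main computation is simply the composition of two already-established isomorphisms.
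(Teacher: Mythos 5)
Your proof is correct, and it uses the same two engines as the paper (Propositions~\ref{directed}, \ref{arcstar} and \ref{stargadget}), but it chains them in the opposite order. The paper first applies Proposition~\ref{stargadget} at the level of $\L$-structures to get $G \star (H \ostar M) \iso (G \star H) \star M$, and then applies Proposition~\ref{arcstar} a single time, with the directed graph $G \star H$ and the system $M$, to conclude $\Arc((G\star H)\star M) \iso (G\star H)\star \Arc(M)$; your route instead pushes $\Arc$ through the outer star first (Proposition~\ref{arcstar} with $G$), commutes $\Arc$ past $\ostar$ via the identity $\Arc(H \ostar M) = H \ostar \Arc(M)$ (which needs the second application of Proposition~\ref{arcstar}, with $H$), and only then applies Proposition~\ref{stargadget} with the gadget $\Arc(M)$. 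Both are sound; the paper's version is shorter and avoids the intermediate gadget identification, while yours isolates the reusable commutation fact $\Arc(H \ostar M) = H \ostar \Arc(M)$, which you correctly flag as the only point needing care, and your verification of it (endpoints $s,t$ survive into the arc graph, while $A'$, $B'$, $P$ are disjoint from it) is essentially the paper's own argument that $H \ostar M$ is an $\L$-system. One shared caveat: Proposition~\ref{stargadget} is stated with the hypothesis that $s$ is the first coordinate of some edge and $t$ the second coordinate of some edge of $H$, which is not literally implied by ``simple graph system''; your proof and the paper's rely on it equally (and it holds in the paper's application to $\Hcal$), so this is not a gap specific to your argument.
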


\begin{proof}
    By Lemma \ref{directed}, $H\star M$ is a directed $\L$-structure with $\Arc(G\star M) = \bigcup_{(u,v)\in E(G)}\Arc(\phi_{u,v}[M])$. Since $\alpha,\beta \in \Arc(M)$, it follows that $s,t \in \Arc(G \star M)$, while $\Arc(M)\cap(A\cup B\cup P)=\emptyset$ implies that 
    \[ \Arc(G\star M)\cap (\{(s,a):a \in A\}\cup \{(t,b):b\in B\}\cup P)=\emptyset.\]
    So $H \ostar M$ is in fact an $\L$-system. 
    By Proposition \ref{stargadget}, it follows that 
    \[G \star (H\ostar M) \iso (G\star H)\star M,\]
    and so by Theorem \ref{arcstar}
    \[ \Arc(G\star (H\star M))\iso \Arc((G\star H)\star M) \iso (G\star H) \star \Arc(M).\]
\end{proof}

In the remainder of this section we introduce relational paths. As later shown in \Cref{th:main}, these are precisely the structures that we use as arrows to define a functor into a monotone somewhere dense category. To these, we may naturally assign a ``direction'' and obtain an equivalent directed $\L$-structure. We note that a variant of relational paths also appears in \cite{monotoneNIP}, but our definition here is less restrictive. 

\begin{definition}\label{def:path}
By an \emph{$\L$-path of length $n$} we mean an $\L$-structure $M$ together with an injective function $p:n+1 \to M$ such that there is a sequence of tuples $\bar e_1, \dots, \bar e_n$ from $M$ satisfying: 
\begin{itemize}
    \item $M=\bigcup_{i \in [n]}\bar e_i$;
    \item $p(0) \in \bar e_1\setminus \bar e_2, p(n) \in \bar e_n\setminus \bar e_{n-1}$, and $p(i)\in \bar e_i\cap e_{i+1}$ for all $i \in [n-1]$;
    \item $R^M_i(\bar e_i)$, for some unique relation symbol $R_i \in \L$;
    \item $R^M(\bar a) \implies \bar a = \bar e_i$ for some $i \in [n]$, for all relation symbols $R \in \L$ and all tuples $\bar a \in M$;
\end{itemize}
We refer to the tuples $\bar e_i$ as the \emph{steps} of the path, and to the elements $p(i)$ for $i \in [n-1]$ as the \emph{joints} for $M$. We also refer to the map $p:n+1 \to M$ as the \emph{path function} of the path.
\end{definition}

Note that technically, no undirected graph $G$ can be an $\L$-path under the above definition. Indeed, the last condition ensures that $E(G)$ cannot be symmetric as no permutation of a tuple appearing in a relation $R$ can appear in any other relation from $\L$. To avoid confusion, we always refer to paths in the standard graph-theoretic sense as \emph{graph paths}. 

\begin{lemma}\label{lem:permutation}
    Let $(M,p)$ be an $\L$-path of length $n$. Then there is a a directed $\L$-structure $\tilde{M}$ that is permutation equivalent to $M$, $(\tilde{M},p)$ is an $\L$-path, and $\Arc(\tilde{M})$ is the directed graph path $p(0), p(1),\dots,p(n)$. 
\end{lemma}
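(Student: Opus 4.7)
The plan is to construct $\tilde M$ by permuting each step of the path so that its first two coordinates are the two joints/endpoints that the step connects. Concretely, for every $i \in [n]$ the step $\bar e_i$ contains both $p(i-1)$ and $p(i)$ (by the definition of an $\L$-path), and these two elements are distinct since $p$ is injective. Because the relation symbol $R_i$ associated to $\bar e_i$ is unique, I can choose a permutation $\sigma_i \in \Scal_{\ar(R_i)}$ with the property that $\pi_0(\sigma_i(\bar e_i)) = p(i-1)$ and $\pi_1(\sigma_i(\bar e_i)) = p(i)$. I then define $\tilde M$ to have the same underlying set as $M$ and to interpret each $R \in \L$ as $R^{\tilde M} := \{\sigma_i(\bar e_i) : i \in [n] \text{ and } R_i = R\}$.

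Next I would verify the three requirements in turn. First, permutation equivalence with $M$ is immediate: the identity map $M \to \tilde M$ together with the permutations $\sigma_i$ witnesses it, and it is well defined because the last bullet in \Cref{def:path} forces every tuple in any $R^M$ to equal some $\bar e_i$. Second, $(\tilde M, p)$ is still an $\L$-path with the same path function: the steps are $\tilde e_i := \sigma_i(\bar e_i)$, which as sets equal $\bar e_i$, so all four bullets of \Cref{def:path} transfer unchanged. In particular the uniqueness-of-relation condition still holds because permuting a tuple does not change which symbol it belongs to.

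Third, and most delicate, is directedness. By construction $V_{\tilde M} = \{p(0), \dots, p(n)\}$, and the only tuples in relations are the $\tilde e_i$'s, with $(\pi_0(\tilde e_i), \pi_1(\tilde e_i)) = (p(i-1), p(i))$. Injectivity of $p$ forces $p(i-1) \neq p(i)$ for each $i$, so no tuple has $\pi_0 = \pi_1$. For the uniqueness-of-edge condition I need to rule out two distinct indices $i \neq j$ yielding the same unordered pair of projections. The case $(p(i-1), p(i)) = (p(j-1), p(j))$ is ruled out immediately by injectivity, while the reversed case $(p(i-1), p(i)) = (p(j), p(j-1))$ would force $i-1 = j$ and $i = j-1$ simultaneously, which is impossible. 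Hence $\tilde M$ satisfies \Cref{defdirected}.

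Finally, from the above computation $\Arc(\tilde M)$ has vertex set $\{p(0), \dots, p(n)\}$ and edge set exactly $\{(p(i-1), p(i)) : i \in [n]\}$, which is the directed graph path $p(0), p(1), \dots, p(n)$, as required. I do not anticipate a serious obstacle; the only point demanding care is the directedness check, which relies essentially on the injectivity of the path function $p$.
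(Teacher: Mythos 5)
Your proposal is correct and follows essentially the same route as the paper: choose for each step $\bar e_i$ a permutation $\sigma_i$ placing $p(i-1)$ and $p(i)$ in the first two coordinates, let $\tilde M$ interpret each relation by the permuted steps, and verify permutation equivalence, the path property, directedness (via injectivity of $p$), and the form of $\Arc(\tilde M)$. Your directedness check is in fact slightly more explicit than the paper's, which states the uniqueness-of-edge condition without the short case analysis you give.
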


\begin{proof}
    Let $(M,p)$ be as above and let $\bar e_i$ be its steps. For each $i \in n+1$, fix some permutation $\sigma_i \in S_{|\bar e_i|}$ such that $\pi_0(\sigma_i(\bar e_i))=p(i-1)$ and $\pi_1(\sigma_i(\bar e_i))=p(i)$. Let $\tilde{M}$ be the $\L$-structure on the same domain as $M$, satisfying for all $R \in \L$ and $\bar a \in \tilde{M}$:
    \[ \bar a \in R^{\tilde M} \iff \bar a=\sigma_i(\bar e_i) \text{ for some } i \in [n] \text{ and } \bar e_i \in R^M.\]
    It is clear that $(\tilde{M},p)$ is still a path of length $n$ with steps $\sigma_i(\bar e_i)$ for $i \in [n]$. Moreover, $M$ and $\tilde M$ are by definition permutation equivalent. We argue that $\tilde{M}$ is a directed $\L$-structure. Indeed, by construction $\bigcup_{R \in \L}(\pi_0[R^M]\cup \pi_1[R^M])=\{p(i):i \in n+1\}$. Furthermore, it follows that for any two $i\neq j$ from $n$, there is at most one step $\bar e$ of $\tilde{M}$ such that $(p(i),p(j)=(\pi_0(\bar e),\pi_1(\bar e))$ or $(p(i),p(j))=(\pi_1(\bar e),\pi_0(\bar e))$. Since the only tuples from $\tilde{M}$ appearing in a relation $R^{\tilde{M}}$ are its steps, this implies that $\tilde{M}$ is directed. Finally, it is easy to see that 
    \[ E(\Arc(\tilde{M}))=\{(p(i),p(i+1)):i \in n-1\}\]
    as required.
\end{proof}

Given an $\L$-structure and a graph path in $\Gaif(M)$, we may produce an $\L$-path that describes a  ``type'' for this graph path. This idea is captured by the following definition.

\begin{definition}[Path type]\label{def:pathtype}
    Let $N$ be an $\L$-structure, and $u_0,\dots,u_n$ a graph path in $\Gaif(N)$. For every $i \in n$ we may associate a relation symbol $R_i \in \L$, elements $v_{i,1},\dots,v_{i,\ar(R_i)}$ of $N$, and a permutation $\sigma_i \in S_{\ar(R_i)}$ such that $\sigma_i(u_i,u_{i+1},\bar v_i) \in R_i^N$. Letting $M=\{u_i:i \in n+1\}\cup\{v_{i,j}: i \in n, j \in [\ar(R_i)]\}$, we define an $\L$-structure on $M$ such that for all $R \in \L$ and tuples $\bar a$ from $T$:
    \[ \bar a \in R^M \iff \bar a = \sigma_i(u_i,u_{i+1},\bar v_i) \text{ and }  R=R_i \text{ for some } i \in n .\]
    Letting $p: n+1 \to M$ be the injective map $i \mapsto u_i$, it is easy to see that $(M,p)$ is a path of length $n$. We call the tuple $(M,p)$ a \emph{path type} in $M$ for the graph path $u_0,\dots, u_n$.  
\end{definition}

Observe that that whenever $S=(u_0,\dots,u_n)$ is a graph path in $\Gaif(N)$, then there is a path type $(M,p)$ for $S$ in $N$ and that the identity map $M \to N$ is an injective homomorphism. Clearly, the path type is not uniquely determined by $S$, as for the same graph path $u_1,\dots,u_n$ in $\Gaif(N)$ we can possibly obtain different sequences of relations $R_i,\dots R_{i-1}$ and permutations $\sigma_1,\dots,\sigma_{i-1}$ to satisfy Definition \ref{def:pathtype}.

\section{Monotone algebraically universal categories}\label{sec:monotone}

Here we establish the partial converse to \Cref{cor:swd}. As previously illustrated, we cannot hope for a full converse, not even in the presence of monotonicity. This is overcome by working with an orientation $\tilde \Cfrak$ of $\Cfrak$. By picking an appropriate such orientation we can make sure that the structures in $\tilde \Cfrak$ are directed, and thus, applying the $\Arc$ functor allows us to reduce the argument to the graph case which is simpler to handle.

Once again, we have opted for a relativised proof to illustrate the exact levels where our theorem holds. Much like with \Cref{th:smwdense}, it shall be clear that the proof of the relativised statement subsumes the proof of the absolute one. We note that the regularity of $\kappa$ is only used to ensure that any language $\L$ of size $<\kappa$ works. For finite languages, the claim also holds at the level of strong limit cardinals. We begin with the following proposition, which is essentially a strengthening of Theorem 4.3 in \cite{monotoneNIP}. 

\begin{proposition}\label{th:main}
Let $\kappa$ be $\omega$ or an inaccessible cardinal and $\Cfrak$ a monotone subcategory of $\StrL$ such that $\Gaif(\Cfrak)$ is $\kappa$-somewhere dense and $|\L|<\kappa$. Then there is a finite $\L$-gadget $(M,\alpha,\beta,A,B,P)$ such that $G \star M \in \obj(\Cfrak)$ for every well-founded graph $G$ of size $<\kappa$. Moreover, there is $r \in \omega$ and a map $p:r+1 \to M$ such that:
\begin{enumerate}
    \item $(M,p)$ is an $\L$-path;
    \item\label{cond:2} $p(0)=\alpha$ and $p(r)=\beta$;
    \item\label{cond:3} $p(i) \notin (A\cup B\cup P)$ for all $i \in r+1$.
\end{enumerate}
\end{proposition}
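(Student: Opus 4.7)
The plan is to apply an iterated Ramsey reduction to the $r'$-subdivided complete graphs guaranteed by $\kappa$-somewhere density, and then read off the required gadget structure from the resulting canonical colouring types, mirroring the argument of \Cref{th:smwdense} but with an eye toward producing a uniform template rather than merely detecting somewhere density.

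By the pigeonhole consequence of $\kappa$-somewhere density noted after its definition, fix $r' \in \omega$ so that for every cardinal $\lambda < \kappa$ there exists $N_\lambda \in \obj(\Cfrak)$ with $K_\lambda^{r'} \subseteq \Gaif(N_\lambda)$. Enumerate the native vertices of this subdivided clique by $\lambda$; for each pair $i < j$ in $\lambda$, the corresponding subdivision graph path in $\Gaif(N_\lambda)$ is the Gaifman trace of some $\L$-path type $(T^\lambda_{i,j}, p^\lambda_{i,j})$ of length $r := r'+1$ in $N_\lambda$ (\Cref{def:pathtype}). Colour pairs by the pointed isomorphism type of $(T^\lambda_{i,j}, p^\lambda_{i,j})$; since $|\L|<\kappa$ and all arities are finite, the colour space has cardinality $<\kappa$, and so \Cref{cor:erdos} (or finite Ramsey when $\kappa=\omega$) combined with \Cref{fact:pigeon} to coordinate across $\lambda$ yields a cofinal subsequence of $\lambda$'s and arbitrarily large monochromatic subsets, realising a common abstract $\L$-path $(T,p)$ of length $r$ for every pair. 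Next, for each of the finitely many positions $x \in T$, apply the canonical Erdős--Rado theorem to $(i,j) \mapsto h^\lambda_{i,j}(x)$, where $h^\lambda_{i,j}(x) \in N_\lambda$ denotes the image of $x$ under the inclusion $T \cong T^\lambda_{i,j} \hookrightarrow N_\lambda$; iterating over positions with a pigeonhole across $\lambda$ after each round, arrange that every $x \in T$ has a fixed canonical type $t(x) \in \{1,2,3,4\}$ uniformly in $\lambda$. A final application of \Cref{lem:compatible} (plus straightforward shrinking for position pairs of mixed canonical type) ensures that, on a further restriction, $h^\lambda_{i,j}(x) = h^\lambda_{k,\ell}(y)$ with $(i,j) \neq (k,\ell)$ forces $x = y$.

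Now set $M := T$ as an $\L$-structure, $\alpha := p(0)$, $\beta := p(r)$, and partition the remaining positions by canonical type: $P := \{x : t(x) = 1\}$, $A := \{x : t(x) = 2\}$, $B := \{x : t(x) = 3\}$, with the type-$4$ positions forming the ``middle''. Then $M$ is finite, $(M,\alpha,\beta,A,B,P)$ is an $\L$-gadget (note $\alpha$ and $\beta$ themselves behave as type $2$ and $3$ elements and so are legitimately excluded from $A \cup B \cup P$), and $(M,p)$ is an $\L$-path, giving conditions~(1) and~(2). For condition~(3), each intermediate joint $p(i)$ with $0 < i < r$ is a subdivision point of $K_\lambda^{r'}$, and the subdivision points attached to distinct pairs of native vertices are distinct by the definition of subgraph embedding, forcing $t(p(i)) = 4$ and hence $p(i) \notin A \cup B \cup P$.

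Finally, let $G$ be a well-founded graph of size $<\kappa$. By the characterisation established in \Cref{sec:wellfdd} there is an injective homomorphism $\sigma:G \to (\alpha^*, \in)$ for some ordinal $\alpha^* < \kappa$. Choose $\lambda < \kappa$ large enough that the subset of native indices extracted above has order type $\geq \alpha^*$, and view $\sigma$ as an injection of $V(G)$ into it; well-foundedness of $G$ forces $\sigma(u) < \sigma(v)$ for every edge $(u,v)$. For each such edge set $f_{u,v}:M \to N_\lambda$ to be the injective homomorphism $T \cong T^\lambda_{\sigma(u),\sigma(v)} \hookrightarrow N_\lambda$. The canonical types at every position, combined with the final compatibility shrinking, guarantee exactly the intersection pattern required by \Cref{lem:injective}, which then produces an injective homomorphism $G \star M \to N_\lambda$; monotonicity of $\Cfrak$ yields $G \star M \in \obj(\Cfrak)$. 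The main obstacle is the bookkeeping in the Ramsey-canonical reduction, namely keeping the finitely many canonical types and the compatibility across positions uniform on a cofinal sequence of $\lambda$'s; this is exactly where regularity of $\kappa$ (when $\kappa$ is inaccessible) and the hypothesis $|\L| < \kappa$ are used to keep every colour space bounded below $\kappa$.
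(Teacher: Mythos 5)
Your proposal follows essentially the same route as the paper's own proof: extract the subdivided cliques witnessing $\kappa$-somewhere density, uniformise the path types by Ramsey plus pigeonhole across the cardinals $\lambda$, apply the canonical Erd\H{o}s--Rado theorem position-by-position to read off $A$, $B$, $P$ from the canonical types, use \Cref{lem:compatible} for cross-pair distinctness, and conclude via \Cref{lem:injective} and monotonicity. The only cosmetic deviations are that you canonise the joint positions as well (the paper treats the on-path elements separately) and that you apply \Cref{lem:injective} directly to $G$ rather than to $(\lambda,\in)$ and then pulling back along an injective homomorphism $G\to(\lambda,\in)$; for the latter note that \Cref{lem:injective} assumes no isolated points, so either treat isolated vertices of $G$ separately or factor through $\lambda\star M$ as the paper does.
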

\begin{proof}
If $\Gaif(\C)$ is $\kappa$-somewhere dense, then there exists $r \in \omega$ such that for all cardinals $\lambda < \kappa$ there is some $N_\lambda \in \obj(\Cfrak)$ with $K_\lambda^r \leq \Gaif(N_\lambda)$. Label the elements of $N_\lambda$ corresponding to the native vertices of $K^r_\lambda$ by $(a^\lambda_i)_{i<\lambda}$. For every $i < j$ from $\lambda$ let $S^\lambda_{i,j}$ be the graph path in $\Gaif(N_\lambda)$ corresponding to the $r$-subdivision of the edge $(i,j)$ from $K_\lambda$, directed from $a_i$ to $a_j$. Let $q <\kappa$ be the maximum arity of a relation symbol $R \in \L$. Observe that there are at most $p=(|\L|\cdot q!)^{r+1}\cdot2^{r^2}$ path types up two isomorphism for each graph path $S^\lambda_{i,j}$. By \Cref{cor:erdos} we may find for each $\lambda$ some $\Sigma_\lambda \subseteq \Rcal(2,\lambda,p)$ of size $\lambda$ such that $S^{\Rcal(2,\lambda,p)}_{i,j}$ have the same path type for all $i<j$ from $\Sigma_\lambda$. By passing to a subsequence of $(N_\lambda)_{\lambda < \kappa}$ and relabelling indices, we may therefore assume that all the $S^\lambda_{i,j}$ have the same path type up to isomorphism. Let this be $(M_\lambda,p_\lambda)$. Since the number of path types is $<\kappa$ and $\kappa$ is regular, \Cref{pigeonhole} implies that we may prune the sequence $(N_\lambda)_{\lambda < \kappa}$ once again to ensure that the same path type $(M,p)$ is obtained for all $\lambda < \kappa$. 

It follows by definition that for every $i<j \in \lambda$ there is an injective homomorphism $f^\lambda_{i,j}:M \to N_\lambda$ such that $f^\lambda_{i,j}(p(n))=S^\lambda_{i,j}(n)$ for all $n \in r+1$. Let $J=\{p(n):n \in r+1\}$ and consider $H = M \setminus J$ with $|H|=m$. By $m$ applications of \Cref{cor:erdos}, we may assume for all $x \in H$ and $i<j$, $k<l$ from $\lambda$ that whether $f^\lambda_{i,j}(x)=f^\lambda_{k,l}(x)$ depends on one of the four canonical cases from that theorem. Indeed, for every $\lambda < \kappa$ and each $x \in H$, define colourings $\chi_{\lambda,k}(i,j) = f^\lambda_{i,j}(x)$ of the two element subsets $\{i,j\}$ with $i<j$ of $\lambda$. It follows that $\Kcal^m(\lambda)<\kappa$ contains a subset $C_\lambda$ of order type $\lambda$ on which $\chi_{\Kcal^m(\lambda),x}$ is canonical for all $x \in H$. We may thus restrict the argument to the subsequence $(N_{\Kcal^m(\lambda)})_{\lambda < \kappa}$ and the maps $f^{\Kcal^m(\lambda)}_{i,j}$ for $i<j \in C_\lambda$, and relabel appropriately. For every $\lambda < \kappa$, after the relabelling, we have thus obtained a tuple $t_\lambda \in [4]^H$ such that $\chi_{\lambda,x}$ is canonical of type $t_\lambda(x)$. Since there are only finitely many such $t_\lambda$, by the pigeonhole principle we may consider a subsequence of $(N_\lambda)_{\lambda < \kappa}$ for which $t_\lambda$ is constant and equal to some $t\in [4]^H$, and relabel once more. 

We now proceed to turn the path $M$ into a suitable $\L$-gadget. Indeed, define the following subsets of $M$:
\begin{align*}
    P = \{ x \in H: t(x) = 1\}; \\
    A = \{ x \in H: t(x) = 2\}; \\
    B = \{x \in H: t(x) = 3 \}; \\
    H' = \{x \in H: t(x) = 4 \}.
\end{align*}
By the above, it follows that $f^\lambda_{i,j}(x)\neq f^\lambda_{k,l}(x)$ for all $(i,j)\neq (k,l)$ and $x \in J \cup H'$. Hence, by Lemma \ref{lem:compatible} there is a subset $I_\lambda$ of order type $\lambda$ such that, for all $i<j,k<l$ from $I_\lambda$ with $(i,j)\neq(k,l)$, $f^\lambda_{i,j}(x)\neq f^\lambda_{k,l}(y)$ for all $x,y \in J \cup H$ and $x\neq y$. It therefore follows that for all $i<j$, $k<l$ from $I_\lambda$,\ $f^\lambda_{i,j}(x)= f^\lambda_{k,l}(x)$ if, and only if $x=y$ and one of the following holds:
\begin{enumerate}
        \item $(i,j)=(k,l)$ and $x \in M$;
        \item $i = k, j \neq l$ and $x \in \{p(0)\}\cup A\cup P$;
        \item $i \neq k, j = l$ and $x \in \{p(r)\}\cup B \cup P$;
        \item $i \neq k, j \neq l$ and $x \in P$.

\end{enumerate}

Letting $\alpha=p(0)$ and $\beta \in p(r)$ and considering the $\L$-gadget $(M,\alpha,\beta,A,B,P)$, Lemma \ref{lem:injective} implies that there is an injective homomorphism from $\lambda \star M$ into $N_\lambda$. In particular, there is an injective homomorphism from $G \star M$ into $N_\lambda$ for every well-founded graph of size $<\lambda$. Since this holds for all infinite cardinals $\lambda<\kappa$ and $\Cfrak$ is monotone, it follows that $G \star M$ is in $\obj(\Cfrak)$ for every well-founded graph of size $<\kappa$. 
\end{proof} 

By considering an orientation of our category, we may ensure that the $\L$-gadget obtained above is in fact an $\L$-system whose arc graph is a path.

\begin{lemma}\label{lem:uniform}
    Let $\kappa$ be $\omega$ or an inaccessible cardinal and $\Cfrak$ a monotone subcategory of $\StrL$ such that $\Gaif(\C)$ is $\kappa$-somewhere dense and $|\L|<\kappa$. Then there is a full orientation $\tilde \Cfrak$ of $\Cfrak$ and a finite  $\L$-system $(M,\alpha,\beta,A,B,P)$ such that $G \star M \in \obj(\tilde \Cfrak)$ for every well-founded graph $G$ of size $<\kappa$. Moreover, $\Arc(M)$ is a directed graph path from $\alpha$ to $\beta$. 
\end{lemma}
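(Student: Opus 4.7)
The plan is to combine Proposition \ref{th:main} with Lemma \ref{lem:permutation}, and then lift the resulting permutation equivalence from the gadget to every structure of the form $G\star M_0$.

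First, I would apply Proposition \ref{th:main} to obtain a finite $\L$-gadget $(M_0,\alpha,\beta,A,B,P)$ together with a path function $p\colon r+1\to M_0$ making $(M_0,p)$ an $\L$-path such that $p(0)=\alpha$, $p(r)=\beta$, and $p(i)\notin A\cup B\cup P$ for all $i\in r+1$, and such that $G\star M_0\in\obj(\Cfrak)$ for every well-founded $G$ with $|G|<\kappa$. Then I would apply Lemma \ref{lem:permutation} to produce a directed $\L$-structure $\tilde M$ on the same underlying set as $M_0$, permutation equivalent to $M_0$, for which $\Arc(\tilde M)$ is exactly the directed graph path $p(0),p(1),\dots,p(r)$. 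Conditions (2) and (3) of Proposition \ref{th:main} then immediately yield $\alpha,\beta\in\Arc(\tilde M)$ and $\Arc(\tilde M)\cap(A\cup B\cup P)=\emptyset$, so that $(\tilde M,\alpha,\beta,A,B,P)$ is the desired $\L$-system whose arc graph is a directed path from $\alpha$ to $\beta$.

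The next step is to show that $G\star M_0$ and $G\star \tilde M$ are permutation equivalent as $\L$-structures for every well-founded $G$ of size $<\kappa$. Let $\sigma_{\bar x}$ be the permutation associated to each step $\bar x\in R^{M_0}$ by the permutation equivalence $M_0\sim \tilde M$. Since $\phi_{u,v}^{G,M_0}$ acts coordinate-wise, it commutes with coordinate permutations, giving
\[
\phi_{u,v}(\sigma_{\bar x}(\bar x))=\sigma_{\bar x}(\phi_{u,v}(\bar x)).
\]
Hence writing each $\bar y\in R^{G\star M_0}$ as $\phi_{u,v}(\bar x)$ with $\bar x\in R^{M_0}$ and setting $\sigma_{\bar y}:=\sigma_{\bar x}$ realises $R^{G\star \tilde M}$ as $\{\sigma_{\bar y}(\bar y):\bar y\in R^{G\star M_0}\}$. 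For this assignment to be well-defined the pair $((u,v),\bar x)$ representing $\bar y$ must be essentially unique. This follows from the fact that every relational tuple of $M_0$ is a step of the path, together with condition (3) of Proposition \ref{th:main}, which places the inner joints $p(1),\dots,p(r-1)$ in the ``free'' class (the canonical case $t=4$ from the proof of Proposition \ref{th:main}) and therefore forces $(u,v)$ to be recoverable from $\phi_{u,v}(\bar x)$; when $r=1$ the endpoints $\alpha$ and $\beta$ play this role directly.

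Finally, I would use the Axiom of Choice to fix, for each $N\in\obj(\Cfrak)$, an orientation $\tilde N$ permutation equivalent to $N$, subject to the constraint that $\widetilde{G\star M_0}=G\star \tilde M$ whenever $N=G\star M_0$ for some well-founded $G$ of size $<\kappa$. This is consistent because the underlying set of $G\star M_0$ depends on $G$ so distinct such graphs yield distinct objects, and the preceding paragraph establishes that $G\star\tilde M$ is permutation equivalent to $G\star M_0$. Let $\tilde \Cfrak$ be the full subcategory of $\StrL$ on $\{\tilde N:N\in\obj(\Cfrak)\}$; this is the required full orientation and by construction contains $G\star \tilde M$ for every well-founded $G$ of size $<\kappa$. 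The main obstacle I anticipate is the uniqueness-of-representation verification: the $\L$-path structure supplied by Proposition \ref{th:main} is exactly what is needed, but one must track how each joint is distributed among $A$, $B$, $P$ and the free class, and handle the degenerate case $r=1$ separately.
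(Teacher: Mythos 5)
Your proposal is correct and follows essentially the same route as the paper: apply \Cref{th:main}, pass to the permutation-equivalent directed structure $\tilde M$ via \Cref{lem:permutation}, use conditions (\ref{cond:2}) and (\ref{cond:3}) to see that $(\tilde M,\alpha,\beta,A,B,P)$ is an $\L$-system with $\Arc(\tilde M)$ the directed path, and then choose the full orientation so that it contains $G\star\tilde M$ for every well-founded $G$ of size $<\kappa$. The only difference is that you make explicit the verification that $G\star\tilde M$ is permutation equivalent to $G\star M$ (via componentwise commutation with $\phi_{u,v}$ and recoverability of the edge from the image of a joint), a step the paper leaves implicit in ``we may choose a full orientation''.
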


\begin{proof}
    Let $(M,\alpha,\beta,A,B,P)$ be the $\L$-gadget obtained by \Cref{th:main} and $p:r+1 \to M$ its path function. By \Cref{lem:permutation}, it follows that there is a directed $\L$-structure $\tilde{M}$ on the same domain as $M$ which is permutation equivalent to $M$ and $\Arc(\tilde M)$ is the directed graph path $p(0),p(1),\dots,p(n)$. It follows by Conditions \ref{cond:2} and \ref{cond:3} of \Cref{th:main} that $\alpha=p(0),\beta=p(1) \in \Arc(\tilde M)$, while $\Arc(\tilde{M})\cap(A\cup B\cup P)=\emptyset$. Hence, the $\L$-gadget $(\tilde M,\alpha,\beta,A,B,P)$ is in fact an $\L$-system. Fixing a well-founded graph $G$ of size $<\kappa$, the fact that $G \star M \in \obj(\Cfrak)$ implies that we may choose a full orientation $\tilde \Cfrak$ of $\C$ such that $G \star \tilde M \in \obj(\tilde \Cfrak)$. 
\end{proof}

Putting all the above together, we finally obtain the following.

\begin{theorem}\label{main}
    Let $\kappa$ be $\omega$ or an inaccessible cardinal and $\Cfrak$ a monotone subcategory of $\StrL$ such that $\Gaif(\Cfrak)$ is $\kappa$-somewhere dense and $|\L|<\kappa$. Then there is a full orientation $\tilde \Cfrak$ of $\Cfrak$ such that $\tilde \Cfrak$ is $\kappa$-algebraically universal. 
\end{theorem}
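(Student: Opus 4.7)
The plan is to apply \Cref{lem:uniform} to obtain a full orientation $\tilde\Cfrak$ of $\Cfrak$ and a finite $\L$-system $(M,\alpha,\beta,A,B,P)$ such that $G \star M \in \obj(\tilde\Cfrak)$ for every well-founded graph $G$ of size $<\kappa$, with $\Arc(M)$ a directed path from $\alpha$ to $\beta$ of some length $\ell \geq 1$. I then construct a full embedding into $\tilde\Cfrak$ via the arrow construction of \Cref{th:system}: the source is $\Gfrak$, a $\kappa$-universal full subcategory of $\Gra_{<\kappa}$ without isolated points (e.g.\ connected graphs with at least one edge), and the arrow is the finite $\L$-system $N := \Hcal \ostar M$, chosen so as to exploit the rigidity of subdivisions of $\Hcal$ from \Cref{lem:Hcal}.

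For the first hypothesis of \Cref{th:system}, I need $G \star N \in \obj(\tilde\Cfrak)$ for each $G \in \Gfrak$. By \Cref{stargadget}, $G \star N \iso (G \star \Hcal) \star M$. A short case analysis of descending chains in $G \star \Hcal$, using that the vertex $2$ of $\Hcal$ has no incoming edges and that $s$ and $t$ do not connect via directed paths in $\Hcal$, shows that $G \star \Hcal$ is well-founded, and clearly $|G \star \Hcal|<\kappa$. Thus \Cref{lem:uniform} places $(G \star \Hcal) \star M$ in $\obj(\tilde\Cfrak)$. The third hypothesis of \Cref{th:system} is automatic since $\tilde\Cfrak$ is a full subcategory of $\StrL$.

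The main obstacle is verifying the second hypothesis of \Cref{th:system}: that the only $\L$-homomorphisms $N \to G \star N$ are the canonical maps $\phi^{N,G}_{u,v}$. Here I exploit the faithfulness of the $\Arc$ functor (\Cref{arc}). Since $N$ is an $\L$-system and $G$ is a directed graph, \Cref{arcstar} and \Cref{lem:ostar} yield $\Arc(N) = \Hcal \star \Arc(M) \iso \Hcal^{(\ell-1)}$ and $\Arc(G \star N) \iso (G \star \Hcal) \star \Arc(M) \iso G \star \Hcal^{(\ell-1)}$. Any $\rho: N \to G \star N$ induces a directed graph homomorphism $\Arc(\rho): \Hcal^{(\ell-1)} \to G \star \Hcal^{(\ell-1)}$, which by \Cref{lem:Hcal} equals $\phi^{\Hcal^{(\ell-1)},G}_{u,v}$ for some $(u,v) \in E(G)$. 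Since $\Arc(\phi^{N,G}_{u,v}) = \phi^{\Arc(N),G}_{u,v} = \phi^{\Hcal^{(\ell-1)},G}_{u,v}$ by \Cref{arcstar}, faithfulness of $\Arc$ on directed $\L$-structures without isolated points forces $\rho = \phi^{N,G}_{u,v}$. \Cref{th:system} then supplies a full embedding $\Gfrak \to \tilde\Cfrak_{<\kappa}$, and $\kappa$-universality of $\Gfrak$ establishes $\tilde\Cfrak$ as $\kappa$-algebraically universal.
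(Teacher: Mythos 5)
Your overall strategy is the paper's: obtain $\tilde\Cfrak$ and the $\L$-system $(M,\alpha,\beta,A,B,P)$ from \Cref{lem:uniform}, use the arrow $\Hcal\ostar M$, verify condition (\ref{2}) of \Cref{th:system} by pushing a homomorphism through $\Arc$, invoking \Cref{lem:Hcal}, and recovering the morphism by faithfulness of $\Arc$. The gap is in your choice of source category. You take $\Gfrak$ to be an arbitrary $\kappa$-universal full subcategory of $\Gra_{<\kappa}$ without isolated points, e.g.\ connected graphs with at least one edge; but every tool in your key step is only available when the graphs $G$ are \emph{directed}: \Cref{directed} (which is what makes $\Arc(G\star N)$ meaningful at all), \Cref{arcstar}, \Cref{lem:ostar}, and the faithfulness statement \Cref{arc} are all stated and proved under that hypothesis --- indeed you write ``$G$ is a directed graph'' although connected graphs may have loops and symmetric edges. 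This is not just a citation issue. The intersection pattern of the glued copies used in the proof of \Cref{directed} already fails for a symmetric pair of edges, and for a graph with a loop the structure $G\star N$ can genuinely violate \Cref{defdirected}: if $\Arc(M)$ has length one and the single step of $M$ has a coordinate outside $\{\alpha,\beta\}\cup A\cup P$, the two steps of $\Hcal\ostar M$ lying over the $\Hcal$-edges $(2,s)$ and $(2,t)$ are glued over a loop $(u,u)$ to two \emph{distinct} tuples whose first two coordinates are the same ordered pair $((u,u,2),u)$, so $G\star N$ is not directed and $\Arc(G\star N)$ is undefined. Hence condition (\ref{2}) of \Cref{th:system} remains unverified for such $G$, and your full embedding is not established on all of $\Gfrak$.

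The repair is exactly what the paper does, and it is the reason \Cref{sec:wellfdd} exists: take $\Gfrak=\W\Gra_{<\kappa}$ (or its full subcategory of well-founded graphs without isolated points, which still receives the full embedding $G\mapsto G\star\Hcal$ from connected graphs and is therefore $\kappa$-universal). Well-founded graphs are directed, so \Cref{directed}, \Cref{arcstar}, \Cref{lem:ostar} and \Cref{arc} apply verbatim and your argument then coincides with the paper's proof. Your verification of condition (1), via \Cref{stargadget}, the isomorphism $G\star N\iso(G\star\Hcal)\star M$, and well-foundedness of $G\star\Hcal$ together with $|G\star\Hcal|<\kappa$, is correct and in fact makes explicit a point the paper only asserts in \Cref{hcal}; the treatment of condition (\ref{3}) by fullness of $\tilde\Cfrak$ is also fine.
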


\begin{proof}
    Let $\tilde \Cfrak$ be the full orientation of $\C$ and $(M,\alpha,\beta,A,B,P)$ the $\L$-system from \Cref{lem:uniform}. Fix the simple graph gadget $(\Hcal,s,t)$ from \Cref{hcal}. Observe that since $\Hcal$ is directed, $(\Hcal,s,t)$ is in fact a simple graph system. Define the $\L$-gadget $M_\Hcal = \Hcal \ostar M$. By \Cref{lem:ostar}, it follows that $M_\Hcal$ is an $\L$-system and moreover for every directed graph $G$:
\[ \Arc(G \star M_\Hcal) \iso (G\star H) \star \Arc(M).\]
By \Cref{lem:uniform}, $\Arc(M)$ is a directed graph path from $\alpha$ to $\beta$ and so 
\[ \Arc(G \star M_\Hcal) \iso (G \star H)^{(r)} \iso G \star H^{(r)}\]
for some $r \in \omega$. So, consider the functor 
    \[ \Phi:\W\Gra_{<\kappa} \to \tilde \Cfrak_{<\kappa}\]
    \[ G \mapsto G\star M_\Hcal, \quad f\mapsto f\star M_\Hcal.\]
 We use Theorem \ref{th:system} to argue that $\Phi$ is a full embedding. Indeed, fix $G \in \obj(\W\Gra_{<\kappa})$ and consider a homomorphism $f: M_\Hcal \to G \star M_\Hcal$ of $\L$-structures. This restricts to a homomorphism $\Arc(f):\Arc(M_\Hcal)\to \Arc(G\star M_\Hcal)$ of their corresponding arc graphs. We argue that $\Arc(f)=\Arc(\phi_{u,v}^{M_\Hcal,G})$ for some $(u,v) \in E(G)$. By \Cref{lem:Hcal}, the only homomorphisms from $\Hcal^{(r)}$ to $G \star H^{(r)}$ are the maps $\phi_{u,v}^{\Hcal^{(r)},G}$, and since $\Arc(M_\Hcal)\iso \Hcal^{(r)}$ and $G \star M_\Hcal \iso G \star H^{(r)}$ it follows that 
 \[ \Arc(f) = \phi_{u,v}^{\Arc(M_\Hcal),G} = \Arc(\phi_{u,v}^{M_\Hcal,G}),\]
the second equality holding by \Cref{arcstar}. Finally, by \Cref{arc} $\Arc$ is a faithful functor on directed $\L$-systems without isolated points, and since $M_\Hcal$ has no isolated points it follows that $f = \phi_{u,v}^{M_\Hcal,G}$. Since $\tilde \Cfrak$ is full and $\W\Gra_{<\kappa}$ is $\kappa$-algebraically universal, \Cref{th:system} implies that $\tilde \Cfrak$ is $\kappa$-algebraically universal.
 \end{proof}

 As with \Cref{th:smwdense}, we note that using the full Erd{\"o}s-Rado theorem (\Cref{erdos}) and \Cref{fact:pigeon} in place of Facts \ref{cor:erdos} and \ref{pigeonhole} respectively in \Cref{th:main}, we obtain the absolute version of \Cref{main}.

\begin{corollary}
    Let $\Cfrak$ be a monotone subcategory of $\StrL$ such that $\Gaif(\Cfrak)$ is totally somewhere dense. Then there is a full orientation $\tilde \Cfrak$ of $\Cfrak$ such that $\tilde \Cfrak$ is algebraically universal. 
\end{corollary}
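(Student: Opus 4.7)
The plan is to obtain the corollary as the class-sized analogue of \Cref{main}, by replaying the proofs of \Cref{th:main} and \Cref{lem:uniform} with the unbounded Ramsey tools in place of the inaccessible-bounded ones, exactly as the paragraph preceding the corollary indicates. In the absolute setting, $\L$ is a set, so the maximum arity of a symbol of $\L$ is a fixed cardinal and the set of isomorphism types of $\L$-paths of a fixed length has bounded cardinality; pruning a class-sized sequence $(N_\lambda)_{\lambda \in \Card}$ of witnesses to an unbounded subclass will play the role that pruning a $\kappa$-sequence to a $\kappa$-sized subset plays in the relativised proof.

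More concretely, I would first extract a single integer $r \in \omega$ witnessing total somewhere density: for each cardinal $\kappa$, total somewhere density supplies some $r_\kappa \in \omega$ such that $K^{r_\kappa}_\lambda \le \Gaif(N)$ for some $N \in \obj(\Cfrak)$ and all $\lambda < \kappa$, and \Cref{fact:pigeon} applied to the class function $\kappa \mapsto r_\kappa$ yields an unbounded subclass on which it is constant; this value of $r$ then works uniformly for every cardinal, since any $\lambda$ is below some $\kappa$ in the unbounded subclass. Fixing this $r$ and a class $(N_\lambda)_{\lambda \in \Card}$ of witnesses, I would run the proof of \Cref{th:main} line by line, using the full Erd{\"o}s--Rado theorem (\Cref{erdos}) instead of \Cref{cor:erdos} to shrink each $N_\lambda$ so that all of its subdivided edges have the same path type, and then \Cref{fact:pigeon} instead of \Cref{pigeonhole} on the set of path types to obtain a single path type $(M,p)$ valid on an unbounded subclass of $\Card$. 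A further $|H|$-fold application of the canonical Erd{\"o}s--Rado theorem combined with \Cref{lem:compatible}, followed by a last pigeonhole via \Cref{fact:pigeon} over the finite set $[4]^H$ of possible tuples of canonical-case labels, extracts the subsets $A, B, P$ uniformly. This produces a finite $\L$-gadget $(M,\alpha,\beta,A,B,P)$ as in \Cref{th:main} such that $\lambda \star M$ embeds injectively into $N_\lambda$ for all $\lambda$ in some unbounded subclass of $\Card$. Since any well-founded graph $G$ embeds injectively into the ordinal $|G|^+$, and the latter belongs to our unbounded subclass for unboundedly many choices, monotonicity of $\Cfrak$ yields $G \star M \in \obj(\Cfrak)$ for every well-founded graph $G$ of arbitrary size.

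With this gadget in hand, the argument of \Cref{lem:uniform} using \Cref{lem:permutation} supplies a full orientation $\tilde \Cfrak$ of $\Cfrak$ together with a directed $\L$-system $\tilde M$ whose arc graph is a directed path from $\alpha$ to $\beta$, with $G \star \tilde M \in \obj(\tilde\Cfrak)$ for every well-founded $G$. I would then set $M_\Hcal = \Hcal \ostar \tilde M$ and consider the functor $\Phi : \W\Gra \to \tilde \Cfrak$ given by $G \mapsto G \star M_\Hcal$; using \Cref{lem:ostar}, \Cref{arcstar}, \Cref{lem:Hcal}, and \Cref{arc} exactly as in the proof of \Cref{main}, this is a full embedding. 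Since $\W\Gra$ is algebraically universal, so is $\tilde \Cfrak$. The main obstacle is bookkeeping rather than conceptual: one must check that every application of \Cref{fact:pigeon} is legitimate in that the relevant class function takes values in a genuine set (the fixed-size set of path types, the finite set $[4]^H$, the countable set $\omega$), and that the finitely many successive prunings can be threaded together so that the final unbounded subclass simultaneously witnesses every reduction. Once these verifications are made, the proof requires no new ideas beyond those already present in \Cref{th:main} and \Cref{main}.
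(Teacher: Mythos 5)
Your proposal is correct and follows essentially the same route as the paper, whose own proof of this corollary is precisely the remark that one reruns \Cref{th:main} (and then \Cref{lem:uniform} and \Cref{main}) with the full Erd\H{o}s--Rado theorem and \Cref{fact:pigeon} in place of the bounded Facts \ref{cor:erdos} and \ref{pigeonhole}; your extraction of a single $r$ via \Cref{fact:pigeon} and the bookkeeping checks you flag are exactly the points that make the absolute version go through. The only nitpick is the phrase that ``$|G|^+$ belongs to our unbounded subclass'': what you actually need (and clearly intend) is to pick \emph{some} cardinal $\lambda>|G|$ in the unbounded subclass and embed $G$ into $(\lambda,\in)$ before applying monotonicity.
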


\section{Some Questions}

We conclude with a few questions which currently seem out of reach, but are nonetheless theoretically interesting. As we saw, there are algebraically universal categories that are not $\omega$-universal, such as the category of posets or the category of semigroups. Both examples are not model-theoretically tame (e.g.\ they are not NIP), but a priori there could possibly be algebraically universal categories which are tame in $\L_{\omega,\omega}$. This motivates the first question. 

\begin{question}
    Is there an algebraically universal category of $\L$-structures that is NIP or NIP$_2$? Clearly, this will not be $\omega$-universal.  
\end{question}

Moreover, all ``natural'' examples of categories considered here that are $\kappa$-universal for some uncountable $\kappa$, are in fact $\lambda$-universal for all uncountable $\lambda$. Although it is possible to construct artificial examples where this fails, it is sensible to inquire whether this holds for all such natural examples, e.g.\ for all varieties of universal algebras with homomorphisms. This would be an analogue of Morley's theorem \cite{morley} for universal categories.

\begin{question}
    Let $\Vcal$ be a variety of universal algebras in a countable similarity type. If $\Vcal$ is $\kappa$-universal for some uncountable $\kappa$, then is $\Vcal$ $\lambda$-universal for all uncountable $\lambda$? 
\end{question}

We also state this in relational form.

\begin{question}
    Let $\L$ be a countable relational language, $T$ an $\L$-theory, and $\mathbf{Mod}(T)$ the category of all models of $T$ with $\L$-homomorphisms. If $\mathbf{Mod}(T)$ is $\kappa$-universal for some uncountable $\kappa$, then is $\mathbf{Mod}(T)$ $\lambda$-universal for all uncountable $\lambda$? 
\end{question}

\section*{Acknowledgments}
The author is thankful to Tomáš Jakl for exposing him to the area, and for reading and discussing earlier versions of this manuscript. 

\bibliographystyle{plain}
\bibliography{bibliography}

\end{document}